\definecolor{gray}{rgb}{0.25, 0.25, 0.25}
\newtheorem{theorem}{Theorem}[section]
\newtheorem{lemma}[theorem]{Lemma}
\newtheorem{claim}[theorem]{Claim}
\newtheorem{cor}[theorem]{Corollary}
\newtheorem{conj}[theorem]{Conjecture}
\newtheorem*{observation*}{Observation}
\newtheorem{problem}[theorem]{Problem}
\newtheorem*{question*}{Question}
\newenvironment{definition*}
  {
   \innerdefinition}
  {\endinnerdefinition}
\theoremstyle{definition}
\theoremstyle{remark}
\newcommand{\mad}{{\mathrm{mad}}}
\title{Spectral extremal problems for non-bipartite graphs\\ without odd cycles}
\author{Lantao Zou,\quad 
Lihua Feng\thanks{
Corresponding authors. 
This paper was published on Discrete Mathematics 349 (2026) 114670. \\ 
Email addresses: \url{zoulantao123@163.com} (L. Zou), \url{fenglh@163.com} (L. Feng), 
\url{ytli0921@hnu.edu.cn} (Y. Li).},\quad
Yongtao Li$^*$ \\[3mm]
{\small School of Mathematics and Statistics, HNP-LAMA} \\ 
{\small Central South University, Changsha, Hunan, 410083, China}
}
\date{\today}
\begin{document}
\maketitle

\begin{abstract} 
A well-known result of Mantel asserts that every $n$-vertex triangle-free graph $G$ has at most $\lfloor n^2/4 \rfloor$ edges. Moreover, Erd\H{o}s proved that if $G$ is further non-bipartite, 
then $e(G)\le \lfloor {(n-1)^2}/{4}\rfloor +1$. 
Recently, Lin, Ning and Wu [Combin. Probab. Comput. 30 (2021)] established a spectral version by showing that 
if $G$ is a triangle-free non-bipartite graph on $n$ vertices, then $\lambda (G)\le \lambda (S_1(T_{n-1,2}))$, with equality if and only if  $G=S_1(T_{n-1,2})$, where $S_1(T_{n-1,2})$ is obtained from $T_{n-1,2}$ by subdividing an edge. 
In this paper, we investigate the maximum spectral radius of  a non-bipartite graph without some short odd cycles. Let $C_{2\ell +1}(T_{n-2\ell, 2})$ be the graph obtained by identifying a vertex of $C_{2\ell+1}$ and a vertex of the smaller partite set of $T_{n-2\ell ,2}$.  
We prove that for $1\le \ell < k$ and $n\ge 187k$, 
if $G$ is an $n$-vertex $\{C_3,\ldots ,C_{2\ell -1},C_{2k+1}\}$-free non-bipartite graph, 
then $\lambda (G)\le \lambda (C_{2\ell +1}(T_{n-2\ell, 2}))$, with equality if and only if $G=C_{2\ell +1}(T_{n-2\ell, 2})$. 
This result could be viewed as a spectral analogue of a min-degree result due to Yuan and Peng [European J. Combin. 127 (2025)]. Moreover,  our result extends a result of Guo, Lin and Zhao [Linear Algebra Appl. 627 (2021)] as well as a recent result of Zhang and Zhao [Discrete Math. 346 (2023)] since we can get rid of the condition that $n$ is sufficiently large. 
The argument in our proof is quite different and makes use of the classical spectral stability method and the double-eigenvector technique. The main innovation lies in a more clever argument that guarantees a subgraph to be bipartite after removing few vertices, which may be of independent interest. 
\end{abstract}

{{\bf Key words:}   Extremal graph theory; Stability; Odd cycle; Spectral radius. }

{{\bf 2010 Mathematics Subject Classification.}  05C35; 05C50.}

\section{Introduction} 

 Let $\mathcal{F}$ be a family of simple graphs. A graph ${G}$ is called $\mathcal{F}$-free if it does not contain any member of $\mathcal{F}$ as a subgraph. 
The Tur\'{a}n number of $\mathcal{F}$ is defined as the maximum number of edges in an  $n$-vertex $\mathcal{F}$-free graph and is denoted by $\mathrm{ex}(n,\mathcal{F})$. 
First of all, we review some fundamental results for triangle-free graphs. 
Let $T_{n,2}$ be the $n$-vertex bipartite Tur\'{a}n graph, which is a complete bipartite graph whose two parts have sizes as equal as possible. The classical Mantel  theorem (see, e.g., \cite{Bollobas78}) states that 
if $G$ is an $n$-vertex triangle-free graph, then $e(G)\le \lfloor {n^2}/{4}\rfloor$, with equality if and only if 
$G=T_{n,2}$. 
There are various extensions and generalizations on Mantel's theorem in the literature; 
see, e.g., \cite{BT1981,Bon1983}. 
The classical stability shows that if $G$ is an $n$-vertex triangle-free graph with $m= \lfloor {n^2}/{4}\rfloor -q$ edges, where $q\ge 0$, then 
 $G$ can be made bipartite by removing at most $q$ edges; see \cite{Fur2015}.  
Moreover, Erd\H{o}s (see \cite[p. 306]{BM2008}) presented a stability result by showing that if 
$G$ is triangle-free and  
 $e(G)> \lfloor {(n-1)^2}/{4}\rfloor +1$, 
 then $G$ is bipartite. 
 This threshold can be achieved by many unbalanced blow-ups of $C_5$. 
In addition, a variant was studied in terms of the minimum degree by 
Andr\'{a}sfai, Erd\H{o}s and S\'{o}s \cite{AES1974}, who proved that if $G$ is a triangle-free graph with $\delta (G)> {2n}/{5}$, 
then $G$ is bipartite. Moreover, the factor ${2}/{5}$ is the best possible as witnessed by the balanced blow-up of $C_5$. 
We refer the readers to \cite{Bra2003,Thom2002,Luc2006,Jin1995,GL2011} for related stability results involving the cliques and \cite{CJ2002,Thom2007} for short odd cycles.

\medskip 

In this paper, we focus on various stability results on graphs that forbid odd cycles. 

\subsection{Min-degree stability for odd cycles}

The degree stability for short odd cycles was  studied Andr\'{a}sfai, Erd\H{o}s and S\'{o}s \cite{AES1974}. 

\begin{theorem}[Andr\'{a}sfai--Erd\H{o}s--S\'{o}s \cite{AES1974}] \label{thm-AES}
Let $k\ge 2$ and $n\ge 2k+3$ be integers. 
If $G$ is a $\{C_3,C_5,\ldots ,C_{2k+1}\}$-free graph on $n$ vertices with   
\[ \delta (G) \ge \frac{2}{2k+3}n,\] 
then $G$ is bipartite, unless $G$ is the balanced blow-up of $C_{2k+3}$. 
\end{theorem}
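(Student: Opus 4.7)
The plan is to analyse $G$ via a shortest odd cycle. Assume $G$ is not bipartite, as otherwise there is nothing to prove. Since $G$ is $\{C_3, C_5, \ldots, C_{2k+1}\}$-free, every odd cycle of $G$ has length at least $2k+3$. Let $g$ denote the odd girth of $G$ and fix a shortest odd cycle $C = v_0 v_1 \cdots v_{g-1} v_0$. I will show that $g = 2k+3$ and that $G$ is the balanced blow-up of $C_g$.

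The central combinatorial step is the following observation: for any $u \in V(G)$ with two distinct neighbours $v_i, v_j$ on $C$, the cyclic distance between $i$ and $j$ along $C$ must be exactly $2$. Indeed, $v_i$ and $v_j$ split $C$ into two arcs of lengths $d$ and $g-d$ with $d \le (g-1)/2$, and adjoining the edges $u v_i, u v_j$ creates two cycles of lengths $d+2$ and $g-d+2$ whose sum $g+4$ is odd, so exactly one of them is odd and therefore of length at least $g$; a short parity case analysis forces $d = 2$. In particular the set of indices where $u$ has a neighbour on $C$ has size at most $2$ and, if of size $2$, is of the form $\{i-1, i+1\}$ for some $i$, yielding a canonical ``position'' $i$ for $u$.

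I would then define a partition $V(G) = V_0 \cup V_1 \cup \cdots \cup V_{g-1}$ by placing $v_j$ into $V_j$, placing a vertex $u$ having neighbours on $C$ into the class determined by its canonical position, and extending to vertices farther from $C$ iteratively by their distance to $V(C)$: the position of $u$ is dictated by any already-placed neighbour of $u$, and the odd-girth condition rules out conflicting answers from different neighbours. Two short arguments then yield that each $V_j$ is independent (an edge inside $V_j$ together with $C$ would produce an odd cycle shorter than $g$) and that edges only occur between $V_j$ and $V_{j \pm 1}$ mod $g$, so $G$ is a subgraph of the blow-up of $C_g$ with parts $V_0, \ldots, V_{g-1}$.

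Finally, the degree of any vertex in $V_j$ is at most $|V_{j-1}| + |V_{j+1}|$, so $\delta(G) \ge \tfrac{2n}{2k+3}$ gives $|V_{j-1}| + |V_{j+1}| \ge \tfrac{2n}{2k+3}$ for every $j$. Summing over $j$ yields $2n \ge \tfrac{2gn}{2k+3}$, so $g \le 2k+3$; combined with $g \ge 2k+3$ this forces $g = 2k+3$ and equality throughout. Since $\gcd(4, 2k+3) = 1$, the recurrence $|V_{j-1}| + |V_{j+1}| = \tfrac{2n}{2k+3}$ on the odd-length cycle forces all $|V_j|$ to be equal, so $G$ is the balanced blow-up of $C_{2k+3}$. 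The main obstacle in this plan is the consistent iterative assignment of vertices lying far from $C$ to partition classes: the odd-girth hypothesis is exactly what prevents conflicting assignments, and it is precisely when $G$ is the balanced blow-up of $C_{2k+3}$ that the minimum-degree inequality becomes tight, which accounts for the exceptional case in the statement.
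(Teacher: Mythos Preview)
The paper does not give its own proof of this theorem; it is quoted as background from \cite{AES1974}. So there is nothing in the paper to compare your argument against, and I will just assess the proposal on its own.

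Your central combinatorial step is correct and is the heart of the matter: any vertex has at most two neighbours on a shortest odd cycle $C$, and if two then they sit at cyclic distance exactly~$2$. The gap is in the next step, where you build the partition $V_0,\ldots,V_{g-1}$ by iteratively extending outward from $C$ and assert that ``the odd-girth condition rules out conflicting answers from different neighbours''. This is not true in general: a graph of odd girth $g$ need not admit any homomorphism to $C_g$ (equivalently, need not embed in a blow-up of $C_g$). Graphs of large girth and chromatic number at least~$4$, which exist by Erd\H{o}s' probabilistic construction, are concrete obstructions. In particular, a vertex with a single neighbour on $C$, or a vertex several steps away from $C$, can receive genuinely conflicting positions from different already-placed neighbours. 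Your final counting step then collapses, because it presupposes that the $V_j$ partition $V(G)$.

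The repair is to invoke the degree hypothesis \emph{before} defining the partition, not after. From your combinatorial step,
\[
\sum_{j=0}^{g-1} d_G(v_j)=\sum_{u\in V(G)}|N(u)\cap V(C)|\le 2n,
\]
while $\sum_j d_G(v_j)\ge g\,\delta(G)\ge \tfrac{2gn}{2k+3}$. Hence $g\le 2k+3$, so $g=2k+3$, and equality throughout forces every vertex of $G$ to have \emph{exactly} two neighbours on $C$. Now the classes $V_j:=\{u:N(u)\cap V(C)=\{v_{j-1},v_{j+1}\}\}$ are well-defined for every vertex simultaneously, with no iterative extension needed; your arguments that each $V_j$ is independent, that edges go only between consecutive classes (replace $v_l$ by $u'$ in $C$ to get another shortest odd cycle and reapply the two-neighbour bound), and that the parts are equal and the blow-up complete then go through cleanly.
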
 

Focusing on forbidding a single odd cycle, 
H\"{a}ggkvist \cite{Hagg1982} showed that for each $k\in \{2,3,4\}$, if $G$ is an $n$-vertex $C_{2k+1}$-free graph with $\delta (G)> \frac{2}{2k+3}n$, then $G$ is bipartite. 
In addition, H\"{a}ggkvist \cite{Hagg1982} remarked that 
this result cannot be extended to $k\ge 5$. 
In 2024, Yuan and Peng \cite{YP2024} 
established the min-degree stability 
for a single odd cycle $C_{2k+1}$ with $k\ge 5$.  

\begin{theorem}[Yuan--Peng \cite{YP2024}] 
\label{thm-YP2024}
Let $k\ge 5$ and $n\ge 21000k$ be integers. 
If $G$ is a $C_{2k+1}$-free graph on $n$ vertices with 
\[  \delta (G) \ge \frac{n}{6}, \] 
then $G$ is bipartite, unless  $G$ is the 
H\"{a}ggkvist construction: take three vertex-disjoint copies of 
$K_{\frac{n}{6},\frac{n}{6}}$, select a vertex from each of them 
and put a triangle in the three vertices.  
\end{theorem}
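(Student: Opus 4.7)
The plan is to show that any non-bipartite $C_{2k+1}$-free graph $G$ on $n\ge 21000k$ vertices with $\delta(G)\ge n/6$ must contain a triangle, and then to pin down its global structure so that the H\"{a}ggkvist graph emerges as the unique possibility. Let $2\ell+1$ be the length of a shortest odd cycle $C=v_0v_1\cdots v_{2\ell}$ in $G$. First I would rule out every $\ell\in\{2,3,\ldots,k-1\}$ by imitating the Andr\'{a}sfai--Erd\H{o}s--S\'{o}s iteration: by minimality of $C$, no chord exists, and for any $w\notin V(C)$ the neighbours $N(w)\cap V(C)$ must sit at pairwise even distances along $C$ (otherwise a shorter odd closed walk, hence a shorter odd cycle, appears). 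Combined with $\delta(G)\ge n/6$, this constraint allows an iterative classification of the vertices of $G$ into $2\ell+1$ classes realising a graph homomorphism $G\to C_{2\ell+1}$; that is, $G$ is an (unbalanced) blow-up of $C_{2\ell+1}$. For $\ell\ge 2$, by concatenating paths through consecutive classes one produces arbitrarily long odd walks, and in particular a $C_{2k+1}$ subgraph, contradicting the hypothesis. Hence $\ell=1$ and $G$ contains a triangle.

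Next I fix a triangle $T=\{x,y,z\}$ in $G$ and partition $V(G)\setminus T$ into classes $V_S:=\{v:N(v)\cap T=S\}$ for $S\subseteq T$. Using $C_{2k+1}$-freeness together with $k\ge 5$, any sufficiently long path between two triangle vertices whose interior lies in $V\setminus T$ completes an odd cycle of length exactly $2k+1$ through the third triangle vertex; this forces the mixed classes $V_{\{x,y\}},V_{\{y,z\}},V_{\{x,z\}},V_T$ to have very restricted neighbourhoods, and a counting argument against $\delta(G)\ge n/6$ shows each of them is empty. With only the three pure classes $V_{\{x\}},V_{\{y\}},V_{\{z\}}$ remaining, each $V_{\{x\}}$ together with its partner set $B_x\subseteq V(G)\setminus T$ of vertices nonadjacent to $x$ is pressured by $\delta(G)\ge n/6$ and the equality $|V_{\{x\}}|+|V_{\{y\}}|+|V_{\{z\}}|=n-3$ to span a balanced $K_{n/6,n/6}$; any edge between two distinct bipartite pieces, or any chord inside one, would close a forbidden $C_{2k+1}$ via a short walk through the triangle. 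Gluing the three pieces along the triangle $T$ then gives exactly the H\"{a}ggkvist construction.

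The main obstacle is the first step of simultaneously excluding every intermediate odd girth $5,7,\ldots,2k-1$. The Andr\'{a}sfai blow-up one obtains is in general unbalanced, so one must verify that every class has size linear in $n$ (or at least polynomial in $k$) to support a path long enough to realise a $C_{2k+1}$. This is precisely where the explicit bound $n\ge 21000k$ enters, and a delicate walk-counting and class-size-balancing argument seems essentially unavoidable; the subsequent triangle analysis is comparatively direct, although matching the tight constant $1/6$ rather than some slightly larger fraction requires care in ruling out the mixed classes.
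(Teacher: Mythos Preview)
This theorem is not proved in the present paper at all: it is quoted from Yuan and Peng \cite{YP2024} as background motivation in the introduction, and no argument for it appears anywhere in the text. The paper's own work concerns the spectral analogues (Theorems~\ref{thm-odd-bi-stable} and~\ref{thm-non-bi-cycles}), and the proof section treats only those. So there is no ``paper's own proof'' of Theorem~\ref{thm-YP2024} against which to compare your proposal.

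As for the proposal on its own terms, the outline has real gaps. In Step~1 you assert that an Andr\'asfai--Erd\H{o}s--S\'os iteration forces $G$ to be a blow-up of $C_{2\ell+1}$, but the AES mechanism requires $\delta(G)\ge 2n/(2\ell+1)$ to produce that homomorphism, and for small $\ell$ (namely $\ell\in\{2,3,4,5\}$) the hypothesis $\delta(G)\ge n/6$ is strictly weaker; you cannot conclude a global blow-up structure from the chord-free property of a single shortest odd cycle plus $n/6$. Indeed, the extremal H\"aggkvist graph itself is not a blow-up of $C_3$, so the ``$G$ is a blow-up of $C_{2\ell+1}$'' template is the wrong shape even in the case you do want to land in. For Step~2, the assertion that the mixed classes $V_{\{x,y\}}$, $V_{\{x,z\}}$, $V_{\{y,z\}}$, $V_T$ are empty is exactly where the heart of the matter lies: a vertex adjacent to two triangle vertices does not obviously create a $C_{2k+1}$, and the ``counting argument against $\delta(G)\ge n/6$'' you invoke is not spelled out and is not routine. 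The actual Yuan--Peng argument is substantially more delicate than this sketch suggests; if you want to reconstruct it, you should consult \cite{YP2024} directly rather than the present paper.
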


Observe that the extremal graph constructed  in \Cref{thm-YP2024} contains a triangle. 
It is natural to consider the second optimal extremal graph further by forbidding a triangle. More generally, Yuan and Peng \cite{YP2023}  proved the further stability for $ C_{2k+1}$-free
graphs by excluding all short odd cycles with lengths at most $2\ell -1$, where $ \ell \le k$ is a positive integer. 

\begin{theorem}[Yuan--Peng \cite{YP2023}]\label{thm-YP-many-cycles}
Let $2\le \ell\le k, n\ge 1000k^{8}$ be integers and let $G$ be a $\{C_3,C_5,\dots , C_{2\ell-1},C_{2k+1}\}$-free graph on $n$ vertices.
\begin{itemize}

\item[\rm (i)]
If $ \ell>\frac{2k-1}{8}$ and $\delta(G)\ge \frac{2}{2k+3}n$, then $G$ is bipartite, or $G$ is a balanced blow-up of $C_{2k+3}$.

\item[\rm (ii)]
If $2 \le \ell<\frac{2k-1}{8}$ and $\delta(G)\ge \frac{n}{2(2\ell+1)}$, then $G$ is bipartite, or $G$ is a graph obtained by taking $2\ell+1$ vertex-disjoint copies of $K_{\frac{n}{2(2\ell+1)},\frac{n}{2(2\ell+1)}}$ and selecting a vertex in each of them such that these vertices form a cycle of length $2\ell+1$.
\end{itemize}
\end{theorem}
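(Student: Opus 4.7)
I would follow the Andr\'asfai--Erd\H{o}s--S\'os stability paradigm, adapted to the fact that only the single long odd cycle $C_{2k+1}$ is forbidden (together with the short ones $C_3,\ldots,C_{2\ell-1}$). Assume $G$ is non-bipartite; the odd girth then equals some $2s+1$ with $s\ge\ell$ and $s\ne k$. The plan is three-pronged: first, pin down the admissible values of $s$; second, fix a shortest odd cycle $C$ as a backbone and construct a cyclic $(2s+1)$-colouring of $V(G)$; third, read off the extremal structure via the min-degree bound and the $C_{2k+1}$-freeness.

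\textbf{Pinning $s$ and constructing the colouring.} BFS from any vertex combined with the min-degree hypothesis shows that the layer sizes grow at a rate determined by $\delta(G)$; the absence of short odd cycles forces the induced structure to be locally bipartite up to depth $s-1$, and Moore-type counting rules out values of $s$ strictly between $\ell$ and $k$. Hence $s\in\{\ell,k+1\}$ in case (i) and $s=\ell$ in case (ii). If $s=k+1$, then $G$ has odd girth $\ge 2k+3$ and so is $\{C_3,\ldots,C_{2k+1}\}$-free, and \Cref{thm-AES} applied to the parameter $k$ delivers the balanced blow-up of $C_{2k+3}$, completing case (i) in that subcase. Now fix a shortest odd cycle $C=v_0v_1\cdots v_{2s}$. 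For any $u\in V(G)$, the odd-girth constraint together with the $C_{2k+1}$-freeness forces $N(u)\cap V(C)$ to concentrate on a single cyclic class of $C$ (else two chords from $u$ build either a shorter odd cycle or a $C_{2k+1}$). Using $\delta(G)$, this extends to a global colouring $c:V(G)\to\mathbb{Z}/(2s+1)\mathbb{Z}$ with $c(v_i)=i$; each class $V_j=c^{-1}(j)$ is ``nearly independent'' and edges are concentrated between consecutive classes in $\mathbb{Z}/(2s+1)\mathbb{Z}$, so that $G$ admits an approximate homomorphism onto $C_{2s+1}$.

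\textbf{Finishing and main obstacle.} In case (i) with the remaining subcase $s=\ell$, the bound $\delta(G)\ge 2n/(2k+3)$ forces each $V_j$ to be independent and of equal size, producing a balanced blow-up of $C_{2\ell+1}$; one then checks that the inequality $\ell>(2k-1)/8$ makes this blow-up violate $\delta(G)\ge 2n/(2k+3)$, so this subcase is empty and only the blow-up of $C_{2k+3}$ remains. In case (ii), the weaker $\delta(G)\ge n/(2(2\ell+1))$ allows internal edges inside each $V_j$, and the task is to prove each $V_j$ is exactly $K_{r,r}$ with $r=n/(2(2\ell+1))$ and that the backbone $C_{2\ell+1}$ passes through exactly one vertex per blob, giving precisely the H\"aggkvist-type graph. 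This is where I expect the technical heart of the proof to lie: one must rule out (a) additional internal edges that break $K_{r,r}$, (b) stray cross-blob edges outside the backbone, and (c) long detours through several blobs that would create a $C_{2k+1}$. Each rules out a specific configuration via a counting argument using $\delta(G)$, the odd-girth bound $2\ell+1$, and the slack provided by $n\ge 1000k^8$; the delicate bookkeeping is aligning the parity offsets so that the $C_{2k+1}$-ban can be invoked against each bad configuration cleanly.
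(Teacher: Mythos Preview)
This theorem is not proved in the present paper: it is quoted from Yuan and Peng \cite{YP2023} as motivation (see the acknowledgements, where the authors thank Yuan and Peng for sharing the reference). Consequently there is no ``paper's own proof'' to compare your proposal against.

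On the merits of your sketch, a couple of the steps are not right as stated. First, your assertion that ``Moore-type counting rules out values of $s$ strictly between $\ell$ and $k$'' is too strong: nothing in the hypotheses prevents the odd girth from being, say, $2\ell+3$ when $\ell$ is small relative to $k$; the actual argument in \cite{YP2023} has to handle a range of possible odd girths and does not reduce to $s\in\{\ell,k+1\}$ by a simple growth estimate. Second, in case (i) with $s=\ell$, your reason for discarding the balanced blow-up of $C_{2\ell+1}$ is inverted: such a blow-up has minimum degree $2n/(2\ell+1)\ge 2n/(2k+1)>2n/(2k+3)$, so it \emph{does} meet the degree hypothesis; it is excluded because it contains $C_{2k+1}$ (a blow-up of $C_{2\ell+1}$ with large parts contains odd cycles of all lengths $\ge 2\ell+1$), not because of the degree bound. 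Finally, the construction of a global $(2s+1)$-colouring from local concentration of $N(u)\cap V(C)$ is the genuinely hard part and needs more than ``using $\delta(G)$''; in the Yuan--Peng setting one has to work with homomorphisms to $C_{2s+1}$ and carefully rule out chords and long detours, which is exactly where the polynomial bound $n\ge 1000k^8$ is consumed.
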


\subsection{Spectral stability for odd cycles}

In recent years, there has been significant growth in the field of spectral extremal graph theory, which is an advanced and useful area of extremal combinatorics that combines graph theory with algebraic methods, particularly through the use of eigenvalues of associated matrices to study the combinatorial structures of graphs. This field has gained great popularity in the last few years due to its potential applications in various domains such as theoretical computer science, combinatorial geometry, and,  more broadly, in the study of complex networks. 

The {\it adjacency matrix} of a graph $G$ is defined as $A(G)=(a_{ij})_{n\times n}$ with $a_{ij}=1$ if $ij\in E(G)$, and $a_{ij}=0$ otherwise. 
The {\it spectral radius} of $G$, denoted by $\lambda (G)$, is the maximum modulus of eigenvalues of $A(G)$. 
The well-known spectral Mantel theorem \cite{Niki2007laa2} asserts that if $G$ is a triangle-free graph on $n$ vertices, then $\lambda (G)\le \lambda (T_{n,2})$, and the equality holds if and only if $G=T_{n,2}$. 
In 2021, 
Lin, Ning and Wu \cite{LNW2021} 
determined the maximum spectral radius of non-bipartite  triangle-free graphs. 
Let $S_1(T_{n-1,2})$ be obtained from 
$T_{n-1,2}$ by subdividing an edge.
It was shown in \cite{LNW2021} that
if $G$ is a triangle-free graph with  
$ \lambda(G)\geq \lambda(S_1(T_{n-1,2}))$, 
    then $G$ is bipartite, unless $G=  S_1(T_{n-1,2})$. 
This result initiated the study on spectral  problems for non-bipartite graphs.

Lin, Ning and Wu \cite{LNW2021} (see \cite{CJ2002} independently) extended  the problem by forbidding short odd cycles and proved that if 
$G$ is a $\{C_3,C_5,\ldots,C_{2k+1}\}$-free graph on $n\ge 2k+3$ vertices with 
\begin{equation} \label{eq-3-5-2k+1} 
e(G) > \left\lfloor \frac{(n-2k+1)^2}{4} \right\rfloor  +2k-1, 
\end{equation}
then $G$ is bipartite. 
Furthermore, they proposed a more
general question to establish the corresponding spectral version of (\ref{eq-3-5-2k+1}).  
Soon after, this question was solved by 
Lin and Guo \cite{LG2021} and 
Li, Sun and Yu \cite{LSY2022} independently.  
Let $S_{2k-1}(T_{n-2k+1,2})$ be the graph obtained by replacing an edge of the bipartite Tur\'{a}n graph $T_{n-2k+1,2}$ with a path $P_{2k+1}$ of order $2k+1$; see Figure \ref{Fig-Extremal}.

\begin{theorem}[Lin--Guo \cite{LG2021}; Li--Sun--Yu \cite{LSY2022}]
\label{thm-LG-LSY}
    Let $k\ge 2$ and $n\ge 2k+9$ be integers. If $G$ is a $\{C_3,C_5,\ldots,C_{2k+1}\}$-free graph on $n$ vertices with  
    $$
    \lambda(G)\geq \lambda(S_{2k-1}(T_{n-2k+1,2})),
    $$
    then $G$ is bipartite, unless $G=  S_{2k-1}(T_{n-2k+1,2})$.
\end{theorem}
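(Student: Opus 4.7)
The plan is to combine the non-spectral edge-stability in \eqref{eq-3-5-2k+1} with Nosal's theorem and a Perron-eigenvector analysis, finishing via an ``edge-shifting'' Perron--Frobenius comparison. Write $\lambda_0 := \lambda(S_{2k-1}(T_{n-2k+1,2}))$, suppose for contradiction that $G$ is a non-bipartite $\{C_3,\dots,C_{2k+1}\}$-free graph on $n$ vertices with $\lambda(G) \ge \lambda_0$ but $G \neq S_{2k-1}(T_{n-2k+1,2})$, and let $x$ be the unit Perron eigenvector of $G$, with $x_{u^*} = \max_v x_v$.

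First I would pin down the edge count. Plugging the two-valued test vector that puts mass $1/\sqrt{|A|}$ on the side $A$ of the bipartite core $T_{n-2k+1,2}$ of $S_{2k-1}(T_{n-2k+1,2})$, mass $1/\sqrt{|B|}$ on $B$, and mass $0$ on the subdivision path into the Rayleigh quotient yields $\lambda_0 \ge \sqrt{\lfloor(n-2k+1)^2/4\rfloor} - O(n^{-1/2})$. Since $G$ is triangle-free, Nosal's theorem gives $\lambda(G)^2 \le e(G)$, so $e(G) \ge \lambda_0^2 \ge \lfloor(n-2k+1)^2/4\rfloor - O(1)$. In the other direction, the non-bipartite edge bound \eqref{eq-3-5-2k+1} forces $e(G) \le \lfloor (n-2k+1)^2/4\rfloor + 2k - 1$. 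The edge count of $G$ is therefore trapped in a window of width $O(k)$.

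Next, I would extract a bipartite core. A degree-based partition separates $V(G)$ into the ``high-degree'' set $V' := \{v : \deg(v) \ge (n-2k)/2 - O(\sqrt{k})\}$ and the remainder $R = V(G)\setminus V'$, showing $|R| \le 2k-1$ (by a double-counting argument that would contradict the lower bound on $e(G)$ if $|R|$ were larger) and that $G[V']$ is bipartite with parts $A', B'$ of nearly equal size, differing from $T_{|V'|,2}$ by $O(k)$ edges. Iterating the eigenvalue equation $\lambda x_u = \sum_{v\sim u} x_v$ twice and applying the forbidden-cycle list, one further shows that the Perron weight concentrates on $V'$ (with $x_v \le \alpha\, x_{u^*}$ for every $v \in R$ and a suitable small $\alpha$) and that the odd cycles of $G$---which exist because $G$ is non-bipartite, and have length $\ge 2k+3$ by the odd-girth hypothesis---are carried jointly by $R$ and at most $O(k)$ exceptional edges inside $V'$.

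Finally, I would identify the non-bipartite part exactly via the standard ``edge-shifting'' Perron--Frobenius argument: if $uv \in E(G)$, $u'v \notin E(G)$, and $x_{u'} \ge x_u$, then replacing $uv$ by $u'v$ weakly increases $\lambda(G)$, strictly unless the resulting graphs are isomorphic. Repeatedly applying this---while monitoring that no forbidden short odd cycle is created---pushes every non-core edge to its extremal position, which for an odd-parity connection between the two sides $A',B'$ of the bipartite core is precisely a length-$2k$ subdivision of a single $A'B'$-edge. This forces $G = S_{2k-1}(T_{n-2k+1,2})$, a contradiction. The hard part will be this last step: ruling out competing non-bipartite configurations of size $\approx 2k$, such as a $C_{2k+3}$ glued at a single vertex to the bipartite core, or a length-$2k$ path with both endpoints in the same side $A'$. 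Each of these respects the forbidden-subgraph list, so the combinatorics alone do not eliminate them; one must carry out a direct Perron--Frobenius comparison of spectral radii (the ``double-eigenvector'' technique flagged in the abstract), and quantifying this cleanly is what forces the threshold $n \ge 2k+9$.
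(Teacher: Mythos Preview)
The paper does not prove Theorem~\ref{thm-LG-LSY}; it quotes the result from \cite{LG2021,LSY2022} and then \emph{uses} it to dispose of part~(a) of the main Theorem~\ref{thm-odd-bi-stable} (see the remark following that theorem: ``In view of Theorems~\ref{thm-LG-LSY} and~\ref{thm-non-bi-C2k+1}, we know that part~(a) in Theorem~\ref{thm-odd-bi-stable} holds for $n\ge 2k+9$''). The argument in Section~4 is carried out only for $1\le \ell\le k-1$, and in fact Lemma~\ref{lem-cycle-length} already invokes Theorem~\ref{thm-LG-LSY} as a black box. So there is no in-paper proof to compare your proposal against.

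That said, your sketch is structurally close to how the paper handles the neighboring case $\ell\le k-1$: a Nosal-type lower bound on $e(G)$ (Lemma~\ref{lem-graph-size}), a small low-degree set (Lemma~\ref{lem-bound-L}), bipartiteness of the complement of that set (Lemma~\ref{V_1V_2}), and a Perron-eigenvector/edge-shifting endgame (Lemmas~\ref{lem-L-2k}--\ref{lem-balanced}). Two places where your outline is thin compared to what is actually needed: (i) the assertion that $G[V']$ is bipartite is stated without a mechanism --- in the paper this is the step that requires the weakly-pancyclic Lemma~\ref{weakly-pancyclic} together with Erd\H{o}s--Gallai, and it is not automatic from the edge count alone; (ii) the ``hard part'' you flag is genuinely where the work lies. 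For $\ell=k$ the shortest odd cycle has length at least $2k+3$, so the competitors you list (a pendant $C_{2k+3}$, or a length-$2k$ path with both ends in the same part) all respect the forbidden list, and eliminating them needs an explicit spectral comparison that you have not supplied. Finally, note that the tight threshold $n\ge 2k+9$ comes from the detailed analysis in \cite{LG2021,LSY2022}; the stability-plus-eigenvector scheme used here only delivers $n\ge 187k$, so your final sentence overstates what the method you describe would actually give.
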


Nikiforov \cite{Niki2008} studied the spectral extremal problem for $C_{2k+1}$-free graphs and 
proved that if $k\ge 2$ and $n\ge 320(2k+1)$ is sufficiently large, 
and $G$ is a $C_{2k+1}$-free graph of order $n$, then 
$\lambda (G)\le \lambda (T_{n,2})$. 
We refer to \cite{ZL2022jgt,LN2023,Zhang-wq-2024} for recent generalizations.  
 Correspondingly, 
it is interesting to establish a spectral stability for Nikiforov's result, i.e., weakening the condition in Theorem \ref{thm-LG-LSY} by forbidding a single odd cycle $C_{2k+1}$. 
The case $k=1$ reduces to the aforementioned result of Lin, Ning and Wu \cite{LNW2021}; 
the case $k=2$ was proved by Guo, Lin and Zhao \cite{GLZ2021} for $n\ge 21$; the general case for $k\ge 3$ was recently studied by Zhang and Zhao \cite{ZZ2023} for sufficiently large $n$. 
To begin with, 
we denote by $C_{2\ell+1}(T_{n-2\ell ,2})$ the graph obtained by identifying a vertex of $C_{2\ell +1}$ and a vertex 
of $T_{n-2\ell ,2}$ belonging to the vertex part with {\bf smaller} size; 
see Figure \ref{Fig-Extremal}. 

 \begin{figure}[H]
\centering
\includegraphics[scale=0.8]{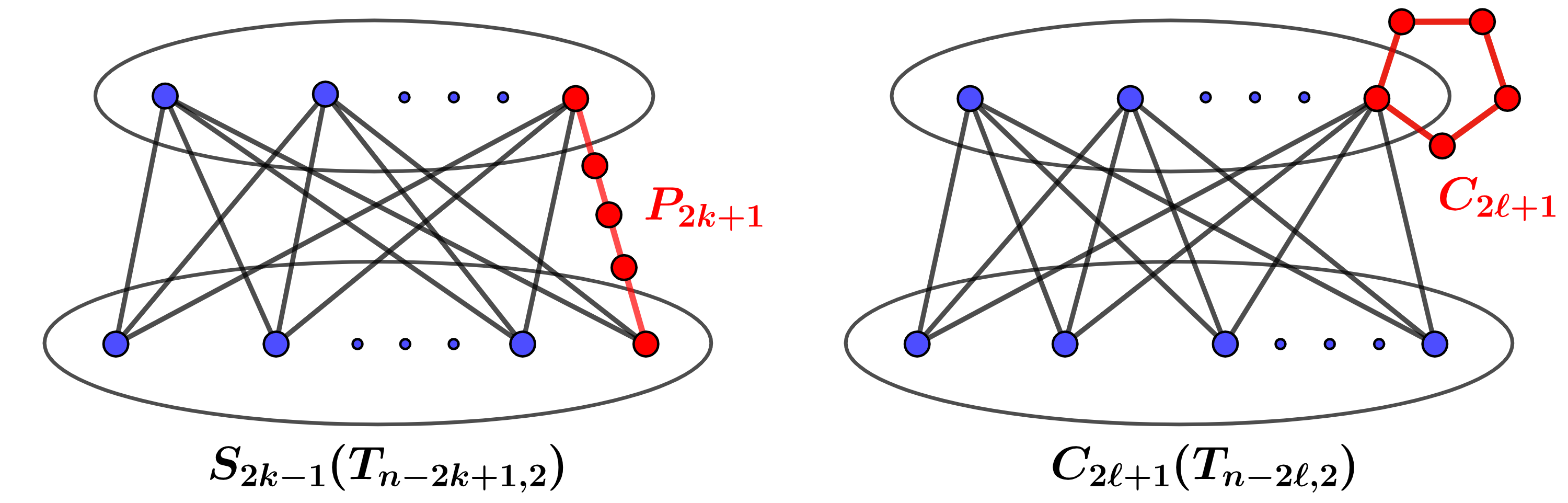} 
\caption{The extremal graphs in Theorem \ref{thm-odd-bi-stable}.}
\label{Fig-Extremal}
\end{figure}

\begin{theorem}[Guo--Lin--Zhao  \cite{GLZ2021}; Zhang--Zhao \cite{ZZ2023}]
\label{thm-non-bi-C2k+1} 
     Let $k\geq 2$ and $n$ be  sufficiently large with respect to $k$ ($n\ge 21$ if $k=2$). 
If $G$ is a $C_{2k+1}$-free graph on $n$ vertices with  
     $$
     \lambda(G)\geq \lambda(C_{3}(T_{n-2,2})),
     $$
     then $G$ is bipartite, unless $G=  C_{3}(T_{n-2,2})$.
\end{theorem}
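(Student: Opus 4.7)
My plan follows the classical spectral stability framework for non-bipartite extremal problems. The starting point is the two-sided estimate $\lambda(G) \ge \lambda(C_3(T_{n-2,2})) \ge (n-2)/2$ against Nikiforov's spectral Tur\'{a}n theorem, which gives $\lambda(G) \le n/2 + o(1)$ for $C_{2k+1}$-free $G$ when $n$ is large. This pins $\lambda(G)$ within $O(1)$ of $n/2$, and a stability form of Nikiforov's theorem (or a direct eigenvector argument) produces a bipartition $V(G) = A \cup B$ with $|A|, |B| = n/2 \pm O(1)$ and with both $e(A) + e(B)$ and the missing cross-edges of size $o(n^2)$. I would then refine this partition using the normalised Perron eigenvector $\mathbf{x}$: defining $W$ as the set of ``exceptional'' vertices (those with small $x_v$ or too few neighbours in the opposite part), a short counting argument bounds $|W|$ by a constant depending only on $k$, and each vertex of $V(G) \setminus W$ is shown to send all but $o(n)$ of its edges into the opposite part.

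The technical heart, and the main obstacle, is to prove that $G - W$ is bipartite. Since $G$ is only $C_{2k+1}$-free, shorter odd cycles are not forbidden \emph{a priori}, so bipartiteness of $G - W$ cannot be inferred directly. The idea is a path-extension argument: if $G - W$ contained an odd cycle of length $2j+1$ with $j < k$, then because each of its vertices has $\Omega(n)$ neighbours in both sides of the bipartition within $G - W$, one can lengthen the cycle through alternating paths in the almost-complete bipartite part by $2(k-j)$ edges to produce a $C_{2k+1}$ in $G$, a contradiction. Carrying out this construction cleanly and verifying that each admissible odd length up to $2k+1$ can indeed be realised is the delicate step; the abstract's mention of ``a more clever argument that guarantees a subgraph to be bipartite after removing few vertices'' suggests that the paper's innovation lies precisely here.

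Finally, once $G - W$ is bipartite, a double-eigenvector comparison between the Perron vectors of $G$ and $C_3(T_{n-2,2})$ forces $|W| \le 1$, since additional exceptional vertices would strictly decrease $\lambda$. Non-bipartiteness of $G$ then gives $|W| = 1$, and the unique exceptional vertex together with the bipartite core must produce exactly one odd cycle, which the same $C_{2k+1}$-avoidance argument forces to be a triangle. A short optimisation using the Perron weights shows that this triangle must be attached at the smaller side of $T_{n-2,2}$, yielding $G = C_3(T_{n-2,2})$.
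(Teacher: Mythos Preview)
Your overall architecture---edge/spectral stability, isolate a small exceptional set, prove the remainder bipartite, then pin down the structure via eigenvector comparisons---matches the paper's. But the endgame contains a genuine error: in the target graph $C_3(T_{n-2,2})$ the triangle has \emph{two} vertices of degree $2$, and both carry Perron weight $O(1/n)$, so by your own definition of $W$ (small $x_v$ or few cross-neighbours) one has $|W|=2$, not $|W|\le 1$. A single exceptional vertex cannot produce the pendant-triangle structure: adding one vertex $w$ to a bipartite graph $H$ gives a non-bipartite graph only if $w$ has neighbours on both sides of $H$, and then every odd cycle through $w$ uses two high-degree vertices of $H$---this is not what $C_3(T_{n-2,2})$ looks like. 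The paper avoids this by never trying to force $|W|\le 1$. Instead it fixes a shortest odd cycle $C=u_1\cdots u_{2t+1}$ (with $1\le t\le k-1$), shows that the low-degree set $L$ satisfies $L\subseteq V(C)$, proves via path-building that $u_2,\ldots,u_{2t+1}$ have \emph{no} neighbours outside $C$, then uses the double-eigenvector technique only to balance the two sides of the complete bipartite remainder, and finally invokes a subdivision monotonicity lemma ($\lambda(C_{2t+1}(T_{n-2t,2}))<\lambda(C_3(T_{n-2,2}))$ for $t>1$) to force $t=1$.

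For the bipartiteness of $G-W$ your path-extension idea is reasonable but the sketch is imprecise: vertices of $G-W$ have $\Omega(n)$ neighbours only in the \emph{opposite} part, not in both, so the extension must replace an in-part edge of a short odd cycle by a longer alternating detour; and you have not addressed odd cycles in $G-W$ of length greater than $2k+1$. The paper sidesteps all of this with a single citation: since $\delta(G-L)>\tfrac{n+2}{3}$, the Brandt--Faudree--Goddard weak pancyclicity theorem says that if $G-L$ were non-bipartite it would contain every cycle length between its girth (at most $4$) and its circumference (more than $2k+1$ by Erd\H{o}s--Gallai), in particular $C_{2k+1}$. This replaces the entire extension argument and is precisely the ``more clever argument'' alluded to in the abstract; it is also what allows the paper to obtain the explicit linear bound $n\ge 187k$ rather than merely ``sufficiently large $n$''.
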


 Li, Sun and Wei \cite{LSW2024} 
and Fang and Lin \cite{FL2024+}
extended this result to $\theta_{1,q,r}$-free graphs for even $q$, where $\theta_{1,q,r}$ is the graph obtained from internally vertex-disjoint paths of lengths $1,q,r$ by sharing a common pair of endpoints.  Clearly, we see that $C_{q+1}$ is a proper subgraph of $\theta_{1,q,r}$.

\section{Main results}

In this paper, we prove the following result, which extends Theorem \ref{thm-non-bi-C2k+1}.

\begin{theorem}[Main result] 
\label{thm-odd-bi-stable}
    Let $1\leq \ell\leq k$ and $G$ be a  $\{C_3,C_5,\ldots, C_{2\ell-1},C_{2k+1}\}$-free graph on $n$ vertices, where $n\geq 187k$. 
    \begin{itemize}
        \item[\rm (a)] If $\ell=k$ and $\lambda(G)\geq \lambda(S_{2k-1}(T_{n-2k +1,2}))$, then $G$ is bipartite, unless $G=  S_{2k-1}(T_{n-2k +1,2})$. 
        
        \item[\rm (b)] If $\ell\leq k-1$ and $\lambda(G)\geq \lambda(C_{2\ell+1}(T_{n-2\ell ,2}))$, then $G$ is bipartite, unless $G=  C_{2\ell+1}(T_{n-2\ell ,2})$.
    \end{itemize}
\end{theorem}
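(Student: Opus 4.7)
The plan is to follow the classical spectral stability method combined with the double-eigenvector technique. Assume for contradiction that $G$ is non-bipartite and distinct from the claimed extremal graph, yet $\lambda(G) \ge \lambda^*$, where $\lambda^*$ denotes $\lambda(S_{2k-1}(T_{n-2k+1,2}))$ in case (a) or $\lambda(C_{2\ell+1}(T_{n-2\ell ,2}))$ in case (b). Since both extremal graphs contain a nearly balanced complete bipartite subgraph on $n - O(k)$ vertices, one gets $\lambda^* \ge \frac{n}{2} - O(k)$. Let $\mathbf{x}$ denote the Perron eigenvector of $G$ normalized so that $\|\mathbf{x}\|_\infty = 1$, attained at some vertex $z$. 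The eigenvalue equation $\lambda(G)\,x_v = \sum_{u \sim v} x_u$ together with the hypothesis $n \ge 187k$ are the basic tools.

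Next I would invoke a spectral stability argument in the spirit of Nikiforov \cite{Niki2008}: since $G$ is $C_{2k+1}$-free with spectral radius close to $n/2$, there is a near-bipartite substructure. Set $L = \{v : x_v \ge 1 - \varepsilon\}$ for a suitable constant $\varepsilon = \varepsilon(k)$; I would show $|L| \ge n - O(k)$ and that $L$ splits as $V_1 \cup V_2$ with each $V_i$ independent, and every vertex of $L$ has degree at least $n/2 - O(k)$ into the opposite class. Write $W = V(G) \setminus L$, so $|W| \le O(k)$ and each $w \in W$ has $x_w$ bounded away from $1$.

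The heart of the argument, flagged by the authors as their main innovation, is to establish that $G - W$ is actually bipartite with bipartition $(V_1, V_2)$. Suppose to the contrary an edge $uv \in E(G)$ lies inside some $V_i$. Using the near-universal neighbourhoods of $u$ and $v$ in $V_{3-i}$ and alternately extending paths through pairs of common neighbours, one constructs odd closed walks of every length in $\{2\ell+1, 2\ell+3, \dots, 2k+1\}$ and purifies them into odd cycles of those lengths, culminating in a forbidden $C_{2k+1}$; shorter odd cycles are ruled out directly by $\{C_3, \dots, C_{2\ell-1}\}$-freeness. The subtlety is that the extending vertices must lie in $L \setminus W$, and the eigenvector-weighted counts of common neighbours must dominate the error contributed by $W$; the numerical threshold $n \ge 187k$ is presumably calibrated precisely to make this counting go through.

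With $G - W$ bipartite, I would pin down $W$ and its attachment via the double-eigenvector technique. Define $\mathbf{y}$ by $y_v = x_v$ on $V_1$ and $y_v = -x_v$ on $V_2$, leaving $y_w$ on $W$ as a free parameter; the inequality $\mathbf{y}^{T} A(G)\mathbf{y} \le \lambda(G)\|\mathbf{y}\|^2$ combined with the equality $\mathbf{x}^{T} A(G)\mathbf{x} = \lambda(G)\|\mathbf{x}\|^2$ yields tight constraints on the edges incident with $W$. These constraints force $W$ to attach on only one side of the bipartition, in precisely the structural pattern of $C_{2\ell+1}$ in case (b) or of the subdivision path $P_{2k+1}$ replacing an edge in case (a); any other configuration strictly decreases $\lambda(G)$ below $\lambda^*$. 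The main obstacle, as anticipated, is the bipartization step: standard spectral stability only gives $G$ within $o(n^2)$ edges of bipartite, and sharpening this to an exact bipartite $G - W$ with $|W|$ bounded by a constant depending only on $k$ is what permits the quantitative range $n \ge 187k$ and replaces the qualitative ``sufficiently large $n$'' hypothesis of earlier work.
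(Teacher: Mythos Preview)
Your outline has the right overall architecture (stability $\Rightarrow$ near-bipartite structure $\Rightarrow$ exact structure via eigenvector manipulations), but several key steps diverge from the paper's proof in ways that would not go through as stated.

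First, your set $L=\{v:x_v\ge 1-\varepsilon\}$ is eigenvector-based, whereas the paper defines the ``bad'' set by \emph{degree}: $L=\{v:d(v)\le \tfrac{n}{2}-\tfrac{5}{4}\sqrt{kn}\}$. This is not a cosmetic choice. The paper's bipartization step relies on the weakly-pancyclic lemma of Brandt--Faudree--Goddard (Lemma~\ref{weakly-pancyclic}), which requires $\delta(G-L)\ge (n+2)/3$; a bound on $x_v$ does not give this. Your proposed substitute---extending an edge inside $V_i$ to odd cycles of every length up to $2k+1$ via common neighbours---is the standard spectral-stability move, but it is precisely what earlier papers used and what forces ``sufficiently large $n$'' rather than $n\ge 187k$. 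The authors flag the weakly-pancyclic argument as the innovation that delivers the linear bound; your plan does not contain it.

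Second, the intermediate structural layer is missing. The paper fixes a \emph{shortest} odd cycle $C=u_1\cdots u_{2t+1}$, proves $\ell\le t\le k-1$, shows $L\subseteq V(C)$ (hence $|L|<2k$, refining the initial $|L|\le\sqrt{kn}$), and then establishes that every $u_i$ with $i\ge 2$ has no neighbour outside $C$ (Lemmas~\ref{lem-L-2k}--\ref{lem-deg-zero}). Only after this is the graph known to be a complete bipartite graph with a single odd cycle attached at one vertex. Your proposal jumps from ``$G-W$ bipartite'' directly to determining the attachment of $W$, skipping this reduction.

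Third, the double-eigenvector technique in the paper is not the signed-vector test $\mathbf{y}^TA\mathbf{y}\le\lambda\|\mathbf{y}\|^2$ with $y_v=\pm x_v$. It compares $G$ with a perturbed graph $G'$ (obtained by moving one vertex across the bipartition) and uses the Perron vectors $\bm{x}$ of $G$ and $\bm{y}$ of $G'$ in the identity $\bm{y}^T(\lambda(G')-\lambda(G))\bm{x}=\bm{y}^T(A(G')-A(G))\bm{x}$; this is what pins down $|V_2'|-1\le |V_1'|+1\le |V_2'|$ (Lemma~\ref{lem-balanced}). Your version would yield inequalities about edges crossing the cut, but not the balancing statement needed to finish.
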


Although we say that Theorem \ref{thm-odd-bi-stable} is a  spectral analogue of Theorem \ref{thm-YP-many-cycles}, 
there are some significant differences between them. 
The min-degree stability results of Theorems \ref{thm-AES} -- \ref{thm-YP-many-cycles} are more difficult because of the requirement that every vertex must have large degree. 
To some extent, the spectral radius is analogous to the average degree. Moreover, the extremal graphs for these two problems are completely different. The extremal graph for the spectral problem is a large complete bipartite graph $T_{n-2\ell ,2}$ with a small odd cycle $C_{2\ell +1}$ attached, where the average degree is approximately \(\frac{n}{2}\). 
In contrast, the extremal graph for the min-degree problem is similar to H\"{a}ggkvist's construction, i.e., attaching a   complete bipartite graph $K_{\frac{n}{2(2\ell+1)},\frac{n}{2(2\ell+1)}}$ to every vertex of the odd cycle $C_{2\ell +1}$, where the min-degree is exactly \(\frac{n}{2(2\ell +1)}\).

We would like to emphasize that Theorem \ref{thm-odd-bi-stable} requires merely a linear bound on $n$, namely, $n\ge 187k$. This substantially improves the polynomial bound in \Cref{thm-YP-many-cycles} as well as the non-specified large bound in \Cref{thm-non-bi-C2k+1}. 
We point out here that we made no effort to optimize the coefficient, and rather focused on making the argument as general and easy to read as possible. By being significantly more careful and tweaking multiple parts of our argument, we can improve this constant.   
 Finding specified bounds (even linear or tight bounds), instead of sufficiently large bounds, turns out to be interesting and significant for some extremal graph problems in the literature; 
 see, e.g., \cite{LN2021outplanar,ZL2022jgt,LP-ejc2022,LLP2022,LFP2024-triangular} for recent spectral graph results.

In view of Theorems \ref{thm-LG-LSY} and \ref{thm-non-bi-C2k+1}, we know that part (a) in Theorem \ref{thm-odd-bi-stable} holds for $n\ge 2k+9$, and part (b) holds in the case $\ell =1$ for sufficiently large $n$. 
In our proof, we shall show that 
Theorem \ref{thm-odd-bi-stable} holds for
$ 1 \leq \ell\leq k-1$. 
 Here, we include a proof in  the case $\ell =1$ again, since our proof drops the condition in Theorem \ref{thm-non-bi-C2k+1} that requires the order $n$ to be sufficiently large. 

 \paragraph{Our approach.} 
 Our proof of part (b) in Theorem \ref{thm-odd-bi-stable} is quite different from that of \Cref{thm-non-bi-C2k+1} in  \cite{GLZ2021,ZZ2023}. 
 Our argument yields the structural results 
 that characterize the almost-extremal graphs. 
 To some extent, our approach develops the classical spectral stability method in \cite{CFTZ20}.  
 By applying a Brauldi--Hoffman--Tur\'{a}n type result (\Cref{lem-ZLS}), 
 we can show that the spectral extremal graph $G$ 
 must have a large number of edges. 
 More precisely, a non-bipartite $\{C_3,C_5,\ldots ,C_{2\ell -1},C_{2k+1}\}$-free graph $G$ with the maximum spectral radius must have at least $n^2/4 -kn +k$ edges, and $G$ contains very few ``bad'' vertices with small degree; see Lemmas \ref{lem-graph-size} and \ref{lem-bound-L}. 
As is well-known, the conventional stability method allows us to remove at most $o(n^2)$ edges from $G$ to make it bipartite. 
We point out that the supersaturation-stability method developed Li, Feng and Peng \cite{LFP2024-triangular,LFP-count-bowtie} reveals that it is sufficient to remove less than $2kn$ edges since every $C_{2k+1}$-free graph contains less than $\frac{3}{2}kn^2$ triangles. 
Fortunately, in our setting, we can improve the procedure of the stability method by removing less than $\sqrt{kn}$ bad vertices only, rather than removing a number of edges. 
This is a key contribution of our paper.

 The main innovation of our proof lies in
utilizing a result on the minimum degree for 
 weakly pancyclic graphs (\Cref{weakly-pancyclic}), which helps us to show that after excluding few bad vertices, the resulting subgraph is (complete) bipartite; see \Cref{V_1V_2}. 
 This method turns out to be more efficient than the conventional stability method, and it 
 is targeted at the extremal problems involving cycles.  
 Under this structural result, we prove further that all bad vertices are contained in a shortest odd cycle, say $C$, and  
 all but at most one vertex of $C$ have no neighbors in $G-C$; see Lemmas \ref{lem-L-2k}, \ref{lem-pathlength} and 
\ref{lem-deg-zero}. 
Finally, using the so-called double-eigenvector technique, we will show that 
excepting the vertices of the shortest odd cycle, the remaining vertices form a balanced complete bipartite graph; see \Cref{lem-balanced}.  
In a word, we must have $G=C_{2\ell +1}(T_{n-2\ell ,2})$, which is the expected extremal graph.

\subsection{Cycles with consecutive lengths}

A well-known result of Bondy \cite{Bon1971jctb} states that if $G$ is an $n$-vertex Hamiltonian graph with $m> \lfloor n^2/4 \rfloor $ edges, then 
$G$ contains all cycles $C_{\ell}$ with $\ell \in [3,n]$. 
Afterwards, Bondy \cite{Bon1971dm} and Woodall \cite{Woo1972} proved that if $G$ is an $n$-vertex graph with $m> \lfloor n^2/4\rfloor $ edges, then $G$ contains a cycle $C_{\ell}$ for every $3\le \ell \le \lfloor \frac{n+3}{2}\rfloor$. 
In 2008, Nikiforov \cite{Niki2008} studied the spectral condition 
on the existence of cycles with consecutive lengths and showed that 
if $n$ is sufficiently large and 
$G$ is an $n$-vertex graph with $\lambda (G)>  \lambda (T_{n,2})$, 
 then $G$ contains a cycle $C_{\ell}$ 
for every  $ \ell \in [3, n/320]$. 
Moreover, Nikiforov remarked that the constant $1/320$ can be increased with careful calculations. 
Hence, He proposed the following problem.

\begin{problem}[Nikiforov \cite{Niki2008}] \label{quesniki}
What is the maximum $c$ such that for  sufficiently large $n$, every graph $G$ of order $n$ with 
$\lambda (G)> \lambda (T_{n,2})$ 
contains a cycle $C_{\ell}$ for every $\ell \le (c+o(1))n$. 
\end{problem}

Different from the conventional density version, 
Nikiforov constructed an example that shows 
$c\le \frac{3-\sqrt{5}}{2}\approx 0.382$ in Problem \ref{quesniki}.   
The recent progress on this problem is listed below: 
Ning and
Peng \cite{NP2020} slightly improved $c\ge 1/160$;  
 Zhai and 
Lin  \cite{ZL2022jgt} showed the result to $c\ge 1/7$; 
Li and Ning \cite{LN2023} proved that $c \ge 1/4$, and Zhang \cite{Zhang-wq-2024} 
improved that $c\ge 1/3$.

In the sequel, we investigate Problem \ref{quesniki} for non-bipartite graphs. 
In particular, the case $\ell =1$ in Theorem \ref{thm-odd-bi-stable} implies that if $n\ge 187k$ and $G$ is 
a non-bipartite graph of order $n$ with $\lambda (G)\ge \lambda (C_3(T_{n-2,2}))$, then 
$G$ contains a $C_{2k+1}$, unless 
$G=  C_3(T_{n-2,2})$. 
As a quick application, 
we obtain the following corollary 
involving consecutive odd cycles in non-bipartite graphs.

\begin{cor} \label{cor-conse-odd-cycles}
If $G$ is a non-bipartite graph of order $n$ with $\lambda (G)\ge \lambda (C_3(T_{n-2,2}))$, then $G$ contains an odd cycle $C_{2k+1}$ 
    for each $k \in [2,n/187]$, unless $G=  C_3(T_{n-2,2})$. 
\end{cor}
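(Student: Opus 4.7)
The plan is to derive this corollary as an essentially immediate consequence of Theorem \ref{thm-odd-bi-stable} applied with $\ell=1$. When $\ell=1$, the excluded family $\{C_3,\ldots,C_{2\ell-1},C_{2k+1}\}$ collapses to $\{C_{2k+1}\}$ (the list $\{C_3,\ldots,C_{2\ell-1}\}$ being empty), the constraint $1\le \ell\le k-1$ from part (b) of the theorem becomes $k\ge 2$, and the extremal graph $C_{2\ell+1}(T_{n-2\ell,2})$ is precisely $C_3(T_{n-2,2})$. Thus the quantitative ranges stated in the corollary, namely $k\ge 2$ and $k\le n/187$ (equivalently $n\ge 187k$), match exactly the hypotheses needed to invoke Theorem \ref{thm-odd-bi-stable}(b).

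I would then argue by contrapositive. Assume that $G$ is a non-bipartite graph of order $n$ with $\lambda(G)\ge \lambda(C_3(T_{n-2,2}))$ and $G\ne C_3(T_{n-2,2})$, and suppose for contradiction that there exists some $k\in[2,n/187]$ for which $G$ contains no $C_{2k+1}$. Since $n\ge 187k$, all numerical and structural hypotheses of Theorem \ref{thm-odd-bi-stable}(b) are satisfied for this $k$ with $\ell=1$. The conclusion of that theorem forces $G$ to be either bipartite or equal to $C_3(T_{n-2,2})$, each of which contradicts the standing assumptions on $G$. Hence $G$ must contain an odd cycle $C_{2k+1}$ for every $k$ in the range, as desired.

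There is no genuine obstacle to overcome in this derivation; the entire content is packaged into the main theorem. The only points worth verifying carefully are that (i) the empty interpretation of the forbidden family at $\ell=1$ correctly reduces the hypothesis to ``$C_{2k+1}$-free'', and (ii) the explicit linear threshold $n\ge 187k$ (rather than a vague ``sufficiently large $n$'' bound, as in prior work by Guo--Lin--Zhao \cite{GLZ2021} and Zhang--Zhao \cite{ZZ2023}) truly holds in the case $\ell=1$. Both are guaranteed by Theorem \ref{thm-odd-bi-stable} as formulated, so the corollary is a one-step deduction once the main theorem has been established.
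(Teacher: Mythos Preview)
Your proposal is correct and matches the paper's own treatment: the corollary is stated immediately after Theorem~\ref{thm-odd-bi-stable} as the special case $\ell=1$, with the paper remarking only that ``the case $\ell=1$ in Theorem~\ref{thm-odd-bi-stable} implies'' the claimed conclusion. Your contrapositive write-up simply spells out this one-step deduction in slightly more detail than the paper does.
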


This corollary allows us to put $k$ in a wide range that can grow linearly with $n$. Hence, we substantially 
improve the bound obtained by Zhang and Zhao \cite[Theorem 2]{ZZ2023}. 
We remark here that the triangle cannot be guaranteed in such a graph $G$ in \Cref{cor-conse-odd-cycles}, since $S_1(T_{n-1,2})$ is a non-bipartite graph and  $\lambda (S_1(T_{n-1,2}))> 
\lambda (C_3(T_{n-2,2}))$, but it does not contain triangle.  

Motivated by Nikiforov's problem, one may ask: does the graph $G$ in Corollary \ref{cor-conse-odd-cycles} have even cycles with consecutive lengths as well? In this paper, we answer this question as follows.

\begin{theorem}\label{thm-non-bi-cycles}
    For each $\varepsilon \in (0,\frac{1}{3})$, there exists  $N(\varepsilon)$ such that if $G$ is a non-bipartite graph on $n \ge N(\varepsilon)$ vertices and $\lambda(G)>\lambda(C_3(T_{n-2,2}))$, then $G$ has a cycle $C_{\ell}$ for every $\ell \in [4, (\frac{1}{3} - \varepsilon)n]$.
\end{theorem}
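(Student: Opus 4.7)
The plan is to establish a structural dichotomy: once a sublinear set of ``bad'' vertices is removed from $G$, what remains is a near-complete balanced bipartite graph, and cycles of every length in the required range then follow easily --- bipancyclicity handles even lengths, while odd lengths come from routing through a single bad vertex.

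The main work is to extract the near-bipartite skeleton. Since $\lambda(G) > \lambda(C_3(T_{n-2,2})) = (n-2)/2 + O(1/n)$, the standard Hong-type inequality gives $e(G) \geq n^2/4 - O(n)$. I would adapt the Perron-eigenvector stability analysis developed in the proof of \Cref{thm-odd-bi-stable} (specifically the arguments behind \Cref{lem-graph-size} and \Cref{lem-bound-L}) to extract a partition $V(G) = L \sqcup V_1 \sqcup V_2$ with $|L| = o(\sqrt{n})$, $|V_1|, |V_2| = (1+o(1))n/2$, such that $H := G[V_1 \cup V_2]$ is bipartite with bipartition $(V_1, V_2)$ and $\delta(H) \geq n/2 - o(\sqrt{n})$. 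Crucially, the forbidden-odd-cycle hypothesis used in the proof of \Cref{thm-odd-bi-stable} is unnecessary here, since this near-bipartite structural conclusion follows from spectral stability alone.

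With the skeleton in hand, the remainder of the argument splits into two cases. \emph{Even lengths.} The near-complete bipartite graph $H$ is bipancyclic (by a minor variant of Schmeichel--Mitchem, or directly, since $H$ is $K_{N/2,N/2}$ with $o(N^2)$ edges removed, where $N = n - |L|$), so $C_{2k} \subseteq H$ for every $k \in [2, N/2]$, covering every even $\ell$ in $[4, (\tfrac{1}{3} - \varepsilon)n]$. \emph{Odd lengths.} By \Cref{cor-conse-odd-cycles}, $G$ contains a $C_5$, which must meet $L$ because $H$ is bipartite; a short case analysis then produces a vertex $u \in L$ together with neighbors $a \in V_1 \cap N_G(u)$ and $b \in V_2 \cap N_G(u)$. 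For each odd $\ell \in [5, (\tfrac{1}{3} - \varepsilon)n]$, we take an $a$-$b$ path of length exactly $\ell - 2$ in $H$ (the bipartite parity forces this length to be odd, as required) and close it through $u$; since $H$ is Hamilton-connected between opposite parts and $\ell - 2$ lies well within the available range $[1, N-1]$, the required path --- and hence $C_\ell$ --- exists.

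The hard part is the first step: establishing the near-bipartite skeleton under our weaker hypothesis (only non-bipartiteness plus the spectral bound, with no explicit forbidden-odd-cycle assumption). I expect the stability argument from the proof of \Cref{thm-odd-bi-stable} to adapt with only minor modifications, but one must verify carefully that no new near-extremal non-bipartite configuration escapes the analysis at the weaker threshold $\lambda(C_3(T_{n-2,2}))$. Once this skeleton is secured, the remaining construction combines bipancyclicity of dense bipartite graphs with the elementary trick of routing odd cycles through a single $L$-vertex, and is essentially routine.
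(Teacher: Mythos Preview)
Your proposal has a genuine gap at the very first step. You claim that the near-bipartite skeleton --- a partition $V(G)=L\sqcup V_1\sqcup V_2$ with $|L|=o(\sqrt{n})$ and $G[V_1\cup V_2]$ bipartite of near-full minimum degree --- ``follows from spectral stability alone,'' with the forbidden-odd-cycle hypothesis of \Cref{thm-odd-bi-stable} being unnecessary. This is false. Take $G$ to be $T_{n,2}$ together with a perfect matching inside one part: then $\lambda(G)\ge d(G)=\tfrac{n}{2}+\tfrac{1}{2}>\lambda(C_3(T_{n-2,2}))$, yet every vertex set whose removal makes $G$ bipartite must cover that matching and hence has size at least $n/4$. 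Your preliminary edge-count claim is equally wrong: Hong's bound $\lambda\le\sqrt{2m-n+1}$ yields only $m\gtrsim n^2/8$, and indeed a clique $K_{\lfloor n/2\rfloor+1}$ with a pendant path attached has $\lambda\ge n/2$ but $e\approx n^2/8$. The point is that \Cref{lem-graph-size}, \Cref{lem-bound-L} and \Cref{V_1V_2} each depend essentially on the $C_{2k+1}$-free hypothesis --- through the Brualdi--Hoffman--Tur\'an bound, through Bondy--Woodall, and through weak pancyclicity respectively --- and none of them survives once that hypothesis is dropped.

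The paper's proof is organised precisely to avoid this obstruction. Instead of asserting a global near-bipartite structure, it uses Zhang's $\mad$-versus-$\lambda$ inequality (\Cref{thm-Zhang}) to pass to an induced subgraph $H$ with $\delta(H)\gtrsim n/6$ and $\lambda(H)$ still large, and then branches on whether $H$ is $2$-connected and whether $H$ is bipartite. The non-bipartite branches are handled by the Voss--Zuluaga circumference bounds combined with the Gould--Haxell--Scott theorem on consecutive cycle lengths in graphs of linear minimum degree --- tools absent from your outline. Only in the single subcase where $H$ is $2$-connected and bipartite does anything resembling your skeleton emerge, and there the inequality $\lambda^2(H)>\tfrac14(n^2-\tfrac{4}{3}kn-4n+3)$ forces $k=|V(G)\setminus V(H)|\le 6$; the near-bipartite structure is thus an \emph{output} of the case analysis, not an input. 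Your routing-through-$L$ idea for odd lengths is essentially what the paper does in that subcase (Claim~\ref{clm-cycle-length}), but reaching it requires the machinery you have left out.
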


\Cref{thm-non-bi-cycles} not only guarantees the existence of even cycles with consecutive lengths, but also improves the range of $k$ in Corollary \ref{cor-conse-odd-cycles} for large $n$. Our proof of \Cref{thm-non-bi-cycles} is different from that of Theorem \ref{thm-odd-bi-stable}. Our argument adopts a line  similar to  that in \cite{LN2023,Zhang-wq-2024}. This is not the main ingredient of this paper, so we postpone the detailed argument to the Appendix \ref{App-B}.

\section{Preliminaries} 

\paragraph{Notation.} 
Let $G= (V( G), E( G))$ be a simple graph with vertex set $V( G)$ and edge set $E(G)$. 
We denote by $e( G)$ the number of edges in $G$. 
For a vertex
$v\in V(G)$, let $N(v)$ be the set of neighbors of $v$ in $G$. The {\it degree} $d(v)$ of $v$ is equal to $|N(v)|$. 
The minimum and maximum degrees are denoted by $\delta(G)$ and $\Delta(G)$, respectively. For $V_1, V_2\subseteq V( G)$, let $E( V_1, V_2)$ denote the set of edges of $G$ between $V_1$ and $V_2$, and let $e(V_1,V_2)=|E(V_1,V_2)|$. For every  $S\subseteq V(G)$, we write $N(S)=\cup_{u\in S}N(u)$ and  $d_S(v)=|N_S(v)|=|N(v)\cap S|$. Denote by $G - S$ the subgraph obtained from $G$ by deleting all vertices in $S$ with their incident edges. Let $G[S]$ be  the subgraph induced by $S$, i.e., 
a graph whose vertex set is $S$ and the edge set consists of all the edges of $G$ with two  ends in $S$.  
Since $A(G)$ is a non-negative matrix, 
the Perron--Frobenius theorem implies that the spectral radius $\lambda (G)$ is actually a largest eigenvalue of $A(G)$ and there exists a 
non-negative eigenvector of $A(G)$ corresponding to $\lambda(G)$. 
Throughout the paper, 
let $\bm{x} = ( x_1, \ldots , x_n)^{T}$ be such a non-negative eigenvector. 
The following two facts will be frequently used: The first states that for every $i\in V(G)$, we have 
$$
\lambda(G)x_i=\sum_{j\in N(i)}x_j.
$$ 
The second fact concerns the Rayleigh quotient:
$$\lambda(G)=\max_{\bm{x}\in\mathbb{R}^n}\frac{\bm{x}^{T}A(G)\bm{x}}{\bm{x}^{T}\bm{x}} 
=\max_{\bm{x}\in\mathbb{R}^n}\frac{1}{\bm{x}^{T}\bm{x}} 2\sum_{ij\in E(G)}x_ix_j,$$
where we write $\sum_{ij\in E(G)}$ for the sum over each edge in $E(G)$ once.

\medskip 
In our proof of \Cref{thm-odd-bi-stable}, we need to use the following lemmas. 

\begin{lemma}[See \cite{Bon1971dm,Woo1972}]  \label{lem-Bon-Woo} 
Let $G$ be a  graph  on $n$ vertices 
with  
\[  e(G)\ge  \left\lfloor \frac{n^2}{4} \right\rfloor +1. \]   
Then $G$ contains a cycle  $C_{\ell}$ 
for each $3\le \ell \le \lfloor \frac{n+3}{2} \rfloor$. 
\end{lemma}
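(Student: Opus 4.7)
The plan is to handle $C_3$ directly via Mantel's theorem, to produce a long cycle via an Erd\H{o}s--Gallai circumference argument, and then to fill in every intermediate length by combining chord-based and detour-based cycle-modification moves.

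For $\ell = 3$, Mantel's theorem yields a triangle immediately from $e(G) \ge \lfloor n^2/4 \rfloor + 1 > \lfloor n^2/4 \rfloor$. For the long cycle, I would invoke the Erd\H{o}s--Gallai circumference bound: any $n$-vertex graph with more than $\tfrac{(k-1)(n-1)}{2}$ edges contains a cycle of length at least $k$. A short parity calculation shows that $\lfloor n^2/4 \rfloor + 1 > \tfrac{(k-1)(n-1)}{2}$ when $k = \lfloor (n+3)/2 \rfloor$, so $G$ contains a cycle $C = v_1 v_2 \cdots v_c v_1$ with $c \ge \lfloor (n+3)/2 \rfloor$. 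It therefore suffices to realize $C_\ell$ for every $4 \le \ell \le c$.

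For this final and main step, I would combine two complementary cycle-modification moves. First, each chord $v_i v_j$ of $C$ splits $C$ into two subcycles whose lengths sum to $c + 2$, and by fanning chords from a single vertex of $C$ one sweeps out a contiguous interval of cycle lengths just below $c$. Second, whenever $G[V(C)]$ is chord-poor, the edge count forces $V(G) \setminus V(C)$ to send many edges into $V(C)$, so picking any off-cycle vertex $u$ with two neighbors $v_i, v_j \in V(C)$ and replacing the corresponding arc of $C$ by the detour $v_i u v_j$ produces a new cycle whose length differs from the original subcycle length by a controlled, tunable amount. Alternating these two moves should cover every integer in $[4, c]$.

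The main obstacle is the delicate bookkeeping required to ensure that no length in the interval is skipped. As in the original Bondy--Woodall arguments, one has to show that the set of realized cycle lengths is a contiguous interval $[3, c']$ with $c' \ge \lfloor (n+3)/2 \rfloor$, which demands a parity-sensitive case analysis on the distribution of chords inside $V(C)$ and attachments from outside. The tightness of the hypothesis $e(G) \ge \lfloor n^2/4 \rfloor + 1$ is crucial here, since $T_{n,2}$ has exactly $\lfloor n^2/4 \rfloor$ edges and contains no odd cycles at all; any weakening of the edge bound would permit a bipartite counterexample and thereby defeat the argument.
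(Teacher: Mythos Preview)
The paper does not supply a proof of this lemma; it is stated with a citation to Bondy and Woodall and used as a black box. There is therefore no in-paper argument against which to compare your proposal.

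As for the proposal itself: the first two steps are sound. Mantel gives $C_3$, and the Erd\H{o}s--Gallai circumference bound (also quoted in the paper as Lemma~\ref{Erdos-Gallai}) with $t = \lfloor (n+3)/2\rfloor - 1$ does yield a cycle of length at least $\lfloor (n+3)/2\rfloor$; your parity check is correct. The third step, however, is not a proof but a hope. Fanning chords from a single cycle vertex presupposes that such chords exist, yet a longest cycle may well be induced; and the off-cycle detour move changes the length by an amount depending on which arc is replaced, so there is no a~priori reason the set of attainable lengths forms an interval. You concede this yourself as ``the main obstacle,'' and indeed it is where essentially all of the work in the original Bondy--Woodall arguments lies: one needs a maximal-cycle argument combined with edge counts on pairs of consecutive cycle vertices to force every intermediate length. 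Your sketch stops before that point, so it is an outline rather than a proof.
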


 \begin{lemma}[See \cite{Nosal1970,Niki2002cpc,Niki2006-walks}] \label{lem-Nosal} 
Let $G$ be a triangle-free  graph with $m$ edges. Then 
\[ {  \lambda (G)\le \sqrt{m} }, \] 
with equality if and only if 
$G$ is a complete bipartite graph (possibly with some isolated vertices).
\end{lemma}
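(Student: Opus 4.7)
The plan is to run the standard walk-counting argument on the Perron eigenvector and then use triangle-freeness to get a tight handle on two-step walks from a vertex of maximum eigenvector weight. Let $\bm{x} = (x_v)_{v \in V(G)}$ be a nonnegative eigenvector for $\lambda := \lambda(G)$, and pick $u \in V(G)$ with $x_u = \max_{v} x_v$. (If $x_u = 0$ then $\lambda = 0$ and there is nothing to prove, so assume $x_u > 0$.) Applying $A(G)$ twice yields the walk identity $\lambda^{2} x_u = \sum_{v \sim u} \sum_{w \sim v} x_w$, and the uniform bound $x_w \le x_u$ collapses this to $\lambda^{2} \le \sum_{v \sim u} d(v)$, which is the only non-trivial inequality needed.

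Triangle-freeness now does the essential work: because no two neighbours of $u$ are adjacent, $N(u)$ is independent, so every edge counted by $\sum_{v \sim u} d(v)$ takes the form $vw$ with $v \in N(u)$ and $w \notin N(u)$, and each such edge is counted exactly once. Hence
\[
\sum_{v \sim u} d(v) \;=\; e\bigl(N(u),\, V \setminus N(u)\bigr) \;\le\; e(G) \;=\; m,
\]
and combining this with the previous display gives $\lambda(G)^{2} \le m$, which is the desired bound.

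For the equality case I would trace back both inequalities. Equality in $x_w \le x_u$ along every length-two walk forces the Perron entries to be constant on all vertices at distance at most two from $u$; equality in $\sum_{v \sim u} d(v) \le m$ forces every edge of $G$ to be incident to $N(u)$, so $V \setminus N(u)$ is independent and $G$ is bipartite with parts $N(u)$ and $V \setminus N(u)$ (up to isolated vertices). A short Rayleigh-quotient perturbation then closes the gap to a \emph{complete} bipartite graph: if there were a non-adjacent pair $a \in N(u)$, $b \in V \setminus N(u)$ with $x_a, x_b > 0$, inserting the edge $ab$ would strictly increase $2 \sum_{ij \in E} x_i x_j$ while the denominator $\sum_v x_v^{2}$ stays the same, contradicting that $\lambda^{2} = m$ was already attained. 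The converse $\lambda(K_{a,b}) = \sqrt{ab} = \sqrt{m}$ is a direct computation with the eigenvector $(\sqrt{b},\dots,\sqrt{b};\sqrt{a},\dots,\sqrt{a})^{T}$.

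The main obstacle is not the inequality itself — which follows in a few lines from the walk-count — but the equality characterisation. Isolated vertices, possibly disconnected $G$, and the need to apply the Perron property component by component all require some care; one has to verify that the two equality conditions jointly force every cross-pair in the bipartition to be joined, and the perturbation argument above is the cleanest way I know to handle that step uniformly.
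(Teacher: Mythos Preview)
The paper does not prove this lemma; it is cited from the literature (Nosal; Nikiforov), so there is no in-paper argument to compare against. Your walk-counting derivation of $\lambda^{2} \le m$ is correct and standard, as is the deduction that equality forces $G$ (after passing to its unique non-trivial component) to be bipartite with parts $A := N(u)$ and $B := V\setminus N(u)$.

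The gap is the perturbation step for \emph{completeness}. Inserting a missing cross-edge $ab$ yields a triangle-free graph $G'$ on $m+1$ edges; the Rayleigh bound gives $\lambda(G') > \sqrt{m}$, but Nosal applied to $G'$ only says $\lambda(G')^{2} \le m+1$, so nothing is contradicted --- the sentence ``contradicting that $\lambda^{2}=m$ was already attained'' has no force once the edge count has changed. The fix is to use the eigen-equations you already have in hand: on the non-trivial component the Perron vector is strictly positive, and you have shown $x_w = x_u$ for every $w \in B$. Comparing the eigen-equation at such a $w$ with that at $u$ gives $\sum_{v \in N(w)} x_v = \lambda x_w = \lambda x_u = \sum_{v \in A} x_v$; since $N(w) \subseteq A$ (both parts are independent) and each $x_v > 0$, this forces $N(w) = A$ for every $w \in B$, which is exactly completeness.
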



 \begin{lemma}[See \cite{ZLS2021}] 
 \label{lem-ZLS}
If $G$ is a graph with $m$ edges and 
\[  \lambda (G) > \frac{k-1/2 + \sqrt{4m + (k-1/2)^2}}{2} , \]
then $G$ contains a cycle of length $t$ 
for every $t\le 2k+2$. 
\end{lemma}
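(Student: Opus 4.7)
The plan is to combine the spectral Nosal inequality (Lemma~\ref{lem-Nosal}) with a Perron-eigenvector computation and a pancyclicity-type extension. First, for the case $t=3$: since $k\ge 1$, the hypothesis gives $\lambda^2 > (k-\tfrac12)\lambda + m \ge m$, hence $\lambda > \sqrt{m}$, and Nosal's inequality immediately forces $G$ to contain a triangle.

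Next, I would invoke the nonnegative Perron eigenvector $\bm x$ of $A(G)$, normalized so that $x_u = \max_v x_v = 1$ for a distinguished vertex $u$, and exploit the two-step identity
\[
\lambda^2 = \lambda^2 x_u = d(u) + \sum_{w\ne u} |N(u)\cap N(w)|\, x_w.
\]
Using $x_w \le 1$ on the right and combining with the spectral hypothesis, the goal is to show that the excess $\lambda^2 - (k-\tfrac12)\lambda - m > 0$ forces some edge $uv\in E(G)$ with $|N(u)\cap N(v)| \ge k$, equivalently a book $B_k$ embedded in $G$. This book immediately produces $C_3$ and $C_4$.

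For cycles $C_t$ with $5\le t\le 2k+2$, the book alone is insufficient. My strategy is to repeatedly peel off vertices carrying very small eigenvector weight: because such deletions decrease $\lambda^2$ only slightly more than they decrease $m$, the spectral hypothesis persists in the residual subgraph $H$. After enough peelings, $H$ has minimum degree at least roughly $k+1$ while still containing a triangle, at which point a weakly-pancyclic-type theorem in the spirit of Brandt--Faudree--Goddard yields cycles of all lengths between $3$ and $2k+2$ in $H$, and hence in $G$.

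The main obstacle, I expect, is the peeling step: one needs a quantitative ``spectral robustness'' estimate that controls the simultaneous decrease of $\lambda^2$ and of $m$ when a low-weight vertex is removed, in such a way that the hypothesis $\lambda^2 > (k-\tfrac12)\lambda + m$ is preserved. The additive shift $k-\tfrac12$ in the threshold appears precisely calibrated for this balance, so any proof of the sharp bound must treat this trade-off with care rather than through crude estimates.
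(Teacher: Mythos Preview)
This lemma is not proved in the paper; it is quoted from Zhai--Lin--Shu~\cite{ZLS2021} (note the attribution ``See~\cite{ZLS2021}''), so there is no in-paper argument to compare your outline against.

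As for the outline itself, the $t=3$ step via Lemma~\ref{lem-Nosal} is fine and the two-step eigenvector identity is the right opening move, but what follows is a strategy rather than a proof, and both remaining steps have genuine holes. The peeling step you yourself flag as ``the main obstacle'' is indeed one: the only general deletion bound at hand, $\lambda^2(G-v)\ge\lambda^2(G)-2d(v)$ (Lemma~\ref{lem-Sun-Das-deletion}), lets the slack in $\lambda^2>(k-\tfrac12)\lambda+m$ drop by as much as $d(v)$ per deletion, so the hypothesis need not survive even one removal; refining via small $x_v$ and the Rayleigh quotient helps, but you have not shown the balance can be maintained over an entire sequence of deletions. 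More seriously, even granting a subgraph $H$ with $\delta(H)\ge k+1$ and a triangle, no Brandt--Faudree--Goddard type result applies: Lemma~\ref{weakly-pancyclic} requires $\delta(H)\ge(|H|+2)/3$, and there is no theorem producing all of $C_3,\dots,C_{2k+2}$ from the far weaker hypothesis $\delta\ge k+1$ plus one triangle. The actual argument in~\cite{ZLS2021} avoids peeling altogether: it stays in $G$, derives path-length constraints inside $G[N(u)]$ and codegree constraints from $V\setminus N[u]$ under the assumption that some $C_t$ with $t\le 2k+2$ is absent, and bounds $\lambda^2$ directly against $(k-\tfrac12)\lambda+m$, with the shift $k-\tfrac12$ coming from an Erd\H{o}s--Gallai path count in $N(u)$ rather than from any deletion trade-off.
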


The following lemma \cite{WXH2005} gives an operation that increases the spectral radius. 

 \begin{lemma}[See \cite{WXH2005}]  \label{lem-WXH}
 Assume that $G$ is a connected graph with 
 $u,v\in V$ and $w_1,\ldots ,w_s\in N(v)\setminus N(u)$. 
 Let $\bm{x}=(x_1,\ldots ,x_n)^{\top}$ be the Perron vector 
 with $x_v$ corresponding to the vertex $v$. 
 Let $G'=G-\{vw_i: 1\le i\le s\} + 
 \{uw_i: 1\le i \le s\}$. If ${x}_u\ge {x}_v$, 
 then $\lambda (G') > \lambda (G)$. 
 \end{lemma}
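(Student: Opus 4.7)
The plan is to exploit the Rayleigh quotient inequality and the positivity of the Perron vector. Since $G$ is connected, the Perron--Frobenius theorem guarantees that the normalized eigenvector $\bm{x}$ is strictly positive; in particular, each $x_{w_i}>0$. Writing $\lambda(G) = \bm{x}^{T} A(G) \bm{x}/\bm{x}^{T}\bm{x}$ and using the variational characterization
\[
\lambda(G') \;\ge\; \frac{\bm{x}^{T} A(G')\bm{x}}{\bm{x}^{T}\bm{x}},
\]
I would compare the two quadratic forms edge by edge. Only the $2s$ edges $\{vw_i\}$ and $\{uw_i\}$ differ between $A(G)$ and $A(G')$, and a direct expansion gives
\[
\bm{x}^{T} A(G')\bm{x} - \bm{x}^{T} A(G)\bm{x} \;=\; 2\sum_{i=1}^{s} x_{w_i}\bigl(x_u - x_v\bigr).
\]

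The hypothesis $x_u\ge x_v$ makes this difference non-negative. If $x_u>x_v$, then since all $x_{w_i}>0$ and $s\ge 1$, the difference is strictly positive, so $\lambda(G')\ge \bm{x}^{T} A(G')\bm{x}/\bm{x}^{T}\bm{x} > \lambda(G)$, and the lemma follows. The only genuinely delicate case is the equality $x_u = x_v$, where the Rayleigh quotient bound alone yields only $\lambda(G')\ge \lambda(G)$; this will be the main obstacle.

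To handle it, I would argue by contradiction: suppose $x_u = x_v$ and $\lambda(G') = \lambda(G)$. Then the chain of inequalities
\[
\lambda(G) \;=\; \frac{\bm{x}^{T} A(G)\bm{x}}{\bm{x}^{T}\bm{x}} \;=\; \frac{\bm{x}^{T} A(G')\bm{x}}{\bm{x}^{T}\bm{x}} \;\le\; \lambda(G')
\]
becomes an equality throughout, and the equality condition in the Rayleigh quotient for a symmetric matrix forces $\bm{x}$ to be an eigenvector of $A(G')$ associated with $\lambda(G')$, i.e.\ $A(G')\bm{x} = \lambda(G)\bm{x}$. Examining the coordinate equation at vertex $u$ in $G'$, and recalling that $N_{G'}(u) = N_G(u)\cup\{w_1,\dots,w_s\}$ (this uses the hypothesis $w_i\notin N(u)$), one gets
\[
\lambda(G)\,x_u \;=\; \sum_{j\in N_G(u)} x_j \;+\; \sum_{i=1}^{s} x_{w_i}.
\]
Combined with the eigenvalue equation $\lambda(G)\,x_u = \sum_{j\in N_G(u)} x_j$ for $G$, this yields $\sum_{i=1}^{s} x_{w_i}=0$, contradicting the strict positivity of the Perron entries $x_{w_i}>0$ (since $s\ge 1$). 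Therefore $\lambda(G')>\lambda(G)$ in this case as well, completing the proof.
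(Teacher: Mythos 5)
Your proof is correct. The paper itself gives no proof of this lemma --- it is quoted directly from Wu--Xiao--Hong \cite{WXH2005} --- and your argument is precisely the standard one for this result: the Rayleigh-quotient computation handles the strict case $x_u>x_v$, and the equality case $x_u=x_v$ is closed by observing that forced equality in the Rayleigh bound would make $\bm{x}$ an eigenvector of $A(G')$ for $\lambda(G')$ as well, whereupon comparing the eigen-equation at $u$ in $G$ and in $G'$ yields $\sum_i x_{w_i}=0$, contradicting the strict positivity of the Perron vector of the connected graph $G$. You correctly identify that the positivity needed comes from $G$ being connected (not $G'$), and you correctly use the hypothesis $w_i\notin N(u)$ so that $N_{G'}(u)=N_G(u)\sqcup\{w_1,\dots,w_s\}$. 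No gaps.
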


A walk $v_1v_2\cdots v_k$ in a graph $G$ is called an {\it internal path} if these $k$ vertices are distinct (except possibly $v_1=v_k$), $d_G(v_1)\ge 3$, $d_G(v_k)\ge 3$ and 
$d_G(v_2)= \cdots =d_G(v_{k-1})=2$ unless $k=2$. 
 Let $Y_n$ be the graph obtained from an induced path $v_1v_2\cdots v_{n-4}$ by attaching two pendant vertices to $v_1$ and other two pendant vertices to $v_{n-4}$. 
 Let $G_{uv}$ denote the graph obtained from $G$ by subdividing the edge $uv$, i.e., adding a new vertex on the edge $uv$. 

Hoffman and Smith \cite{HS1975} proved the following result (see, e.g., \cite[p. 79]{CDS1980}). 

\begin{lemma}[See \cite{HS1975}] \label{lem-HS}
    Let $G$ be a connected graph with $uv\in E(G)$. If $uv$ belongs to an internal path of $G$ and $G\not=  Y_n$, then $\lambda (G_{uv})< \lambda (G)$. 
\end{lemma}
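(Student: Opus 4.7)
The plan is to establish $\lambda(G_{uv}) < \lambda(G)$ by a Rayleigh-quotient argument on the internal path containing $uv$, combined with Smith's classification of graphs with small spectral radius to handle degenerate cases. Let the internal path be $P: v_0, v_1, \ldots, v_k$ with $uv = v_{i-1}v_i$, so $d(v_0), d(v_k) \geq 3$ and $d(v_j) = 2$ for $1 \leq j \leq k-1$; after subdivision we obtain an internal path $v_0, \ldots, v_{i-1}, w, v_i, \ldots, v_k$ of length $k+1$ in $G_{uv}$. Write $\mu := \lambda(G_{uv})$ and let $\bm{y}$ be its Perron eigenvector. The recurrence $\mu y_z = y_{z^-} + y_{z^+}$ holds at every degree-$2$ vertex $z$ of the subdivided path.

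The main step is to construct a test vector $\bm{x}$ on $V(G)$ whose Rayleigh quotient under $A(G)$ dominates $\mu$. I would set $x_z = y_z$ for every $z \notin P \cup \{w\}$, and along $P$ define values by the analogous recurrence $\mu x_j = x_{j-1} + x_{j+1}$ over one fewer index, choosing boundary data to match $\bm{y}$ at $v_0$ and $v_k$. For $\mu > 2$ the recurrence has two positive roots $t, t^{-1}$ with $t + t^{-1} = \mu$, and solutions take the hyperbolic form $A t^j + B t^{-j}$; in the degenerate regime $\mu \leq 2$ I would invoke Smith's classification (connected graphs with $\lambda \leq 2$ are the Dynkin or extended Dynkin diagrams $A_n, D_n, E_6, E_7, E_8, \tilde{A}_n, \tilde{D}_n, \tilde{E}_6, \tilde{E}_7, \tilde{E}_8$) and verify the inequality by direct inspection of the finite exceptional list, noting that $Y_n$ coincides with $\tilde{D}_n$ and is the unique graph in this list for which subdividing the stated edge preserves spectral radius $2$. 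For $\mu > 2$, substituting $\bm{x}$ into $\bm{x}^T A(G)\bm{x}$ and applying the recurrence cancels all interior-path contributions, reducing $\bm{x}^T A(G)\bm{x} - \mu \bm{x}^T \bm{x}$ to a boundary term at $v_0$ and $v_k$ that measures the mismatch between the actual eigenvector values at the off-path neighbors of $v_0, v_k$ and the hyperbolic extrapolation of $\bm{y}$ one step past the endpoints.

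The hard part will be proving this boundary term is strictly positive whenever $G \neq Y_n$. The contribution from each endpoint $v_\star \in \{v_0, v_k\}$ splits as a sum over the neighbors of $v_\star$ lying outside $P$; by the Perron equation at $v_\star$ in $G_{uv}$, the total eigenvector mass entering $v_\star$ from off-path neighbors equals $\mu y_{v_\star} - y_{v_{\star \pm 1}}$, which one can compare against the hyperbolic (or linear, in the $\mu = 2$ case) extension and show the discrepancy is nonnegative and strict unless both endpoints have the precise local structure forcing equality. Tracing through when equality holds at \emph{both} endpoints simultaneously, the degree and eigenvector constraints pin down $d(v_0) = d(v_k) = 3$ with two pendant neighbors at each end, giving exactly the graph $Y_n$. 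Excluding $Y_n$ thus yields $\lambda(G) \geq \bm{x}^T A(G)\bm{x}/\bm{x}^T\bm{x} > \mu$, as required.
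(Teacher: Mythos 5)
The paper does not prove Lemma~\ref{lem-HS}: it cites the result directly to Hoffman and Smith~\cite{HS1975} without reproducing an argument, so there is no in-paper proof to compare against. Evaluating your sketch on its own merits, the overall framework (transfer the Perron eigenvector $\bm{y}$ of $G_{uv}$ off-path, fill in the path by the three-term recurrence, and collect everything into an endpoint term) is a reasonable starting point, and your handling of the $\mu\le 2$ regime via Smith's classification is fine. However, the crucial final claim---that the boundary term is strictly positive whenever $G\neq Y_n$---is false for the test vector you construct, and this is a genuine gap.

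To see the failure concretely, take the shortest case $k=1$, path $P=v_0v_1$. Your test vector sets $x(v_0)=y(q_0)=a$, $x(v_1)=y(q_2)=b$, where $q_0,q_1,q_2$ is the subdivided path and $c:=y(q_1)$ satisfies $\mu c=a+b$. Carrying out the vertex-by-vertex expansion exactly as you describe, all off-path and interior contributions vanish and the residue is
\[
\bm{x}^{T}A(G)\bm{x}-\mu\,\bm{x}^{T}\bm{x}
= a(b-c)+b(a-c)
= \frac{2(\mu-1)ab-(a^{2}+b^{2})}{\mu}.
\]
This quantity is negative as soon as $a/b+b/a>2(\mu-1)$. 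Now let $G$ be the double star with $N$ pendant vertices at $v_0$ and two pendant vertices at $v_1$. For this graph one computes $a/b=\mu^{2}-3$, and the characteristic equation $\mu^{4}-(N+4)\mu^{2}+3N+2=0$ gives $\mu^{2}\approx N+1$. Hence $a/b+b/a\approx N-2$ while $2(\mu-1)\approx 2\sqrt{N}$, so the boundary term is large and negative once $N\ge 10$, say. Yet $\lambda(G)>\lambda(G_{uv})$ does hold (one checks $\lambda^{2}\approx N+1+2/N>\mu^{2}\approx N+1+1/N$), so the theorem is true but your test vector fails to certify it: matching $\bm{y}$ at \emph{both} endpoints is not a good test vector when the eigenvector is highly unbalanced across the internal path. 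The same issue appears for every $k$ via the $\sinh$ formulas you set up---the coefficient of $a^{2}+b^{2}$ is $-\sinh\theta$, which always loses to $2ab\bigl(\sinh((k+1)\theta)-\sinh(k\theta)\bigr)$ when $a/b$ approaches its admissible extreme.

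To make this style of argument work, one must not pin the test vector at both endpoints. The classical approach (as in Hoffman--Smith or Cvetkovi\'{c}--Doob--Sachs) either rescales the eigenvector on one side of the path by a factor determined by the recurrence, or works with characteristic polynomials/transfer matrices along the path; in both cases the asymmetry between $a$ and $b$ is absorbed rather than left in the residue. As written, your step 3 asserts positivity that does not hold, so the proof is incomplete.
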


\begin{lemma}
    \label{lem-final}
    If $\ell < t$ are positive integers, then 
    \[ \lambda (C_{2t+1}(T_{n-2t,2})) < \lambda (C_{2\ell +1}(T_{n-2\ell ,2})). \]  
\end{lemma}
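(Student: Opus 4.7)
My plan is to establish strict monotonicity by induction on $t-\ell$, reducing to the single-step decrease
\[
\lambda(C_{2k+3}(T_{n-2k-2,2})) < \lambda(C_{2k+1}(T_{n-2k,2}))
\]
for each integer $k\geq 1$; telescoping from $k=\ell$ up to $k=t-1$ then yields the lemma.

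To prove this one-step inequality, I will start from $G:=C_{2k+1}(T_{n-2k,2})$ and transform it into $C_{2k+3}(T_{n-2k-2,2})$ via four spectral-radius-decreasing operations: two edge subdivisions of the cycle (lengthening it from $2k+1$ to $2k+3$), followed by the deletion of two vertices from the bipartite part. Label the cycle as $u_0u_1\cdots u_{2k}u_0$, with $u_0$ the identification vertex lying in the smaller part $A$ of $T_{n-2k,2}$, and write $B$ for the larger part. Since $u_0$ has degree $2+|B|\geq 3$ in $G$ while every other cycle vertex has degree exactly $2$, the closed walk $u_0,u_1,\ldots,u_{2k},u_0$ is an internal path of $G$ in the sense of Lemma~\ref{lem-HS}, and every cycle edge lies on it. Because $G$ has only one vertex of degree at least $3$, it is clearly not any $Y_n$-type graph.

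I will then apply Lemma~\ref{lem-HS} twice, first subdividing a cycle edge such as $u_1u_2$, and then subdividing a cycle edge of the resulting graph. The second application is legitimate because after the first subdivision $u_0$ remains the unique vertex of degree $\geq 3$ on the now length-$(2k+2)$ cycle, so the whole cycle is still an internal path and the $Y$-exclusion still holds. Each subdivision strictly shrinks the spectral radius, and the output is precisely $C_{2k+3}(T_{n-2k,2})$ on $n+2$ vertices. Since $n\geq 187k$ guarantees $|A|\geq 2$ and $|B|\geq 1$, I will next delete one vertex from $A\setminus\{u_0\}$ and one from $B$; each deletion strictly lowers the spectral radius because the graph stays connected and retains edges. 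A short parity check (whether $n-2k$ is even or odd) verifies that the new parts differ in size by at most one and that $u_0$ remains in the smaller part, so the bipartite portion becomes $T_{n-2k-2,2}$ and the final graph is exactly $C_{2k+3}(T_{n-2k-2,2})$.

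Combining the four strict decreases gives the one-step inequality, and the lemma follows. I expect the only delicate point to be verifying the Hoffman--Smith hypotheses for the second subdivision, which essentially amounts to observing that inserting a degree-$2$ vertex on the cycle creates no new branching and hence preserves the closed internal-path structure around $u_0$.
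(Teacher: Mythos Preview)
Your approach is essentially the same as the paper's---both combine Hoffman--Smith subdivision on the cycle with subgraph monotonicity on the bipartite part---though the paper organises it more cleanly: it subdivides the cycle of $C_{2\ell+1}(T_{n-2t,2})$ all $2(t-\ell)$ times at once to reach $C_{2t+1}(T_{n-2t,2})$, and then uses the single proper inclusion $C_{2\ell+1}(T_{n-2t,2})\subsetneq C_{2\ell+1}(T_{n-2\ell,2})$, avoiding your telescoping and your detour through graphs on $n+2$ vertices.

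One slip to fix: the sentence ``Because $G$ has only one vertex of degree at least $3$'' is false---every vertex of $T_{n-2k,2}$ has degree about $(n-2k)/2\gg 3$. Fortunately the needed conclusion $G\neq Y_n$ holds for a simpler reason: $Y_n$ is a tree, whereas $G$ contains a cycle. The same observation works after each subdivision.
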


\begin{proof} 
{Note that $C_{2t+1}(T_{n-2t,2})$ can be obtained from $C_{2\ell +1}(T_{n-2t ,2})$ by subdividing an edge of the odd cycle $C_{2\ell +1}$ $2(t- \ell)$ times.} Then 
    applying \Cref{lem-HS} yields 
    $\lambda (C_{2t+1}(T_{n-2t,2})) < 
    \lambda (C_{2\ell +1}(T_{n-2t,2})) < 
    \lambda (C_{2\ell+1}(T_{n-2\ell,2}))$, where the last inequality holds 
   since  
    $C_{2\ell+1}(T_{n-2t,2})$ is a proper subgraph of $C_{2\ell+1}(T_{n-2\ell,2})$, as desired.  
\end{proof}

Let $g(G)$ and 
$c(G)$ be the lengths of a shortest and longest cycle in $G$, respectively.  

\begin{lemma}[See \cite{BFG1998}]
\label{weakly-pancyclic}
 If $G$ is a non-bipartite graph with order $n$ and 
minimum degree $\delta (G)\ge \frac{n+2}{3}$, 
 {then $G$ contains  a cycle of every length between $g(G)$ and $c(G)$ with $g(G)\in\{3,4\}$.} 
\end{lemma}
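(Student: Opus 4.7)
The plan is to split the proof into two parts: first establishing the girth bound $g(G)\in\{3,4\}$, and then showing that the set of cycle lengths in $G$ forms the full interval from $g(G)$ to $c(G)$.

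For the girth bound, I would argue by contradiction. If $g(G)\ge 5$, then $G$ is simultaneously $C_3$-free and $C_4$-free, so any two vertices share at most one common neighbour. Double-counting paths of length two then yields
\[ \sum_{v\in V(G)}\binom{d(v)}{2}\le \binom{n}{2}. \]
Convexity together with $\delta(G)\ge (n+2)/3$ turns this into $n(n+2)(n-1)/18\le n(n-1)/2$, which forces $n\le 7$; the remaining finitely many small cases can be checked directly. Hence $g(G)\in\{3,4\}$.

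For the cycle-length interval, set $I(G):=\{\ell: C_\ell\subseteq G\}$. The endpoints $g(G)$ and $c(G)$ clearly belong to $I(G)$, so the task is to rule out gaps in $I(G)\cap[g(G),c(G)]$. I would fix a longest cycle $C$ of length $c(G)$ and argue by cases. If $C$ is Hamiltonian, then the min-degree condition $\delta(G)\ge (n+2)/3$ forces many chords of $C$, and a Bondy-type rotation-extension argument converts these chords into cycles of every length from $g(G)$ up to $n$. If $C$ is not Hamiltonian, then every $v\notin V(C)$ has at least $\delta(G)-(n-c(G)-1)$ neighbours on $C$, which is positive when $c(G)$ is comparable to $n$; analysing the gap-pattern of the neighbours of $v$ on $C$ then allows a splice $v\to v_i\to v_{i+1}\to\cdots\to v_j\to v$ that realises cycles of various intermediate lengths. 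Iterating these splices fills in all remaining lengths in $[g(G),c(G)]$.

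The main obstacle is the case in which $c(G)$ is small and $V(G)\setminus V(C)$ is large, so that a single outside vertex may not have enough neighbours on $C$ to perform a single-vertex splice. There one must exploit two or more off-cycle vertices simultaneously, using the hypothesis $\delta(G)\ge (n+2)/3$ to guarantee short paths in $V(G)\setminus V(C)$ that connect different points of $C$ with controlled length; this is precisely where the $(n+2)/3$ threshold is sharp, and where the non-bipartite assumption is needed to prevent $G$ from degenerating into a bipartite graph with no odd cycles to anchor the splicing argument.
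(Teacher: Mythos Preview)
The paper does not prove this lemma at all; it is quoted verbatim from Brandt--Faudree--Goddard \cite{BFG1998} as a known result, so there is no in-paper argument to compare your proposal against.

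On the merits of your sketch: the girth bound is fine. The double-counting of $P_3$'s under a $C_3,C_4$-free hypothesis, combined with convexity and $\delta(G)\ge(n+2)/3$, does force $n\le 7$, and the residual small cases are routine. The second half, however, is only an outline. The actual Brandt--Faudree--Goddard proof of weak pancyclicity under $\delta(G)\ge(n+2)/3$ is several pages long and proceeds through a sequence of structural lemmas about chords of a longest cycle and carefully controlled path-insertions; the difficulties you yourself flag (short circumference, large off-cycle vertex set, the need to use two or more off-cycle vertices simultaneously) are exactly the places where genuine work is required and where your write-up stops. In particular, ``a Bondy-type rotation-extension argument converts these chords into cycles of every length'' is an assertion, not an argument: rotation-extension typically produces a \emph{longer} cycle or a Hamilton path, not a cycle of each prescribed intermediate length, and getting every length in $[g(G),c(G)]$ needs a separate mechanism. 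Likewise the splicing paragraph does not explain how a single off-cycle vertex with $r$ neighbours on $C$ yields \emph{all} cycle lengths in some interval rather than just $r$ specific lengths, nor how to patch the intervals together without gaps. As it stands, the second part is a plausible plan with explicitly acknowledged holes, not a proof.
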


\begin{lemma}[See \cite{EG1959}] \label{Erdos-Gallai}
If $G$ is an $n$-vertex graph with more than 
$\frac{1}{2}t(n-1)$ edges, then $G$ contains a cycle with length at least $t+1$, i.e., $c(G)\ge t+1$. 
\end{lemma}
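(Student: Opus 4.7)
I plan to prove the contrapositive: if $c(G)\le t$, then $e(G)\le \tfrac{1}{2}t(n-1)$. The overall strategy is strong induction on $n$, combined with a reduction to the $2$-connected case and a classical Dirac-type lower bound on the circumference of $2$-connected graphs. The base case $n\le t$ is immediate, since $e(G)\le \binom{n}{2}\le \tfrac{1}{2}t(n-1)$.

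First I would carry out the standard reducibility steps. If $G$ is disconnected with components of orders $n_1,\ldots ,n_s$, each component satisfies $c(G_i)\le t$, so the inductive hypothesis applied componentwise yields
$e(G)=\sum_i e(G_i)\le \sum_i \tfrac{1}{2}t(n_i-1)=\tfrac{1}{2}t(n-s)\le \tfrac{1}{2}t(n-1)$.
If $G$ is connected but has a cut vertex $v$, decompose $G=G_1\cup G_2$ with $V(G_1)\cap V(G_2)=\{v\}$ and $|V(G_i)|=n_i$, so that $n_1+n_2=n+1$; each $G_i$ still satisfies $c(G_i)\le t$, and summing the inductive bounds gives $e(G)\le \tfrac{1}{2}t(n_1-1)+\tfrac{1}{2}t(n_2-1)=\tfrac{1}{2}t(n-1)$.

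Next, assume $G$ is $2$-connected with $n>t$. Here I would invoke Dirac's classical theorem asserting that every $2$-connected graph satisfies $c(G)\ge \min\{2\delta(G),n\}$. Since $c(G)\le t<n$, this forces $c(G)\ge 2\delta(G)$, so $\delta(G)\le t/2$. Choose a vertex $v$ with $d(v)=\delta(G)\le \lfloor t/2\rfloor$; the graph $G-v$ still satisfies $c(G-v)\le t$, so by the inductive hypothesis $e(G-v)\le \tfrac{1}{2}t(n-2)$. Consequently
$e(G)\le \tfrac{1}{2}t(n-2)+\lfloor t/2\rfloor\le \tfrac{1}{2}t(n-2)+\tfrac{t}{2}=\tfrac{1}{2}t(n-1)$,
which closes the induction.

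The main obstacle in this plan is the Dirac-type circumference bound $c(G)\ge \min\{2\delta(G),n\}$ for $2$-connected graphs. If I needed to prove this from scratch rather than quote it, I would take a longest path $P=v_0v_1\cdots v_p$ in $G$ and observe that by maximality both $N(v_0)$ and $N(v_p)$ are contained in $V(P)$. Using $2$-connectedness to guarantee two internally disjoint $v_0$--$v_p$ connections through $P$, together with the endpoint degree bounds $d(v_0),d(v_p)\ge \delta$, a standard P\'osa-type rotation argument extracts a cycle of length at least $\min\{2\delta,n\}$, yielding exactly the circumference bound needed.
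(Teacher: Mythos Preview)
Your argument is correct and is in fact one of the standard modern proofs of the Erd\H{o}s--Gallai circumference bound: reduce to the $2$-connected case by decomposing along components and cut vertices, then apply Dirac's inequality $c(G)\ge \min\{2\delta(G),n\}$ to find a vertex of degree at most $t/2$ and delete it inductively. All the bookkeeping (the $n_1+n_2=n+1$ count at a cut vertex, the inequality $\lfloor t/2\rfloor\le t/2$) is handled correctly.

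There is nothing to compare against, however: the paper does not give its own proof of this lemma. It is stated in the preliminaries section as a classical result with a citation to the original 1959 Erd\H{o}s--Gallai paper, and is then used as a black box (together with the weakly-pancyclic lemma of Brandt--Faudree--Goddard) inside the proof of Lemma~\ref{V_1V_2}. So your proposal supplies strictly more than the paper does here; the only remark is that the Dirac circumference bound you invoke is itself a nontrivial classical input, and your sketch of its proof via longest-path endpoints and P\'osa rotation is the right outline but would need a bit more care to execute in full.
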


\section{Proof of Theorem \ref{thm-odd-bi-stable}}
In this section, {we first give an approximate structure description of the almost-extremal graphs (see Lemmas \ref{lem-bound-L} -- \ref{V_1V_2}).} 
For notational convenience, we denote 
\[ \mathcal{C}_{\ell,k}:=\{C_3,\ldots,C_{2\ell-1},C_{2k+1}\}.\]  
Suppose that $G$ attains the maximum spectral radius among all $n$-vertex  $\mathcal{C}_{\ell,k}$-free non-bipartite graphs. Then $\lambda (G)\ge \lambda (C_{2\ell +1}(T_{n-2\ell ,2}))$. 
Our goal is to show that $G=C_{2\ell +1}(T_{n-2\ell ,2})$. 
Clearly, $G$ is connected. So $\bm{x} \in \mathbb{R}^n$ is a positive eigenvector of $\lambda(G)$. 
By scaling, we may assume that $\max \{x_i:i\in V(G)\}=1$ and $z\in V(G)$ is a vertex with $x_z=1$. If there are several such vertices, then we choose any one of them. 
Since $G$ is non-bipartite, we know that $G$ contains an odd cycle. Let $C=u_1\ldots u_{2t+1}u_1$ be a shortest odd cycle in $G$. Next, we prove that $\ell\leq t\leq k-1$.

\begin{lemma}\label{lem-cycle-length}
   We have $\ell\leq t\leq k-1$. 
\end{lemma}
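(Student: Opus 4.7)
The plan is to handle the two bounds on $t$ separately. The lower bound $t\ge\ell$ is immediate: since $G$ is $\{C_3,C_5,\ldots,C_{2\ell-1}\}$-free, every odd cycle of $G$ has length at least $2\ell+1$, and in particular the shortest one does. Likewise, the equality $t=k$ is excluded on the spot by the $C_{2k+1}$-freeness of $G$.

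The substantive step is to rule out $t\ge k+1$. Suppose for contradiction that $t\ge k+1$; then the shortest odd cycle of $G$ has length at least $2k+3\ge 5$, so $G$ is $\{C_3,C_5,\ldots,C_{2k+1}\}$-free and, in particular, triangle-free. The edge-density stability bound recorded in~(\ref{eq-3-5-2k+1}) (applicable because $G$ is non-bipartite and $n\ge 187k\ge 2k+3$) gives
\[
e(G)\;\le\;\left\lfloor \frac{(n-2k+1)^2}{4}\right\rfloor + 2k-1,
\]
and applying Nosal's inequality (\Cref{lem-Nosal}) to the triangle-free graph $G$ yields
\[
\lambda(G)\;\le\;\sqrt{e(G)}\;\le\;\sqrt{\left\lfloor (n-2k+1)^2/4\right\rfloor + 2k-1}.
\]

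It then suffices to show that this upper bound is strictly smaller than $\lambda(C_{2\ell+1}(T_{n-2\ell,2}))$, which would contradict the extremality hypothesis $\lambda(G)\ge\lambda(C_{2\ell+1}(T_{n-2\ell,2}))$. Since $C_{2\ell+1}(T_{n-2\ell,2})$ properly contains $T_{n-2\ell,2}$, I have $\lambda(C_{2\ell+1}(T_{n-2\ell,2}))>\lambda(T_{n-2\ell,2})=\sqrt{\lfloor (n-2\ell)/2\rfloor\lceil (n-2\ell)/2\rceil}$. The desired comparison then reduces, via a difference-of-squares computation, to the arithmetic inequality
\[
(2k-1-2\ell)(2n-2\ell-2k+1)\;>\;8k-3,
\]
and since $\ell\le k-1$ forces $2k-1-2\ell\ge 1$, this follows from $n>5k+\ell-2$, which is comfortably subsumed by $n\ge 187k$.

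The hard part will be only the careful bookkeeping of the floor and ceiling corrections in the arithmetic comparison; fortunately the hypothesis $n\ge 187k$ leaves enormous slack, so no delicate numerical work is needed. In particular, no appeal to the deeper spectral extremal result \Cref{thm-LG-LSY} is required here---the density stability in~(\ref{eq-3-5-2k+1}) combined with Nosal's inequality already suffices to close the argument.
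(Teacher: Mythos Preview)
Your proof is correct and takes a genuinely different route from the paper's. The paper dispatches the case $t\ge k+1$ by invoking the spectral extremal result \Cref{thm-LG-LSY} directly: since $G$ is then $\{C_3,\ldots,C_{2k+1}\}$-free, non-bipartite, and has shortest odd cycle of length $\ge 2k+3$ (so $G\neq S_{2k-1}(T_{n-2k+1,2})$), that theorem yields $\lambda(G)\le\lambda(S_{2k-1}(T_{n-2k+1,2}))<\Delta(S_{2k-1}(T_{n-2k+1,2}))\le\lceil (n-2k+1)/2\rceil$, and then the paper compares this ceiling with $\lfloor(n-2\ell)/2\rfloor<\lambda(C_{2\ell+1}(T_{n-2\ell,2}))$ using only $\ell\le k-1$. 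Your approach instead combines the edge-density stability bound~(\ref{eq-3-5-2k+1}) with Nosal's inequality (\Cref{lem-Nosal}), trading the heavier spectral input \Cref{thm-LG-LSY} for two more elementary tools. Both arguments are short; yours is more self-contained in that it avoids relying on the deeper spectral characterisation, while the paper's version sidesteps the floor/ceiling bookkeeping entirely by comparing degree-type bounds rather than edge counts.
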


\begin{proof}
{ 
Clearly, we have $\ell \le t$ and $t\neq k$. 
    If $t\ge k+1$, then $G$ is $\mathcal{C}_{\ell ,k}$-free. Theorem \ref{thm-LG-LSY} gives 
    $$\lambda(G)\leq \lambda(S_{2k-1}(T_{n-2k+1,2})) < \Delta(S_{2k-1}(T_{n-2k+1,2}))
        \leq \left\lceil\frac{n-2k+1}{2} \right\rceil.$$
    Moreover, we have $\lambda(C_{2\ell+1}(T_{n-2\ell ,2}))>\lfloor\frac{n-2\ell}{2}\rfloor$. Since $\ell\leq k-1$, we get 
    \begin{align*}
        \lambda(G)\leq \left\lceil\frac{n-2k+1}{2} \right\rceil 
        \leq \left\lfloor\frac{n-2\ell}{2} \right\rfloor 
         <\lambda(C_{2\ell+1}(T_{n-2\ell ,2})),
    \end{align*}
     contradicting with the assumption. This completes the proof.} 
\end{proof}

\begin{lemma}\label{lem-graph-size}
  We have $\lambda (G) > \lfloor \frac{n-2\ell}{2}\rfloor > \frac{n}{2} -k$ and 
  $e(G)\geq \frac{n^2}{4}-kn+k$.
\end{lemma}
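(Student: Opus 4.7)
The plan is to establish the spectral lower bound by a direct comparison of subgraphs, and then obtain the edge count by splitting on whether $G$ is triangle-free, applying Nosal's lemma in one case and the Brualdi--Hoffman--Tur\'{a}n type estimate \Cref{lem-ZLS} in the other.

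For the first chain of inequalities, I would begin from the spectral extremality of $G$ to get $\lambda(G)\ge \lambda(C_{2\ell+1}(T_{n-2\ell,2}))$. Since $C_{2\ell+1}(T_{n-2\ell,2})$ is connected and contains $T_{n-2\ell,2}$ as a proper subgraph (obtained by deleting the $2\ell$ cycle vertices not shared with $T_{n-2\ell,2}$), the Perron--Frobenius theorem yields
$$\lambda(C_{2\ell+1}(T_{n-2\ell,2})) > \lambda(T_{n-2\ell,2}) = \sqrt{\lfloor(n-2\ell)/2\rfloor\lceil(n-2\ell)/2\rceil}\ge \lfloor(n-2\ell)/2\rfloor.$$
The comparison $\lfloor(n-2\ell)/2\rfloor > n/2 - k$ is then routine: by \Cref{lem-cycle-length}, $\ell \leq k-1$, so $\lfloor(n-2\ell)/2\rfloor = \lfloor n/2\rfloor - \ell \ge (n-1)/2 - (k-1) = n/2 - k + 1/2$.

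For the edge count, set $a:=\lceil(n-2\ell)/2\rceil$ and $b:=\lfloor(n-2\ell)/2\rfloor$, so that the chain above actually gives the sharper bound $\lambda(G)>\sqrt{ab}$. If $\ell\ge 2$, then $G$ is triangle-free and Nosal's inequality (\Cref{lem-Nosal}) gives $e(G)\ge \lambda(G)^2 > ab = \lfloor(n-2\ell)^2/4\rfloor$, which comfortably exceeds $n^2/4-kn+k$ for $n\ge 187k$ (the gap being of order $n+k^2-3k$). If instead $\ell=1$, so that $G$ is $C_{2k+1}$-free with $k\ge 2$, I would apply \Cref{lem-ZLS} with its parameter set to $k$: since $2k+1\le 2k+2$ and $G$ lacks $C_{2k+1}$, the contrapositive rearranges to $e(G)\ge \lambda(G)^2 - (2k-1)\lambda(G)/2$. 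Monotonicity of $s\mapsto s^2-(2k-1)s/2$ for $s>(2k-1)/4$ combined with $\lambda(G)>\sqrt{ab}$ then yields $e(G)>ab-(2k-1)\sqrt{ab}/2\ge ab-(2k-1)(n-2)/4$, and a short algebraic check shows this exceeds $n^2/4-kn+k$ by $\frac{(2k-3)n+O(1)}{4}$, which is positive for $k\ge 2$ and $n\ge 187k$.

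The main obstacle is the borderline case $\ell=1$, $k=2$: Nosal is unavailable, and the bound in \Cref{lem-ZLS} loses a term linear in $\lambda$, so that working merely with $\lambda(G)>\lfloor(n-2)/2\rfloor$ is insufficient. Replacing this crude lower bound by the sharper $\lambda(G)>\sqrt{ab}$ is indispensable in order to recover the required edge count, since the slack reduces to only $n/4+O(1)$ in this case.
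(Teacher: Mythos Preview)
Your argument follows the same two–case split as the paper: the spectral lower bound via the subgraph $T_{n-2\ell,2}\subset C_{2\ell+1}(T_{n-2\ell,2})$, then Nosal's inequality for $\ell\ge 2$ and the contrapositive of \Cref{lem-ZLS} for $\ell=1$. The paper feeds only the cruder bound $\lambda(G)>\lfloor(n-2)/2\rfloor$ into \Cref{lem-ZLS} (and applies it with coefficient $k-1$ rather than the $k-\tfrac12$ appearing in the lemma statement), which already suffices for its arithmetic; your use of the sharper $\lambda(G)>\sqrt{ab}$ is a clean way to handle the $k=2$, $n$ odd borderline independently of that discrepancy, but otherwise the two proofs are the same.
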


\begin{proof}
We know from the assumption that 
\begin{equation*}
    \label{eq-lower-bound}
    \lambda (G)\geq  \lambda (C_{2\ell +1}(T_{n-2\ell ,2})) > \left\lfloor \frac{n-2\ell}{2}\right\rfloor > \frac{n}{2}-k. 
\end{equation*} 
    In the case $\ell \ge 2$, the assumption implies that $G$ is triangle-free. So Lemma \ref{lem-Nosal} gives $e(G)\ge \lambda^2(G) \ge \frac{n^2}{4} - kn + k^2$.  
    Next, we consider the case $\ell=1$. 
    Then $G$ forbids a single odd cycle $C_{2k+1}$ and $\lambda (G)\ge \lambda (C_3(T_{n-2,2}))> \lfloor \frac{n-2}{2} \rfloor$.
    We obtain from 
    Lemma \ref{lem-ZLS} that 
    $$\left\lfloor \frac{n-2}{2} \right\rfloor < \lambda(G)< \frac{k- 1+\sqrt{4m+(k- 1)^2}}{2}.$$
    It follows that 
    $$m > \frac{n^2}{4}-\frac{1}{2}(k+2)n + \frac{3(2k+1)}{4}\geq \frac{n^2}{4}-kn+k. 
    \qedhere $$ 
\end{proof}

We remark that there is another way to get a lower bound on $e(G)$ by a well-known inequality $e(G)\ge \lambda^2(G)- \frac{3t(G)}{\lambda (G)}$, where $t(G)$ denotes the number of triangles in $G$. We can see that 
every $C_{2k+1}$-free graph $G$ satisfies $t(G)=\frac{1}{3}\sum_{v\in V} e(N(v))  \le \frac{1}{3}\sum_{v\in V} kd(v) \le \frac{2}{3}km \le \frac{1}{6}kn^2$. 

\begin{lemma}\label{lem-bound-L}
   Let $L:=\{v\in V(G): d(v)\leq\frac{n}{2} -\frac{5}{4}\sqrt{kn}\}$. Then $|L| \le  \sqrt{kn}$.
\end{lemma}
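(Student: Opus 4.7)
The plan is to prove $|L|\le\sqrt{kn}$ by a direct two-sided estimate on $e(G)$. Let $s=|L|$ and $\alpha=n/2-\tfrac{5}{4}\sqrt{kn}$. Summing $d(v)\le\alpha$ over $v\in L$ yields $\sum_{v\in L}d(v)\le s\alpha$, and since this sum equals $2e(G[L])+e(L,V\setminus L)$, we obtain $e(G[L])+e(L,V\setminus L)\le s\alpha$. For $\ell\ge 2$, the graph $G$ is triangle-free (since $C_3\in\mathcal{C}_{\ell,k}$), so Mantel's theorem applied to $G[V\setminus L]$ gives $e(G[V\setminus L])\le (n-s)^2/4$; the case $\ell=1$, where triangles are allowed, can be handled by a parallel argument invoking Lemma~\ref{lem-ZLS} on the subgraph. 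Summing,
\[
e(G)\;\le\;s\alpha+\frac{(n-s)^2}{4}\;=\;\frac{n^2}{4}-\frac{5s\sqrt{kn}}{4}+\frac{s^2}{4}.
\]

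Combining with the lower bound $e(G)\ge n^2/4-kn+k$ from Lemma~\ref{lem-graph-size} gives the quadratic inequality
\[
s^2-5s\sqrt{kn}+4kn-4k\;\ge\;0,
\]
whose discriminant is $9kn+16k$. A short computation shows that for $n\ge 187k$, the smaller root $(5\sqrt{kn}-\sqrt{9kn+16k})/2$ is strictly less than $\sqrt{kn}$; substituting $s=\sqrt{kn}$ into the quadratic gives $-4k<0$, confirming that $\sqrt{kn}$ lies strictly between the two roots. Hence if $s$ falls in the lower branch we are done.

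The main technical obstacle is the complementary branch $s\gtrsim 4\sqrt{kn}$, which the quadratic alone does not forbid. I plan to rule it out via a spectral/structural argument. Using the Perron eigenvector $\bm{x}$ normalized so $\max_v x_v=x_z=1$, the eigenvalue equation $\lambda x_v=\sum_{u\sim v} x_u$ gives $x_v\le d(v)/\lambda\le\alpha/\lambda<1$ for every $v\in L$. Under triangle-freeness one also has $\lambda^2\le d(z)(n-d(z))$, which combined with $\lambda\ge n/2-k$ forces $d(z)\in[n/2-\sqrt{kn},\,n/2+\sqrt{kn}]$. Consequently every neighbor of $z$ has degree at most $n-d(z)\lesssim n/2+\sqrt{kn}$, and a maximum-degree vertex $v^{*}$ in $V\setminus L$ must satisfy $N(v^{*})\subseteq L$ because each neighbor would otherwise violate triangle-freeness with $v^{*}$ and $z$. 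This yields $d(v^{*})\le s$, which combined with the average-degree lower bound on $V\setminus L$ coming from $e(G)\ge n^2/4-kn+k$ and $|V\setminus L|\le n-4\sqrt{kn}$, forces $s$ to be of order $n$. Feeding this back into the Rayleigh-quotient estimate $\lambda\sum x_v=\sum d(v)x_v$ together with $x_v\le\alpha/\lambda$ for $v\in L$ then yields $\lambda(G)<\lambda(C_{2\ell+1}(T_{n-2\ell,2}))$, contradicting the extremality of $G$. Hence $s\le\sqrt{kn}$, as required.
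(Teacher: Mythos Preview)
Your first estimate $e(G)\le s\alpha+(n-s)^2/4$ is fine for $\ell\ge 2$, and the resulting quadratic $s^2-5s\sqrt{kn}+4kn-4k\ge 0$ is correct. But the crux is precisely the branch $s\gtrsim 4\sqrt{kn}$, and the argument you sketch to eliminate it does not work. The assertion that a maximum-degree vertex $v^*\in V\setminus L$ must satisfy $N(v^*)\subseteq L$ ``because each neighbor would otherwise violate triangle-freeness with $v^*$ and $z$'' is unjustified: a triangle on $\{v^*,w,z\}$ would require both $v^*\sim z$ and $w\sim z$, and neither follows from $w\notin L$. The subsequent chain ($d(v^*)\le s$, then $s$ of order $n$, then a Rayleigh-quotient contradiction) is built on this false step and collapses with it. Moreover, for $\ell=1$ the graph may contain triangles, so neither Mantel nor the bound $\lambda^2\le d(z)(n-d(z))$ is available; Lemma~\ref{lem-ZLS} is a spectral statement about the whole graph and does not give the edge bound on $G[V\setminus L]$ you need.

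The paper's proof avoids the two-branch difficulty by a device you are missing: rather than work with all of $L$, assume for contradiction that $|L|>\sqrt{kn}$ and pick a subset $S\subseteq L$ of size \emph{exactly} $\lceil\sqrt{kn}\rceil$. Then
\[
e(G-S)\ \ge\ e(G)-\sum_{v\in S}d(v)\ >\ \frac{n^2}{4}-kn+k-\sqrt{kn}\Bigl(\frac{n}{2}-\frac{5}{4}\sqrt{kn}\Bigr)\ >\ \frac{(n-\lceil\sqrt{kn}\rceil)^2}{4},
\]
and since $n-\lceil\sqrt{kn}\rceil>4k$, Lemma~\ref{lem-Bon-Woo} (Bondy--Woodall) produces a $C_{2k+1}$ in $G-S\subseteq G$, contradicting $C_{2k+1}$-freeness. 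Fixing $|S|$ instead of letting $s=|L|$ float is exactly what kills the second root of your quadratic, and because the contradiction uses only $C_{2k+1}$-freeness (not triangle-freeness), the cases $\ell=1$ and $\ell\ge 2$ are handled uniformly.
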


\begin{proof}
    Suppose on the contrary that $|L|> \sqrt{kn}$.  Then there exists a subset $S \subseteq L $ such that $|S|=\lceil \sqrt{kn}\rceil$. By Lemma \ref{lem-graph-size}, we have
    \begin{align*}
e(G- S)& \geq e(G)-\sum_{v\in S}d_{G}(v)  \\
&> \frac{n^2}{4} - kn+k-\sqrt{kn}
\left(\frac{n}{2} -\frac{5}{4}\sqrt{kn} \right) \\
&>\frac{(n-\lceil\sqrt{kn}\rceil)^2}{4}.
\end{align*}
Since $n\ge 9k$, we have $|V(G-S)|=n-\lceil\sqrt{kn}\rceil>4k$. We know from Lemma \ref{lem-Bon-Woo} that the induced subgraph 
$G- S$ contains a copy of $C_{2k+1}$, a contradiction.
\end{proof}

From Lemma \ref{lem-bound-L}, we know that we can remove at most $\sqrt{kn}$ vertices from $G$ to obtain a large $C_{2k+1}$-free subgraph $G-L$ with large minimum degree. From Theorem \ref{thm-YP2024}, we can see that $G-L$ is bipartite under a slightly stronger condition $n\ge 21000k$.  
In what follows, we weaken this bound and apply Lemmas \ref{weakly-pancyclic} and \ref{Erdos-Gallai} to show that the subgraph $G-L$ is bipartite for every $n\ge 187k$. Hence, we obtain an approximate structure of the spectral extremal graph.   

\begin{lemma}\label{V_1V_2} 
Each vertex of $G-L$ has at least $\frac{1}{2}n-\frac{9}{4}\sqrt{kn}$ neighbors, and $G-L$ is a bipartite graph. {Assume that $V(G-L)=V_1\sqcup V_2$, where $V_1$ and $V_2$ are partite sets of $G-L$}. Then 
\[ \frac{1}{2}n-\frac{9}{4}\sqrt{kn}\leq|V_1|,|V_2|\leq \frac{1}{2}n+\frac{9}{4}\sqrt{kn}. \] 
\end{lemma}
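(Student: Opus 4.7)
The plan is to proceed in three steps, using $|L|\le \sqrt{kn}$ from \Cref{lem-bound-L}, the pancyclic-type structure of \Cref{weakly-pancyclic}, and Erd\H{o}s--Gallai (\Cref{Erdos-Gallai}). Since the lemma is part of the proof of case (b) of \Cref{thm-odd-bi-stable}, we have $\ell \le k-1$, so $k\ge 2$ and $n\ge 187k\ge 374$; this bound is what will make the numerical estimates below work.

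First I would establish the minimum degree bound. For any $v\in V(G)\setminus L$ the definition of $L$ gives $d_G(v)> n/2 - (5/4)\sqrt{kn}$, and deleting $L$ removes at most $|L|\le \sqrt{kn}$ neighbours of $v$, so
\[ d_{G-L}(v) \;\ge\; d_G(v)-|L| \;>\; \frac{n}{2}-\frac{5}{4}\sqrt{kn}-\sqrt{kn} \;=\; \frac{n}{2}-\frac{9}{4}\sqrt{kn}. \]

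To show bipartiteness I would argue by contradiction: assume $G-L$ is non-bipartite and let $H$ be a non-bipartite component. Every vertex of $H$ keeps all its $(G-L)$-neighbours inside $H$, so $\delta(H)\ge \delta(G-L)\ge n/2-(9/4)\sqrt{kn}$, while $|V(H)|\le n$. Using $\sqrt{kn}\le n/\sqrt{187}$, a short calculation gives $\delta(H)\ge (n+2)/3\ge (|V(H)|+2)/3$, so \Cref{weakly-pancyclic} applies to $H$: it contains cycles of every length between $g(H)$ and $c(H)$, with $g(H)\in\{3,4\}$. Meanwhile, the handshake formula combined with the same min-degree bound gives $e(H)\ge |V(H)|\delta(H)/2 > k(|V(H)|-1)$ once $n\ge 187k$, hence \Cref{Erdos-Gallai} yields $c(H)\ge 2k+1$. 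Since $k\ge 2$ implies $2k+1\ge 5\ge g(H)$, the integer $2k+1$ lies in $[g(H),c(H)]$, so $H$ (hence $G$) contains a copy of $C_{2k+1}$, contradicting $\mathcal{C}_{\ell,k}$-freeness.

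Finally, once $G-L$ is known to be bipartite with parts $V_1,V_2$, every $v\in V_1$ has all of its $\ge n/2-(9/4)\sqrt{kn}$ neighbours inside $V_2$, giving $|V_2|\ge n/2-(9/4)\sqrt{kn}$ and symmetrically the same bound for $|V_1|$; the matching upper bounds $|V_i|\le n/2+(9/4)\sqrt{kn}$ then follow from $|V_1|+|V_2|\le n$. The main obstacle I foresee is the numerical verification $\delta(H)\ge (n+2)/3$: the threshold $n\ge 187k$ is essentially calibrated to make this inequality hold with the slim slack available at $k=2$, $n=374$, which is precisely where the constant $187$ becomes tight.
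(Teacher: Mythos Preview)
Your proof is correct and follows essentially the same route as the paper. Both arguments establish the minimum-degree bound from the definition of $L$ and $|L|\le\sqrt{kn}$, then combine \Cref{weakly-pancyclic} with \Cref{Erdos-Gallai} to force a $C_{2k+1}$ if $G-L$ were non-bipartite, and finally read off the part sizes from the degree bound. The only cosmetic differences are that the paper applies both lemmas directly to $G-L$ rather than to a non-bipartite component, and it invokes \Cref{Erdos-Gallai} via the min-degree corollary $c(G-L)\ge \delta(G-L)+1$ (which already exceeds $2k+1$) instead of your edge-count estimate $e(H)>k(|V(H)|-1)$; neither change affects the substance.
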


\begin{proof}
    By definition, we have $d(v)> \frac{1}{2}n-\frac{5}{4}\sqrt{kn}$ for every $v\in V(G-L)$. Since $|L|\leq \sqrt{kn}$, we get $d_{G-L}(v)> \frac{1}{2}n-\frac{9}{4}\sqrt{kn}$. Since $n\geq 187k$, it follows that $\delta(G-L)>\frac{1}{2}n-\frac{9}{4}\sqrt{kn}\geq \frac{n+2}{3}$.

    We claim that $G-L$ is bipartite. 
    Assume on the contrary that $G-L$ is non-bipartite. 
    Then by \Cref{Erdos-Gallai},
    we know that $G-L$ contains a cycle of length at least  $\delta(G-L)+1> \frac{1}{2}n-\frac{9}{4}\sqrt{kn}\geq 2k+1$ since $n\geq 40k$. 
    The weakly pancyclicity in \Cref{weakly-pancyclic} implies that
 $G-L$ contains all cycles with lengths  between $4$ and $2k+1$, which leads to a contradiction. Thus, $G-L$ must be a bipartite graph. 
 
 Let $V(G-L):= V_1\sqcup V_2$ be a bipartition  of vertices of $G-L$. Recall that $d_{G-L}(v)> \frac{1}{2}n-\frac{9}{4}\sqrt{kn}$ for each $v\in V(G-L)$. The neighbors of $v\in V_1$ must lie in another vertex part $V_2$.
 So we have 
 $\frac{1}{2}n-\frac{9}{4}\sqrt{kn}\leq|V_1|,|V_2|\leq \frac{1}{2}n+\frac{9}{4}\sqrt{kn}$, as needed. 
\end{proof}

The next lemma gives $|L|<2k$, which 
refines the bound obtained from Lemma \ref{lem-bound-L}. 

\begin{lemma} \label{lem-L-2k}
   We have $L\subseteq V(C)$ and $|L|< 2k$. 
\end{lemma}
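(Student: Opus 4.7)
Since $|V(C)| = 2t+1 \le 2k-1$ by \Cref{lem-cycle-length}, the bound $|L| < 2k$ follows immediately from $L \subseteq V(C)$. Hence, the entire content of the lemma is to prove $L \subseteq V(C)$. First, $V(C) \cap L \ne \emptyset$ is automatic: by \Cref{V_1V_2} the graph $G - L$ is bipartite, so the odd cycle $C$ must use at least one vertex of $L$.

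To establish the reverse containment, suppose for contradiction that there exists $v \in L \setminus V(C)$. My plan is to construct a graph $G'$ on the same vertex set $V(G)$, still $\mathcal{C}_{\ell,k}$-free and non-bipartite, which satisfies $\lambda(G') > \lambda(G)$, contradicting the extremality of $G$. The candidate is a ``twin construction'': let $z$ be a vertex with $x_z = 1$, which necessarily lies in $V_1 \cup V_2$ (any $v \in L$ has $\lambda(G)\, x_v = \sum_{w \in N_G(v)} x_w \le d(v) \le n/2 - \tfrac{5}{4}\sqrt{kn} < \lambda(G)$, giving $x_v < 1$), and set
\[
G' := \bigl(G - \{vw : w \in N_G(v)\}\bigr) \cup \{vw : w \in N_G(z) \setminus \{v\}\},
\]
so that $v$ and $z$ become non-adjacent twins in $G'$. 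Since $v \notin V(C)$, the cycle $C$ is preserved, hence $G'$ is non-bipartite. A Rayleigh-quotient computation using the Perron vector $\bm{x}$ of $G$ gives
\[
\bm{x}^{\top} A(G')\bm{x} - \bm{x}^{\top} A(G)\bm{x} \;\ge\; 2\lambda(G)\, x_v (x_z - x_v) - 2 x_v^2,
\]
where the $-2x_v^2$ absorbs the possible removal of the edge $vz$. Combining $x_z = 1 > x_v$ with the crude estimate $x_v \le d(v)/\lambda(G)$, $d(v) \le n/2 - \tfrac{5}{4}\sqrt{kn}$ and $\lambda(G) \ge n/2 - k$, a short calculation valid for $n \ge 187k$ shows that the right-hand side is strictly positive, yielding $\lambda(G') > \lambda(G)$.

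The main obstacle, and the step I expect to be the most delicate, is verifying that $G'$ remains $\mathcal{C}_{\ell,k}$-free. Every new cycle in $G'$ passes through $v$, and since $v$ and $z$ are twins in $G'$ we may swap $v$ with $z$ to obtain a closed walk of the same length through $z$ in $G' - v = G - v \subseteq G$. If the resulting walk is a simple cycle, the $\mathcal{C}_{\ell,k}$-freeness of $G$ immediately delivers the contradiction. The delicate case is when the cycle through $v$ also passes through $z$, forcing the swap to repeat $z$; the plan is to decompose into two sub-cycles through $z$ in $G$ whose lengths sum to the original length and appeal to the $\mathcal{C}_{\ell,k}$-freeness of $G$ to constrain each sub-length. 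In boundary sub-cases where a would-be sub-cycle has length $2$, I plan to shortcut via a common neighbor $w \in N_{G'}(v) \cap N_{G'}(z) \subseteq N_G(z)$: the edge $wz$ in $G$, combined with the remaining portion of the cycle, produces a strictly shorter cycle through $z$ in $G$ whose length must be forbidden to close the argument. If such a shortcut still leaves a parity configuration uncovered, the fallback is to replace $z$ by a carefully chosen reference vertex $u^\ast \in V(C) \cap L$ with $x_{u^\ast} \ge x_v$ and apply \Cref{lem-WXH} to shift $v$'s edges onto $u^\ast$, thereby localising every newly created cycle around the already-present shortest odd cycle $C$ and making the forbidden-cycle analysis tractable.
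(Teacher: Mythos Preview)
Your Rayleigh-quotient estimate and the reduction of the lemma to $L\subseteq V(C)$ are both fine. The real problem is exactly where you flagged it: $\mathcal{C}_{\ell,k}$-freeness of the twin graph $G'$ is not established by your plan, and in fact the twin construction can fail when $\ell<k$.

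Concretely, suppose $G$ contains an odd cycle $z,a_1,\ldots,a_{2k-2},z$ of length $2k-1$ (this length is \emph{not} forbidden once $\ell\le k-1$), and let $b\in N_G(z)$ be any further neighbour of $z$ not on that cycle. Then in your $G'$ the sequence $v,a_1,\ldots,a_{2k-2},z,b,v$ is a $C_{2k+1}$: the edges $va_1$ and $vb$ are new (since $a_1,b\in N_G(z)=N_{G'}(v)\cup\{v\}$), while all other edges already lie in $G-v$. Your decomposition into paths $P_1,P_2$ from $v$ to $z$ gives $|P_1|=2k-1$ and $|P_2|=2$; the only honest sub-cycle you recover in $G$ is a $C_{2k-1}$, which is allowed. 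The ``shortcut via a common neighbour'' does nothing new here (it reproduces the same $C_{2k-1}$), and the fallback of shifting edges to some $u^\ast\in V(C)\cap L$ has two defects: you have no reason for $x_{u^\ast}\ge x_v$ (both are low-degree vertices in $L$), and adding neighbours to a vertex on the short odd cycle $C$ can itself create a $C_{2k+1}$.

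The paper's proof sidesteps all of this by choosing the new neighbourhood more carefully: it reconnects the bad vertex $w$ to \emph{all of} $V_2\setminus V(C)$ rather than to $N_G(z)$. The point is that any two vertices of $V_2\setminus V(C)$ have more than $2k$ common neighbours in $V_1$ (by the size estimates of \Cref{V_1V_2}), so whenever a forbidden cycle in $G'$ passes through $w$ with neighbours $v_1,v_2\in V_2\setminus V(C)$, one can always pick a replacement $w'\in N_{V_1}(v_1)\cap N_{V_1}(v_2)$ lying \emph{off} that cycle and obtain the same forbidden cycle in $G$. This common-neighbour swap is the missing idea in your argument; it replaces the single candidate $z$ (which may sit on the cycle) by an abundant supply of candidates, and it works uniformly for every $2b+1\in\{3,\ldots,2\ell-1,2k+1\}$.
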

\begin{proof}
    We may assume by scaling that $\max\{x_v:v\in V(G)\}=1$. Let $z\in V(G)$ be a vertex with $x_z=1$. It is easy to see that $z\notin L$. Otherwise, we get $\lambda(G) = \lambda (G)x_z = \sum_{w\in N(z)} x_w \le 
    d(z)\leq \frac{n}{2} -\frac{5}{4}\sqrt{kn}< \frac{n}{2} - k $, contradicting with Lemma \ref{lem-graph-size}. 
    Without loss of generality, we may assume that $z\in V_1$. 
    Then
    \begin{align*}
        \lambda(G)=\lambda(G)x_z =\sum_{v\in N(z)\cap L}x_v+\sum_{v\in N(z)\cap V_2}x_v 
        < |L|+\sum_{v\in V_2}x_v,
    \end{align*}
    which implies  that 
    \begin{equation}\label{equation 1}
        \sum_{v\in V_2}x_v>\lambda(G)-|L|.
    \end{equation}

    Now we are going to prove that $L\setminus V(C)=\emptyset$. Suppose on the contrary that there is a vertex $w\in L\setminus V(C)$. Then $d(w)\leq \frac{n}{2} -\frac{5}{4}\sqrt{kn}$. Let $G':=G-\{wv : v\in N(w)\}+\{wv : v\in V_2\setminus  V(C)\}$. 
    We claim that $G'$ is non-bipartite  and 
    $\mathcal{C}_{\ell,k}$-free. Otherwise, if $G'$ contains a copy of an odd cycle $C_{2b+1}\in \mathcal{C}_{\ell,k}$. From the construction of $G'$, we see that $w\in V(C')$, and $w$ has two neighbors $v_1,v_2\in V_2\setminus  V(C)$. For each $i\in \{1,2\}$, by Lemma \ref{V_1V_2}, we have 
    \[ d_{V_1}(v_i)\ge \left(\frac{n}{2}-\frac{9}{4}\sqrt{kn} \right) -2k.\]
    It follows that  
    $$|N_{V_1}(v_1)\cap N_{V_1}(v_2)| \ge 2\left(\frac{n}{2} - \frac{9}{4}\sqrt{kn} -2k \right) - 
    \left(\frac{n}{2} + \frac{9}{4}\sqrt{kn} \right)>2k.$$ 
    Consequently, we can find a vertex  $w'\in N(v_1)\cap N(v_2)\cap V_1$ such that 
    $w'\neq w$ and 
     $\big( V(C_{2b+1})\setminus \{w\} \big) \cup \{w'\}$ 
     forms a copy of $C_{2b+1}$ in the original graph $G$, a contradiction. 
    
    Since $n\geq 144k$ and $k\geq 2$, we have
\begin{align*}
    \lambda(G')-\lambda(G)&\geq\frac{{\bm{x}}^T \big(A(G')-A(G) \big){\bm{x}}}{{\bm{x}^T}{\bm{x}}} \\ 
    &=\frac{2x_{w}}{{\bm{x}}^T{\bm{x}}}\left( \sum_{u\in V_2\setminus  V(C)}x_u-\sum_{u\in N_G(w)}x_u \right)\\
    &\geq \frac{2x_{w}}{{\bm{x}^T}{\bm{x}}}
    \Big(\lambda(G)-|L|-(2k-1)-d(w) \Big)\\
    &> \frac{2x_{w}}{{\bm{x}^T}{\bm{x}}}
    \left(\frac{1}{4}\sqrt{kn}-3k \right)>0,
\end{align*}
which contradicts with the maximality of $G$. Then $L\subseteq V(C)$ and $|L|<2k$. 
\end{proof}
Recall that $C=u_1\ldots u_{2t+1}u_1$ is a shortest odd cycle in $G$. 
For simplicity, we denote 
\[ V_1':=V_1\setminus V(C) ~~\text{and}~~ 
V_2':=V_2\setminus V(C). \]  
By Lemmas \ref{V_1V_2} and \ref{lem-L-2k}, 
we know that $G-L$ is bipartite and $L \subseteq V(C)$. 
Then $G-V(C)$ is a bipartite graph with partite sets $V_1'$ and $V_2'$.  
We conclude that $V(G)=V_1'\sqcup V_2' \sqcup V(C)$ and   $e(V_1')=e(V_2')=0$. Note that 
$\delta(G-C) \ge \frac{1}{2}n - \frac{5}{4}\sqrt{kn} -2k$. 
Thus, we get 
\begin{equation}
    \label{eq-new-V12}
    \frac{1}{2}n-\frac{5}{4}\sqrt{kn}-2k\leq|V_1'|,|V_2'|\leq \frac{1}{2}n+\frac{5}{4}\sqrt{kn}+2k.
\end{equation}

\begin{lemma}\label{lem-pathlength}
Let $v,v'\in V_1'$ and $u,u'\in V_2'$. 
The following hold: 
\begin{itemize}
    \item[\rm (a)]  For every $h\in\{3,5,\ldots,2k-1\}$ and $s\in\{2,4,\ldots,2k-2\}$, there exist a path from $v$ to $u$ with length $h$ in  $G-C$, and a path from $v$ to $v'$ with length $s$ in $G-C$.
    
    \item[\rm (b)] For each $u_i\in V(C)$, we have $d_{V_1'}(u_i)\cdot d_{V_2'}(u_i)=0$.
\end{itemize}
\end{lemma}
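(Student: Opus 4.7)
The plan is to deduce part (b) from part (a) by a short forbidden-cycle argument, so the substantive work is entirely in part (a). The starting observation is that the induced bipartite subgraph $H := G-V(C) = G[V_1' \cup V_2']$ is extremely dense: combining \Cref{lem-L-2k} (which gives $L \subseteq V(C)$ with $|V(C)| = 2t+1 \le 2k-1$) with the defining bound of $L$, every vertex $x \in V(H)$ satisfies
\[
d_H(x) \;\ge\; \frac{n}{2} - \frac{5}{4}\sqrt{kn} - (2k-1).
\]
Together with the size bounds $(\ref{eq-new-V12})$, for any two vertices $x,y$ lying in the same part of $H$ we obtain
\[
|N_H(x) \cap N_H(y)| \;\ge\; 2\delta(H) - \max\{|V_1'|,|V_2'|\} \;\ge\; \frac{n}{2} - \frac{15}{4}\sqrt{kn} - 6k.
\]
Since $n \ge 187k$, the right-hand side comfortably exceeds $2k$, which will be larger than every ``forbidden set'' arising in the construction.

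To prove (a), I will establish by induction on $p$ the following stronger statement: for every $p \in \{2,3,\ldots,2k-1\}$, every pair of distinct vertices $x,y \in V(H)$ whose parts are compatible with the parity of $p$ (same part iff $p$ is even, opposite parts iff $p$ is odd), and every $S \subseteq V(H)\setminus\{x,y\}$ with $|S| \le 2k - p$, the graph $H - S$ contains an $(x,y)$-path of length exactly $p$. The base $p=2$ is immediate from the common-neighborhood bound above minus the at most $2k-2$ forbidden vertices. The base $p=3$ proceeds by first picking $w_2 \in N_H(y) \setminus (S \cup \{x\})$ via the degree bound and then a common neighbor $w_1 \in N_H(x) \cap N_H(w_2) \setminus (S \cup \{y\})$ via the common-neighborhood bound. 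For $p \ge 4$, choose any $w \in N_H(x) \setminus (S \cup \{y\})$ in the part opposite $x$, and apply the inductive hypothesis to $(w,y)$ with the enlarged forbidden set $S \cup \{x\}$ of size at most $2k-(p-1)$, obtaining a $(w,y)$-path of length $p-1$ in $H-(S\cup\{x\})$; prepending the edge $xw$ yields the required $(x,y)$-path of length $p$. Applied with $S = \emptyset$, this gives both halves of part (a).

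For part (b), suppose towards contradiction that some $u_i \in V(C)$ has a neighbor $v \in V_1'$ and a neighbor $u \in V_2'$. Since $k \ge 2$, we have $h := 2k - 1 \in \{3,5,\ldots,2k-1\}$, so part (a) supplies a $(v,u)$-path $P$ of length $2k-1$ entirely inside $H = G - V(C)$. As $u_i \in V(C)$ is disjoint from $V(P)$, appending the two edges $u_i v$ and $u_i u$ produces a cycle of length $(2k-1)+2 = 2k+1$ in $G$, contradicting the $C_{2k+1}$-freeness of $G$. Therefore $d_{V_1'}(u_i)\cdot d_{V_2'}(u_i) = 0$ for every $u_i \in V(C)$.

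The main obstacle is the quantitative bookkeeping in part (a): at every step of the induction one must verify that both the single-degree extraction (needing $|N_H(\cdot)| > |S|+1$) and the common-neighborhood extraction (needing $|N_H(x)\cap N_H(y)| > |S|$) leave a strictly positive number of valid choices throughout the whole range $p \le 2k-1$. The assumption $n \ge 187k$ is precisely what absorbs the accumulated constants coming from $\sqrt{kn}$, $|V(C)|$, and $|S|$; once this numerical check is in place, the passage from (a) to (b) is essentially a one-line observation.
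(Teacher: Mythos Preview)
Your proposal is correct and follows essentially the same approach as the paper: both arguments build the required path greedily inside the dense bipartite graph $H=G-C$, exploiting that every vertex of $H$ lies outside $L$ and hence has degree at least $\tfrac{n}{2}-\tfrac{5}{4}\sqrt{kn}-(2k-1)$, with a common-neighbourhood estimate to guarantee the final closing step; your inductive formulation with an explicit forbidden set $S$ is just a cleaner packaging of the paper's forward-greedy construction, and your derivation of part (b) from part (a) via a $(2k-1)$-path plus two edges through $u_i$ is identical to the paper's.
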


\begin{proof} For part (a), we 
observe that $v,u\notin L$ by \Cref{lem-L-2k}. Then 
    $$d_{V_2'}(v)>\frac{1}{2}n-\frac{5}{4}\sqrt{kn}-2k$$ 
    and 
    $$d_{V_1'}(u)> \frac{1}{2}n-\frac{5}{4}\sqrt{kn}-2k.$$ 
 Next, we can greedily find a path of length $h$ starting from $v$ and ending at $u$. 
     Indeed, we denote $v_1:=v$ and 
     $v_{h+1}:=u$. Since $d_{V_2'}(v_1)>\frac{1}{2}n-\frac{5}{4}\sqrt{kn}-2k$, 
     we can choose a vertex $v_2\in N_{V_2'}(v_1)\setminus\{v_{h+1}\}$. 
     Since $v_2\in V_2'$, we   
     consider the neighbors of $v_2$ in 
     $V_1'$. More precisely, a direct computation gives 
     \begin{align*}
     |N_{V_1'}(v_2)| >\frac{1}{2}n-\frac{5}{4}\sqrt{kn}-2k>k.
     \end{align*} 
     Similarly, we can find  $v_3,v_5, \ldots ,v_{h-2}\in V_1'$ and 
     $v_4,v_6,\ldots ,v_{h-1} \in V_2'$
     such that $v_iv_{i+1}\in E(G-C)$ for each $1\le i\le h-2$. Finally, we show that there exists a vertex $v_{h}\in  V_1'$ such that 
     $v_{h-1}v_h, v_hv_{h+1}\in E(G-C)$. This is available for our purpose, since 
     $d_{V_1'}(v_{h-1}),d_{V_1'}(v_{h+1}) \ge 
     \frac{n}{2} - \frac{5}{4}\sqrt{kn}-2k$. Moreover, we know from (\ref{eq-new-V12}) that $|V_1'|\le \frac{n}{2} + \frac{5}{4}\sqrt{kn}+2k$.  
     Then for $n\ge 85k$, 
     \[ |N_{V_1'}(v_{h-1}) \cap N_{V_1'}(v_{h+1}) | \ge 2\left(\frac{n}{2}-\frac{5}{4}\sqrt{kn}-2k\right)-\left(\frac{n}{2}+\frac{5}{4}\sqrt{kn}+2k\right)-k>0.  \]
Therefore, we can choose a vertex $v_{h}\in V_1'$ that joins both $v_{h-1}$ and $v_{h+1}$. 
     In conclusion, 
     we find a path starting from $v$ to $u$ with length $h$ in $G-C$. 
Similarly, we can prove that for every  $s\in\{2,4,\ldots,2k-2\}$, there exists a path from $v$ to $v'$ with length $s$ in $G-C$.

For part (b), suppose on the contrary that there exists  a vertex $u_i\in V(C)$ such that $d_{V_1'}(u_i)\neq 0 $ and 
$d_{V_2'}(u_i)\neq 0$. Then we can choose two vertices $v_1\in N_{V_1'}(u_i)$ and $v_2\in N_{V_2'}(u_i)$. By part (a), there exists a path from $v_1$ to $v_2$ with length $2k-1$ in $G-C$. Thus, we find an odd cycle $C_{2k+1}$ in $G$, which contradicts with the assumption.
\end{proof}

Let $u_1\in V(C)$ be a vertex such that $x_{u_1}=\max\{x_{u_i} : 1\le i\le 2t+1\}$.

\begin{lemma} \label{lem-deg-zero}
    For each $i\in[2,2t+1]$,
    we have $d_{V(G-C)} (u_i)=0$.  
\end{lemma}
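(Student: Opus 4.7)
The plan is to apply the edge-shifting operation of \Cref{lem-WXH} to migrate all outside edges of the supposedly offending vertex $u_i$ (for $i\neq 1$) over to $u_1$, then verify that the resulting graph $G'$ remains $\mathcal{C}_{\ell,k}$-free and non-bipartite, contradicting the extremality of $G$. Specifically, suppose for contradiction that some $u_i$ with $i\in[2,2t+1]$ satisfies $d_{V(G-C)}(u_i)\ge 1$. By \Cref{lem-pathlength}(b), all outside neighbors of $u_i$ lie in a single side of the bipartition $V_1'\sqcup V_2'$; without loss of generality, $N_{V(G-C)}(u_i)\subseteq V_1'$. Set $W := N_{V(G-C)}(u_i)\setminus N_G(u_1)$.

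In the main case $W\neq\emptyset$, define $G' := G - \{u_i w:w\in W\} + \{u_1 w:w\in W\}$. Since $x_{u_1}\ge x_{u_i}$ by the choice of $u_1$ as the maximizer of $x$ on $V(C)$, \Cref{lem-WXH} yields $\lambda(G')>\lambda(G)$, while $C$ is preserved so $G'$ is non-bipartite. It therefore suffices to check that $G'$ remains $\mathcal{C}_{\ell,k}$-free. If a forbidden $C_{2b+1}$ appears in $G'$, it uses at least one new edge $u_1 w'$ with $w'\in W\subseteq V_1'$. If it uses \emph{two} new edges $u_1 w'$ and $u_1 w''$, then deleting both from the cycle leaves an internal path of length $2b-1$ from $w'$ to $w''$ in $G$, which together with $u_i w',u_i w''\in E(G)$ forms a simple $C_{2b+1}$ in $G$, a contradiction.

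If the cycle uses exactly one new edge $u_1 w'$, then removing it gives a $u_1$-to-$w'$ path $P$ of length $2b$ lying in $G$ (all new edges are incident to $u_1$, an endpoint of $P$). Combining $P$ with the edge $u_i w'\in E(G)$ and one of the two arcs of $C$ between $u_i$ and $u_1$ (of lengths $i-1$ and $2t+2-i$, which have opposite parities) yields a closed walk in $G$ of length $2b+q+1$. By selecting the arc of appropriate parity and, if needed, re-routing $P$ through $G-C$ using the path-flexibility of \Cref{lem-pathlength}(a) to guarantee a simple cycle of the targeted forbidden length (typically $C_{2k+1}$ by tuning a subpath length inside $G-C$), we obtain a forbidden odd cycle in $G$, again contradicting $\mathcal{C}_{\ell,k}$-freeness of $G$.

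In the remaining case $W=\emptyset$, we have $N_{V(G-C)}(u_i)\subseteq N_G(u_1)$, so $u_1$ has outside neighbors in $V_1'$ as well. A direct construction applies here: choose $y\in N_{V(G-C)}(u_1)\setminus\{w\}$ (its existence follows from the eigenvector equation at $u_1$ combined with $x_{u_1}\ge x_{u_j}$ on $V(C)$ and the bound $\lambda(G)>n/2-k$ of \Cref{lem-graph-size}, which would otherwise force $\lambda(G)\le 3$), take a path $P$ from $y$ to $w$ in $G-C$ of length $p$ guaranteed by \Cref{lem-pathlength}(a), and close via an arc of $C$ of length $q$. Solving $p+q=2k-1$ with matching parities is always feasible for $i\in[2,2t+1]$ given $1\le \ell\le t\le k-1$, and yields a simple $C_{2k+1}$ in $G$, contradicting $C_{2k+1}$-freeness. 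The main obstacle is the parity bookkeeping: one must match the parity of the $G-C$ path length (even if endpoints are on the same side, odd otherwise) against the parity of the arc length $q\in\{i-1,2t+2-i\}$, a task enabled precisely by the two choices of arc on $C$ and the flexibility of path lengths in \Cref{lem-pathlength}(a).
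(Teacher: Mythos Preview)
Your edge-shifting approach via \Cref{lem-WXH} is natural, but there is a genuine gap in the case $W\ne\emptyset$: you have not secured that $G'$ remains $\mathcal{C}_{\ell,k}$-free. By \Cref{lem-pathlength}(b), the outside neighbours of $u_1$ in $G$ all lie in a single part of $V_1'\sqcup V_2'$, but nothing forces this to be the \emph{same} part as for $u_i$. If $N_{V(G-C)}(u_1)\subseteq V_2'$ while (as you assumed) $N_{V(G-C)}(u_i)\subseteq V_1'$, then after your swap $u_1$ acquires a new neighbour $w'\in W\subseteq V_1'$ while retaining an old neighbour $y\in V_2'$. By \Cref{lem-pathlength}(a) there is a $y$--$w'$ path of length $2k-1$ in $G-C$ (hence in $G'$, since the only deleted edges are incident with $u_i\in V(C)$), and closing with $u_1y,u_1w'$ gives a simple $C_{2k+1}$ in $G'$. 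Thus $G'$ is \emph{not} $\mathcal{C}_{\ell,k}$-free and the intended contradiction fails. One might hope to bypass this by directly constructing a $C_{2k+1}$ in $G$ whenever $u_1$ and $u_i$ have outside neighbours on opposite sides, but that does not always succeed: when $t=k-1$ and $i\in\{2,2t+1\}$, the unique even arc of $C$ between $u_1$ and $u_i$ has length $2k-2$, which would force the connecting $V_2'$--$V_1'$ path in $G-C$ to have length $1$, and \Cref{lem-pathlength}(a) only supplies lengths $\ge 3$. This is precisely the boundary configuration that the paper isolates and handles by a separate Rayleigh-quotient edge-rearrangement.

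Even setting aside the opposite-sides obstruction, your verification in the same-side case is too loose. In the ``two new edges'' sub-case you implicitly assume $u_i$ does not lie on the residual $w'$--$w''$ path, but nothing rules this out (the path may traverse part of $C$); if it does, replacing $u_1$ by $u_i$ no longer yields a \emph{simple} $C_{2b+1}$. In the ``one new edge'' sub-case, the promised ``re-routing of $P$ through $G-C$ via \Cref{lem-pathlength}(a)'' is not directly applicable, since $P$ has an endpoint $u_1\in V(C)$ and may enter and exit $C$ repeatedly; you would at minimum need to locate the first step of $P$ into $V(G-C)$ and argue carefully about parities and arc lengths, which again breaks down at $t=k-1$, $i\in\{2,2t+1\}$. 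The paper's proof avoids all of this by fixing $u_1$'s side first, splitting on $d_{V_2'}(u_1)\ge 2$ versus $=1$, and in each case combining direct $C_{2k+1}$-constructions (where the parity bookkeeping goes through) with a tailored Rayleigh-quotient perturbation for the residual boundary vertices.
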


\begin{proof}
Recall that $V(G)=V_1'\sqcup V_2' \cup V(C)$. 
Suppose on the contrary that $d_{G-C}(u_i)\ge 1$. In the sequel, we will deduce a contradiction. 
For each $i\in[2,2t+1]$, we denote by   $Q_1=u_1u_2\ldots u_i$ and $Q_2=u_1u_{2t+1}u_{2t}\ldots u_i$ the two paths from $u_1$ to $u_i$ around the cycle $C$. {The maximality of $x_{u_1}$ implies that  $u_1$ has at least one neighbor in $V(G-C)$.} By \Cref{lem-pathlength}, we 
may assume by symmetry that $u_1$ has neighbors only in $V_2'$.

{\bf Case 1.} Assume that $d_{V_2'}(u_1)\geq 2$. 

In this case, 
 there exist two  vertices  $v_1\neq v_2$ such that $v_1\in N_{V_2'}(u_1)$ and $v_2\in N_{G-C}(u_i)$. 

    {\bf Case 1.1. $t\le k-2$.} 

If $v_2\in V_1'$, then by \Cref{lem-pathlength}, there exist paths in $G-C$ from $v_1$ to $v_2$ with all odd lengths at least $3$. 
Since $C$ is an odd cycle with length at most $ 2k-3$,  
we see that either $v_1Q_1u_iv_2$ or $v_1Q_2u_iv_2$ is a path with even length at most $2k-2$. 
This even path together with an odd path between $v_1$ and $v_2$ can form an odd cycle $C_{2k+1}$, a contradiction.

If $v_2\in V_2'$, then by \Cref{lem-pathlength}, there exist paths in $G-C$  from $v_1$ to $v_2$ with all even lengths at least $2$. Similarly, either 
 $v_1Q_1u_iv_2$ or $v_1Q_2u_iv_2$ 
 is a path with odd length at most $2k-3$. 
 This odd path together with an even path between $v_1$ and $v_2$ can lead to a copy of $C_{2k+1}$. This is also a contradiction.

{\bf Case 1.2. $t=k-1$.}  

The cycle $C$ has length $2k-1$ exactly. 
Similar to the above discussion, 
we can prove that for every $i\in[3,2k-2]$, 
we have $d_{G-C}(u_i)=0$.  
Next, we need to prove $d_{V(G-C)}(u_2)=0$ and $d_{V(G-C)}(u_{2k-1})=0$. 
We remark that the arguments for $u_2$ and $u_{2k-1}$ are different from that for other vertices\footnote{For example, if $u_2$ has a neighbor, say $v_2$, in the part $V_1'$. 
Then $v_1Q_2u_2v_2$ is a path with length $2k$. Although there exist paths from $v_1$ to $v_2$ with all odd lengths at least $3$, this can not lead to a copy of $C_{2k+1}$.}. 
Firstly, we can see that $N_{G-C}(u_2) \subseteq {V_1'}$ and $N_{G-C}(u_{2k-1}) \subseteq {V_1'}$. Otherwise, 
if there is a vertex $v \in N_{V_2'}(u_2)$, 
then by \Cref{lem-pathlength}, we can find a path from $v$ to $v_1$ with length $2k-1$, 
which together with the path $v_1u_1u_2v$ yields a copy of $C_{2k+1}$, a contradiction. 

If $u_2$ has a neighbor $v\in V_1'$ and $u_{2k-1}$ has a neighbor $w\in V_1'$ such that $w\neq v$, then $vu_2u_3\cdots u_{2k-1}w$ forms a path with length $2k-1$. 
Clearly, we have $|N(v) \cap N(w)| \ge 1$.   Extending the above path leads to a copy of $C_{2k+1}$, a contradiction. 
Therefore, at least one of $N_{V_1'}(u_{2})$ and 
$N_{V_1'}(u_{2k-1})$ is empty, 
or $N_{V_1'}(u_2)=N_{V_1'}(u_{2k-1})=\{v\}$ for some vertex $v\in V_1'$. 
In what follows, we only deal with the former case since the latter case is similar. 
Without loss of generality, we may assume that $N_{V_1'}(u_2)\neq \emptyset$ and $N_{V_1'}(u_{2k-1})= \emptyset$. 
In this case, there is no edge between 
$N_{V_1'}(u_2)$ and $N_{V_2'}(u_1)$. 
 Otherwise, we can find a copy of $C_{2k+1}$, a contradiction.      
Let $G'=G-\{u_2v : v\in N_{V_1'}(u_2) \} + 
\{vu : v\in N_{V_1'}(u_2),u\in N_{V_2'}(u_1)\}$. Clearly, $G'$ is a non-bipartite $\mathcal{C}_{\ell,k}$-free graph on $n$ vertices. 
Note that $\lambda (G) x_{u_1} = x_{u_2} + x_{u_{2t+1}} + \sum_{u\in N_{V_2'}(u_2)} x_u$. 
Then 
  \begin{align*}
    \lambda(G')-\lambda(G)
    &\geq\frac{2}{{\bm{x}^T}{\bm{x}}}\left( 
    \sum_{v\in N_{V_1'}(u_2)}\sum_{u\in N_{V_2'}(u_1)} x_vx_u 
    - \sum_{v\in N_{V_1'}(u_{2})}x_{u_2}x_v \right) \\
    &\geq \frac{2}{{\bm{x}^T}{\bm{x}}}\sum_{v\in N_{V_1'}(u_2)} 
    x_v \left(\sum_{u\in N_{V_2'}(u_1)}x_u 
    -x_{u_2} \right)\\
    &= \frac{2}{{\bm{x}^T}{\bm{x}}}\sum_{v\in N_{V_1'}(u_2)} x_v \Big(\lambda(G)x_{u_1}-2x_{u_2} - x_{u_{2k-1}} \Big)\\
    &\geq \frac{2}{{\bm{x}^T}{\bm{x}}}\sum_{v\in N_{V_1'}(u_2)} x_v 
    (\lambda(G)-3) x_{u_1}\\
    &> 0, 
\end{align*}
which contradicts with the maximality of $G$. 
Hence, we must have $d_{G-C}(u_2)=0$. 

{\bf Case 2.} 
Assume that $d_{V_2'}(u_1)=1$. 

Recall that $C$ is a shortest odd cycle in $G$. By the above argument, 
{we can similarly show that  $d_{V(G-C)}(u_{i})=0$ for every $i\in[4,2t-1]$.} 
Moreover, for each $i\in \{2,3,2t,2t+1\}$, we have 
\[ d_{V(G-C)}(u_i)\leq 1. \] 
Let $G'=G-\{u_iw: w\in N_{V(G-C)}(u_i), \ i\in\{2,3,2t,2t+1\}\}+\{u_1w: w\in V_2'\setminus N_{V_2'}(u_1)\}$. Obviously, $G'$ is non-bipartite and $\mathcal{C}_{\ell,k}$-free. 
By (\ref{equation 1}) and \Cref{lem-L-2k}, 
we get 
$$\sum_{u\in V_2'\setminus N_{V_2'}(u_1)} x_u 
\ge \lambda(G) - |L| -1 \ge \lambda(G) -2k.$$ 
Note that $x_w\le x_{u_1} \le 1$ for every $w\in V(G)$. 
It follows that
\begin{align*} 
    \lambda(G')-\lambda(G) &=\frac{2}{{\bm{x}}^T{\bm{x}}}\left( \sum_{u\in V_2' \setminus N_{V_2'}(u_1)} x_{u_1}x_u-\sum_{i\in\{2,3,2t,2t+1\}}\sum_{w\in N_{V(G-C)}(u_i)} x_w x_{u_i} \right)\\ 
    & { \geq\frac{2}{{\bm{x}}^T{\bm{x}}}\left( \sum_{u\in V_2' \setminus N_{V_2'}(u_1)} x_{u_1}x_u-\sum_{i\in\{2,3,2t,2t+1\}}d_{V(G-C)}(u_i) \cdot x_{u_i} \right) } \\
    &\geq \frac{2}{{\bm{x}^T}{\bm{x}}}\left(\sum_{u\in V_2'\setminus N_{V_2'}(u_1)}x_{u_1}x_u- 4 x_{u_1}\right)\\ 
    &\geq \frac{2x_{u_1}}{{\bm{x}^T}{\bm{x}}}
    \Big(\lambda(G)-2k-4 \Big)\\
    &>0, 
\end{align*}
a contradiction. This completes the proof.
\end{proof}

From the previous discussion, we know that $V(G)=V_1'\sqcup V_2'\sqcup V(C)$ such that $e(V_1')=e(V_2')=0$ and $u_i$ has no neighbor in $V(G-C)$ for each $i\in [2,2t+1]$. Since $G$ is the spectral extremal graph, i.e., maximizing the spectral radius, it follows that $G[V_1',V_2']$ forms a complete bipartite subgraph, and $u_1$ is adjacent to all vertices of $V_2'$ by symmetry. 
Putting $u_1$ and $V_1'$ together, we see that 
$G[V_1'\cup \{u_1\},V_2']$ is a complete bipartite graph.

To finish the proof of \Cref{thm-odd-bi-stable}, we need to show that 
$|V_2'| -1 \le |V_1'| +1 \le |V_2'|$. 
 {It is tricky to use spectral techniques in order to prove  such an  inequality.} 
 Our arguments are based on the analysis of the Perron eigenvector, including the classical Rayleigh quotient and 
 the double-eigenvector technique. The latter technique was initially used by Rowlinson \cite{Row1988}; see, e.g., \cite{ZLX2022,LZZ2022} for recent applications. 
So we present this tricky part as the following lemma.

\begin{lemma} \label{lem-balanced}
   We have  $|V_2'| -1 \le |V_1'| +1 \le |V_2'|$. 
\end{lemma}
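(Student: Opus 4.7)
The plan is to combine two Rayleigh-quotient transfer arguments with the Hoffman--Smith lemma. From the preceding lemmas, $G$ has the form $C_{2t+1}(K_{p+1, q})$, where $p = |V_1'|$ and $q = |V_2'|$, the two bipartite parts are $V_1' \cup \{u_1\}$ and $V_2'$, and the cycle $C$ of length $2t + 1$ is attached at $u_1$. Because every vertex of $V_1'$ shares the neighborhood $V_2'$ (and symmetrically for $V_2'$), the Perron eigenvector $\bm{x}$ will take constant values $a$ on $V_1'$ and $b$ on $V_2'$. The eigenvector equations $\lambda a = qb$ and $\lambda b = pa + x_{u_1}$ give the key identity $x_{u_1}/a = (\lambda^2 - pq)/q$, and since $K_{p+1, q} \subseteq G$ we have the lower bound $\lambda \ge \sqrt{(p+1)q}$.

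The first step will be a transfer (swap) argument establishing the upper bound $|V_2'| \le |V_1'| + 2$. Let $G_1 = C_{2t+1}(K_{p+2, q-1})$ be obtained from $G$ by moving a single vertex $w \in V_2'$ to the opposite bipartite side, i.e., replacing its $p+1$ edges to $V_1' \cup \{u_1\}$ by $q - 1$ edges to $V_2' \setminus \{w\}$. Then $G_1$ retains the overall ``cycle-plus-complete-bipartite'' structure and is still $\mathcal{C}_{\ell, k}$-free and non-bipartite, so the extremality of $G$ and the Rayleigh quotient yield
\[ 0 \ge \bm{x}^T (A(G_1) - A(G)) \bm{x} = 2 b^2 (q - 1 - \lambda), \]
hence $q \le \lambda + 1$. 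Combining with $\lambda \ge \sqrt{(p+1)q}$ gives $q - 1 \le \sqrt{(p+1)q}$, and squaring produces $(q-1)^2 \le (p+1)q$, which simplifies to $p \ge q - 3 + 1/q$. Since $p, q$ are integers with $q \ge 1$, this forces $p \ge q - 2$, i.e., $|V_2'| \le |V_1'| + 2$.

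For the matching lower bound $|V_1'| + 1 \le |V_2'|$, that is, for showing that $u_1$ lies in the smaller-or-equal bipartite part, I will compare $G$ with the ``reflected'' graph $G^-$ obtained by attaching the same cycle $C_{2t+1}$ at a vertex of $V_2'$ instead of at $u_1 \in V_1' \cup \{u_1\}$. If $q < p + 1$, i.e., $u_1$ were in the strictly larger part, then the cycle-attachment vertex in $G^-$ would have the strictly larger bipartite-degree $p + 1$ instead of $q$, hence degree $p + 3 > q + 2$. A double-eigenvector comparison (using the Perron vectors of both $G$ and $G^-$, and exploiting that the cycle contribution is evaluated against entries that are strictly larger in $G^-$) then yields $\lambda(G^-) > \lambda(G)$; since $G^-$ is also an $n$-vertex $\mathcal{C}_{\ell, k}$-free non-bipartite graph, this contradicts the extremality of $G$, forcing $q \ge p + 1$.

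Finally, I will need to rule out the possibility $t > \ell$: if $t \ge \ell + 1$, applying \Cref{lem-HS} to unsubdivide the internal path of $C$ a total of $2(t - \ell)$ times produces $C_{2\ell+1}(K_{p+1, q})$ on $n - 2(t-\ell)$ vertices with strictly larger spectral radius, and absorbing the $2(t - \ell)$ freed vertices back into the bipartite part to form $\tilde{G} = C_{2\ell+1}(T_{n-2\ell, 2})$ (with $u_1$ in the smaller part) yields a further strict increase of $\lambda$, contradicting the extremality of $G$. Hence $t = \ell$, and the two size inequalities above deliver the lemma. The main obstacle is the double-eigenvector comparison in the reflection step, since the spectral gap $\lambda(G^-) - \lambda(G)$ becomes small in the near-balanced regime $q \approx p$ and its strict positivity must be extracted from a careful joint analysis of the two Perron eigenvectors rather than from either one alone.
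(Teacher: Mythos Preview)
Your first step contains a genuine logical error. The transfer inequality you derive, $q-1\le\lambda$, is correct, but combining it with the \emph{lower} bound $\lambda\ge\sqrt{(p+1)q}$ does not yield $q-1\le\sqrt{(p+1)q}$: from $q-1\le\lambda$ and $\sqrt{(p+1)q}\le\lambda$ nothing follows about the relative size of $q-1$ and $\sqrt{(p+1)q}$. What you actually need is an \emph{upper} bound $\lambda\le\sqrt{(p+1)q}+o(1)$, and this is not free --- it requires bounding the contribution of the attached cycle to $\lambda$ via Rayleigh (the paper does exactly this, obtaining $\lambda\le\sqrt{(p+1)q}+\tfrac{40}{\lambda(n-2k)}$ by estimating $\sum_{uv\in E(C)}x_ux_v$). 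Without that upper bound the inequality $q-1\le\lambda$ is essentially vacuous, since $\lambda\approx n/2\approx q$ anyway. In fact the paper treats this direction ($|V_2'|\le|V_1'|+2$) by the same vertex-transfer $G\to G_1$ you describe, but it uses the double-eigenvector identity $\bm{y}^T(A(G_1)-A(G))\bm{x}$ with the Perron vectors of both $G$ and $G_1$, precisely because the one-sided Rayleigh bound is too weak here.

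Your second step is only a sketch: you assert that a double-eigenvector comparison gives $\lambda(G^-)>\lambda(G)$ when the cycle is re-attached to the strictly smaller part, but you do not carry it out, and you yourself flag it as the main obstacle. The paper avoids this reflection entirely. It first shows $x_{u_1}>x_v$ for all $v\ne u_1$ (via Lemma~\ref{lem-WXH}); then, assuming $|V_1'|\ge|V_2'|$, it combines the eigen-equations with the Rayleigh upper bound on $\lambda$ to compute directly that $x_v-x_{u_1}>0$ for $v\in V_2'$, contradicting the maximality of $x_{u_1}$. This is a single-eigenvector argument and is more elementary than the two-graph comparison you propose. Finally, your third step (forcing $t=\ell$) is not part of this lemma at all; it is handled separately afterwards via Lemma~\ref{lem-final}.
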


\begin{proof}
     Recall that $C=u_1u_2\cdots u_{2t+1}u_1$ is a shortest odd cycle, where $x_{u_1}=\max\{x_{u_i}:1\le i\le 2t+1\}$ and $G[V_1'\cup\{u_1\},V_2']$ is a complete bipartite graph. First of all, 
     we claim that 
     \begin{equation}
         \label{eq-max-u1}
         x_{u_1}>\max\{x_v:v\in V(G) \setminus \{u_1\}\} . 
     \end{equation}
      Otherwise, if $x_{u_1} \leq x_v$ holds for a vertex $v\in V(G)$ with  $v\neq u_1$, then we denote $G'=G-\{u_1u_2,u_1u_{2t+1}\}+\{vu_2,vu_{2t+1}\}$, and Lemma \ref{lem-WXH} yields $\lambda (G')>\lambda (G)$, a contradiction. 
     Recall that $\bm{x}\in \mathbb{R}^n$ is  a Perron eigenvector with $\max \{x_i:i\in V(G)\}=1$, which implies  $x_{u_1}=1$. 
      
\begin{claim} \label{claim}
    For each vertex $v\in V_1'\cup V_2'$, we have $x_v>\frac{1}{2}$. 
    \end{claim}
    
\begin{proof}[Proof of Claim \ref{claim}] 
Note that $\lambda (G)x_{u_1} = (\sum_{w\in V_2'} x_w ) + x_{u_2} + x_{u_{2t+1}}$. Then 
    for each $u\in V_1'$, 
    \[  \lambda (G) x_u = 
    \sum_{w\in V_2'} x_w = \lambda(G) x_{u_1} - x_{u_2} - x_{u_{2t+1}} \ge \lambda (G) -2.  \]
    Recall in Lemma \ref{lem-graph-size} that  
    $\lambda (G)> \frac{n}{2} -k$. So we 
    get $x_u\ge 1- \frac{2}{\lambda (G)}>\frac{1}{2}$ for each 
    $u\in V_1'$. 
   
   Similarly, for every $v\in V_2'$, we have  
   \[ \lambda(G) x_v =x_{u_1}+ \sum_{u\in V_1'} x_u \ge (1+|V_1'|) 
   \left(1- \frac{2}{\lambda(G)} \right). \] 
   Consequently, we obtain  
   $x_v\ge \frac{(1+ |V_1'|)(1-\frac{2}{\lambda (G)})}{\lambda (G)}$. Recall in (\ref{eq-new-V12}) that $|V_1'|\ge \frac{n}{2} - \frac{5}{4}\sqrt{kn} -2k$ and $\frac{n}{2} -k < \lambda (G)\le \frac{n}{2}$. So $x_v > \frac{1}{2}$ 
   for every $v\in V_2'$, as needed.  
\end{proof}
     
     Next, we show that $|V_1'| +1\le |V_2'|$. Suppose on the contrary that $|V_1'|\geq |V_2'|$. We denote 
     $s:=|V_1'|-|V_2'|\geq 0$. We will show that for every $v\in V_{2}'$, we have $x_{u_1}< x_v$, which leads to a contradiction with (\ref{eq-max-u1}).  Note that all entries of $\bm{x}$ corresponding to vertices of $V_i'$ are equal. 
     Fix two vertices $u\in V_1'$ and $v\in V_2'$, we have 
     $
     \lambda(G)x_u=\sum_{w\in V_2'}x_w=|V_2'|x_v
     $. Then 
     $$
\lambda(G)x_{u_1}=|V_2'|x_v+x_{u_2}+x_{u_{2t+1}} = \lambda (G)x_u + x_{u_2} + x_{u_{2t+1}}.
     $$
    Consequently, we obtain $x_u = x_{u_1} - \frac{1}{\lambda (G)}(x_{u_2} + x_{u_{2t+1}})$ and 
      \begin{equation} \label{eq-xv}
     \lambda(G)x_v= |V_1'|x_u+x_{u_1}  
     =(|V_1'|+1)x_{u_1}-\frac{|V_1'|}{\lambda(G)}(x_{u_2}+x_{u_{2t+1}}). 
     \end{equation} 
   Note that $x_{u_1}=1$ and $x_{u_i} \le \frac{2}{\lambda (G)}$ for every $2\le i\le 2t+1$. 
    Then 
    $$\sum_{uv\in E(C)} x_ux_v \le 
    \frac{4}{\lambda(G)} + 
    (2t-1)\frac{4}{\lambda^2(G)} < \frac{5}{\lambda (G)}.$$ 
    By Claim \ref{claim}, we have 
    $\bm{x}^T\bm{x} \ge 1+ \frac{1}{4}(n-2t-1) \ge \frac{1}{4}(n-2k)$.  
    Let $G'=G-\{e\in E(G): e\in E(C)\}$. 
    Removing the $2t$ isolated vertices, we see that $G'$ is a complete bipartite graph with vertex parts $V_1'\cup\{x_{u_1}\}$ and $ V_2'$.  
    By the Rayleigh quotient formula, 
it follows that 
    \begin{equation} \label{eq-lam}
\lambda (G)\le \lambda (G') + \frac{2\sum_{uv\in E(C)}x_ux_{v}}{\bm{x}^T\bm{x}} \le \sqrt{(|V_1'|+1)\cdot |V_2'|} + \frac{40}{\lambda(G)(n-2k)}. 
    \end{equation}
Combining (\ref{eq-xv}) with (\ref{eq-lam}), we obtain 
\begin{align*}
    x_v -x_{u_1}&=\frac{|V_1'|+1-\lambda(G)}{\lambda(G)}x_{u_1}-\frac{|V_1'|}{\lambda^2(G)}(x_{u_2}+x_{u_{2t+1}})\\
    &\geq \left(\frac{|V_1'|+1-\sqrt{(|V_1'|+1)\cdot|V_2'|}-\frac{40}{\lambda(G)(n-2k)}}{\lambda(G)}\right)x_{u_1}-\frac{4|V_1'|}{\lambda^3(G)}\\
    &\geq \left(\frac{\sqrt{|V_1'|+1}}{\sqrt{|V_1'|+1}+\sqrt{|V_2'|}}(s+1)-\frac{40}{\lambda (G) (n-2k)}\right)\frac{x_{u_1}}{\lambda(G)}-\frac{4|V_1'|}{\lambda^3(G)}\\
    &\overset{(\ref{eq-new-V12})}{\geq} \frac{1}{4}\cdot  \frac{x_{u_1}}{\lambda(G)}-\frac{4|V_1'|}{\lambda^3(G)} \quad  (\text{since ${x_{u_1}=1}$})\\
    &\geq \frac{\lambda^2(G)- 16|V_1'|}{4\lambda^3(G)}, 
\end{align*}
which leads to $x_v> x_{u_1}$, a contradiction. Thus, we have $|V_1'|+1\leq |V_2'|$. 

Finally, we prove that $|V_2'|-1\leq |V_1'|+1$. Suppose on the contrary that $ |V_1'|+1 \le |V_2'|-2$. We will show that deleting a vertex of $V_2'$ and copying a vertex of $V_1'$  will increase the spectral radius of $G$. 
The key ingredient depends on the double-eigenvector technique. 
To start with, we choose a vertex $v\in V_2'$, and denote $G'=G-\{vw:w\in V_1'\cup \{u_1\}\}+\{vw:w\in V_2'\}$. Let $\bm{y}\in \mathbb{R}^n$ be the Perron eigenvector of $\lambda(G')$ with $\max\{y_v:v\in V(G')\}=1$. Fix two vertices $v_1\in V_1'$ and $v_2\in V_2'\setminus \{v\}$,  
we shall prove the following claim. 

\begin{claim} \label{claim-3-10}
    We have $\lambda(G)y_v(x_{v_1}-x_v) + \lambda(G')x_v(y_v-y_{v_2}) >0$. 
\end{claim}

\begin{proof}[Proof of Claim \ref{claim-3-10}]
We denote $s:=|V_2'|-|V_1'|\geq 3$. Note that 
the eigen-equation gives 
$$
\begin{cases}
\lambda(G)x_{v_1}=|V_2'|x_v, \\ \lambda(G)x_v=|V_1'|x_{v_1}+x_{u_1} . 
\end{cases} $$
Then 
\begin{equation} \label{eq-xxx}
 x_{v_1}-x_v=\frac{sx_v-x_{u_1}}{\lambda(G)+|V_1'|}.
 \end{equation}
Correspondingly, we have 
$$
\begin{cases}
\lambda(G')y_{v}=(|V_2'|-1)y_{v_2}, \\ 
\lambda(G')y_{v_2}=(|V_1'|+1)y_{v}+y_{u_1}, \\
\lambda (G') y_{u_1}= (|V_2|-1)y_{v_2} + y_{u_2} + y_{u_{2t+1}}.
\end{cases}
$$ 
It yields that 
\begin{equation}
    \label{eq-yyy}
    y_{v}-y_{v_2}=\frac{(s-2)y_{v_2}-y_{u_1}}{\lambda(G')+|V_1'|+1}.
\end{equation} 
A similar argument  of Claim \ref{claim} can yield that $x_{v},y_v,x_{v_1},y_{v_2}>\frac{1}{2}$. 

{\bf Case 1.}
If $s\geq 4$, then we obtain from (\ref{eq-xxx}) and (\ref{eq-yyy}) that 
$x_{v_1}-x_v>0$ and $y_{v}-y_{v_2}>0$. 

{\bf Case 2.} 
Suppose that $s=3$. Note that 
$\lambda(G')\le \Delta (G')=|V_1'| +4$. 
We have 
\begin{align}
\lambda (G')(y_{v_2} - y_{u_1} )&= 
(|V_1'| +1)y_v + y_{u_1} - (|V_2'|-1)y_{v_2} - y_{u_2} - y_{u_{2t+1}} \notag \\ 
&=(|V_1'|+1-\lambda(G'))y_v+y_{u_1}-y_{u_2}-y_{u_{2t+1}} \notag \\
& \ge -3y_v+y_{u_1}-y_{u_2}-y_{u_{2t+1}}. \label{eq-zzz} 
\end{align}
Using (\ref{eq-xxx}) and (\ref{eq-yyy}), we get 
\begin{align*}
    &\lambda(G)y_v(x_{v_1}-x_v) + \lambda(G')x_v(y_v-y_{v_2}) \\
    &= 
    \frac{\lambda(G)y_v}{\lambda(G)+|V_1'|}(3x_v-x_{u_1})+\frac{\lambda(G')x_v}{\lambda(G')+|V_1'|+1}(y_{v_2}-y_{u_1})\\
    &\overset{(\ref{eq-zzz})}{\geq}  
    \frac{\lambda(G)y_v}{\lambda(G)+|V_1'|}(3x_v-x_{u_1})+\frac{x_v}{\lambda(G')+|V_1'|+1}\cdot (-3y_v+y_{u_1}-y_{u_2}-y_{u_{2t+1}}) \\ 
    &>\frac{\lambda(G)x_vy_v}{\lambda(G)+|V_1'|}-\frac{5x_v}{\lambda(G')+|V_1'|+1}\\
    &>0.
\end{align*}
To sum up, {we always get $\lambda(G)y_v(x_{v_1}-x_v) + \lambda(G')x_v(y_v-y_{v_2}) >0$ in either cases above.}
\end{proof}

 Applying the double-eigenvector technique, it follows that 
\begin{align*}
    &\bm{y}^T(\lambda(G')-\lambda (G)) \bm{x}=\bm{y}^T(A(G')-A(G)) \bm{x} \\
    &= \sum_{uv\in E(G')} (x_uy_v+x_vy_u) - 
    \sum_{uv\in E(G)} (x_uy_v + x_vy_u) \\ 
    &=
    x_v \left(\sum_{w\in V_2'\setminus\{v\}}y_w \right)+y_v \left(\sum_{w\in V_2'\setminus\{v\}}x_w \right) 
    - x_v\left(\sum_{w\in V_1'\cup\{u_1\}}y_w\right)-y_v\left(\sum_{w\in V_1'\cup\{u_1\}}x_w\right) \\
    &=\lambda(G')x_vy_v+y_v \big(\lambda(G)x_{v_1}-x_v\big)- x_v\big(\lambda(G')y_{v_2}-y_v \big)-\lambda(G)x_vy_v \\
    &= \lambda(G)y_v(x_{v_1}-x_v)+ \lambda(G')x_v(y_v-y_{v_2}) \\
    &>0, 
\end{align*}
where the last inequality by Claim \ref{claim-3-10}, a contradiction. So $|V_2'|-1\leq |V_1'|+1$, as needed.  
\end{proof}

    By the above discussion, we get $G= C_{2t+1}(T_{n-2t,2})$, where $\ell \le t \le k-1$. Then 
    $$\lambda(G)= \lambda(C_{2t+1}(T_{n-2t,2}))\leq \lambda(C_{2\ell+1}(T_{n-2\ell ,2})),$$ 
    where the last inequality holds by \Cref{lem-final}. Moreover, the equality holds if and only if $t=\ell$ and 
    $G= C_{2\ell+1}(T_{n-2\ell ,2})$.  This completes the proof of \Cref{thm-odd-bi-stable}.

\section{Concluding remarks}

In this paper, we have studied the spectral extremal problems for non-bipartite graphs without short odd cycles. However, a distinct phenomenon is observed in non-bipartite triangle-free graphs. 
Recall that a result of Erd\H{o}s \cite[p. 306]{BM2008} states that 
if $G$ is a non-bipartite triangle-free graph on $n$ vertices, then 
$e(G)\le \lfloor \frac{(n-1)^2}{4}\rfloor +1$. 
This bound is the best possible when considering the blow-ups of a $5$-cycle. 
Extending this result, a natural way is to consider the triangle-free graphs with 
restricted chromatic number. In fact, this seems much more difficult. 

In 1995, Jin \cite{Jin1995} proved that if $G$ is an $n$-vertex graph with 
chromatic number $\chi (G)\ge 4$, then the minimum degree $\delta (G)\le \frac{10}{29}n$. This bound can be achieved by a balanced blow-up of the Gr\"{o}tzsch graph.  The Gr\"{o}tzsch graph is defined as in Figure \ref{Fig-Grot}, which is the (order-)smallest triangle-free graph with chromatic number four.

 \begin{figure}[H]
 \centering  
\includegraphics[scale=0.8]{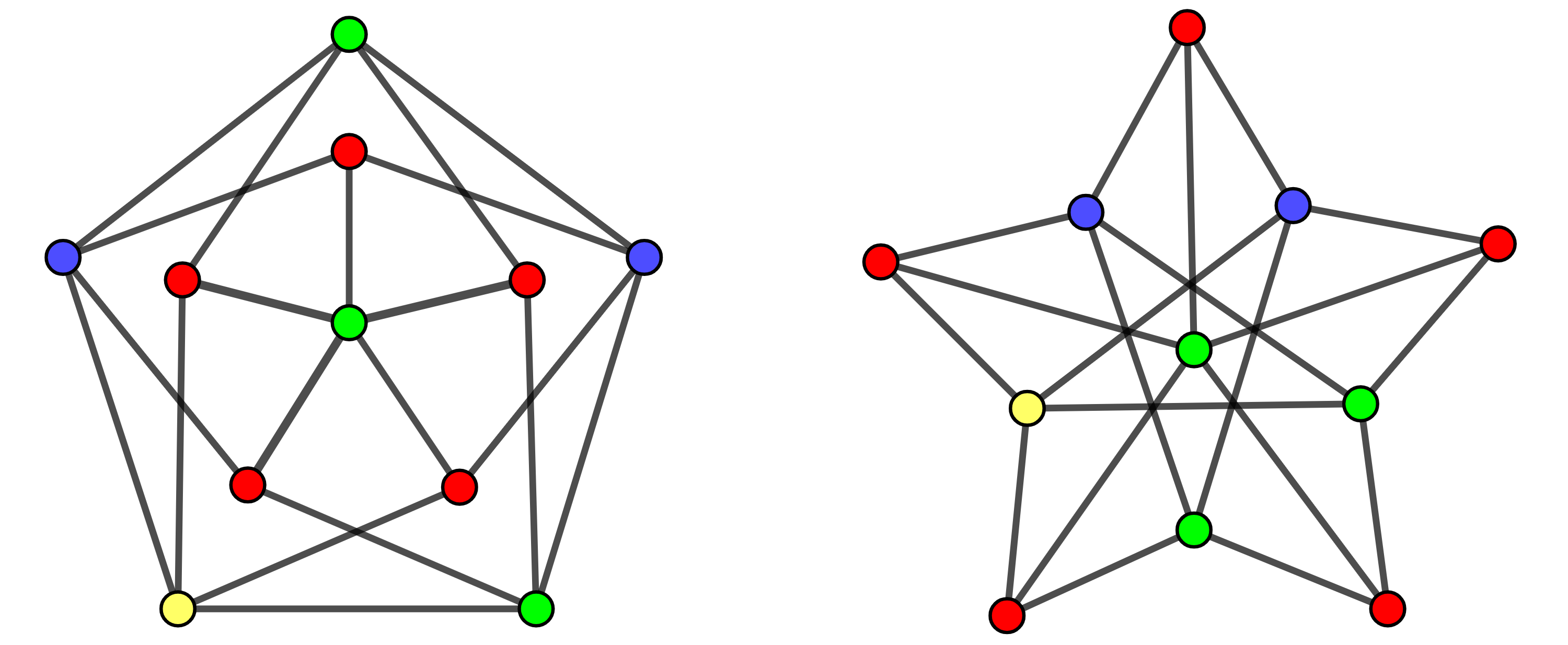}  
\caption{Two drawings of Gr\"{o}tzsch's graph} \label{Fig-Grot} 
\end{figure} 

Recently, Ren, Wang, Wang and Yang \cite{RWWY2024} proved that 
if $G$ is a triangle-free graph on $n\ge 150$ vertices with $\chi (G)\ge 4$, then 
\[  e(G)\le \left\lfloor \frac{(n-3)^2}{4} \right\rfloor +5 , \]
with equality if and only if $G$ is a specific unbalanced blow-up of Gr\"{o}tzsch's graph. 

\medskip 
 Motivated by the above results, 
the following problem naturally arises: 

\begin{problem}
 Among all $n$-vertex triangle-free graphs with chromatic number at least four, which graph achieves the maximum spectral radius? 
\end{problem} 

Intuitively, it is highly likely that the spectral extremal graph is also a specific blow-up of Gr\"{o}tzsch's graph. 
It seems possible that the method in our paper could be applied since one can remove few vertices from the extremal graph to make it   bipartite. 
In addition, we note that the smallest triangle-free graph with chromatic number five can be found in \cite{JR1995}. On the whole, we believe that the spectral extremal problem for triangle-free graph with chromatic number five is also a blow-up of that graph in \cite{JR1995}. We would like to leave this problem to interested readers.

More generally, it is meaningful to extend the aforementioned results by forbidding a general clique. 
Let $r\ge 2$ and $t\ge r+1$.  
We now introduce some known results for a 
 $K_{r+1}$-free graph $G$ with $\chi (G)\ge t$. The case $t=r+1$ has been   extensively studied in the literature. 
In 1974, Andr\'{a}sfai, Erd\H{o}s and S\'{o}s \cite{AES1974} proved that 
    if $r\ge 2$ and $G$ is a $K_{r+1}$-free graph on $n$ vertices with $\chi (G)\ge r+1$, then $\delta (G)\le \frac{3r-4}{3r-1}n$.  Moreover, the bound is the best possible. 
    An alternative proof was provided by Brandt \cite{Bra2003}. 
In 1981, Brouwer \cite{Bro1981} showed that 
if $G$ is a $K_{r+1}$-free graph on 
$n\ge 2r+1$ vertices 
with $\chi (G)\ge r+1$, then 
$ e(G)\le e(T_{n,r}) - \left\lfloor \frac{n}{r} \right\rfloor +1$. 
There are many extremal graphs that attain this bound.  
This result was independently studied in many references, see, e.g., \cite{AFGS2013,KP2005}.
The spectral version was recently established by the third author and Peng \cite{LP2022second}. 
Assume that $V_1,V_2,\ldots ,V_r$ are $r$ vertex parts of $T_{n,r}$, where $|V_1|\le |V_2|\le \cdots \le |V_r|$. Choose $v_0\in V_1$ and 
$u,w\in V_r$. Let $Y_{n,r}$ be the graph obtained from $T_{n,r}$ by adding an edge $uw$ and removing an edge subset $\{wv_0\}\cup \{uv:v\in V_1\setminus \{v_0\}\}$. It was proved in \cite{LP2022second} that 
    if $G$ is an $n$-vertex $K_{r+1}$-free graph with chromatic number $\chi (G)\ge r+1$, then 
    $\lambda (G)\le \lambda (Y_{n,r})$, 
   with equality if and only if $G=  Y_{n,r}$. 
Apart from the above results, 
there is no substantial progress on the spectral extremal problem  for $K_{r+1}$-free graphs with $\chi (G) \ge r+2$.

\section*{Acknowledgements}
The authors would like to thank Xiaoli Yuan and 
Yuejian Peng for sharing the reference \cite{YP2023} before its publication. The authors also thank Xiaocong He and Loujun Yu for reading an earlier draft of this paper. 
 Last but not least, the authors thank the anonymous referees for their careful review and for several valuable suggestions which improved the presentation of our manuscript.
Lihua Feng was supported by the NSFC grant (Nos. 12271527 and 12471022) and the NSF of Qinghai Province (No. 2025-ZJ-902T).
Yongtao Li was supported by the Postdoctoral Fellowship Program of CPSF (No. GZC20233196).

\appendix 

\section{Preliminaries for Theorem \ref{thm-non-bi-cycles}}

The {\it maximum average degree} of a graph $G$  is defined as
$
\mad (G) := \max_{S\subseteq V(G)} {2e(S)}/{|S|}.$
Clearly, we have $\mad(G)\leq \lambda (G)$. A subset $B \subseteq V(G)$ is called {\it critical} if the 
average degree of $G[B]$ is equal to $\mad(G)$. 
The following result was recently proved by Zhang \cite{Zhang-wq-2024} .

\begin{lemma}[See \cite{Zhang-wq-2024}]\label{thm-Zhang}
    For two integers $k \geq1$ and $n\geq 2k + 1$, if $G$ is an $n$-vertex graph with  maximum average degree $\mad(G)\leq 2k$,  then $\lambda(G)<\frac{k-1}{2}+\sqrt{kn+\frac{(k+1)^{2}}{4}}.$
\end{lemma}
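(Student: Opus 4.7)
The claim $\lambda(G)<\frac{k-1}{2}+\sqrt{kn+\frac{(k+1)^2}{4}}$ is equivalent, by completing the square, to the quadratic bound
\[
\lambda^2-(k-1)\lambda-k(n+1)<0,
\]
so the plan is to establish $\lambda^2<(k-1)\lambda+k(n+1)$. Let $\mathbf{x}$ be a non-negative Perron eigenvector for $\lambda=\lambda(G)$, normalized so that $x_{v^*}=\max_v x_v=1$ for some vertex $v^*$. Applying the eigen-equation at $v^*$ gives $\lambda=\sum_{u\sim v^*}x_u\le d(v^*)$, and a second iteration yields the key identity
\[
\lambda^2-(k-1)\lambda=d(v^*)+\sum_{w\in N(v^*)}\bigl(|N(v^*)\cap N(w)|-(k-1)\bigr)x_w+\sum_{w\in R}|N(v^*)\cap N(w)|\,x_w,
\]
where $R=V(G)\setminus(N(v^*)\cup\{v^*\})$.

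Next, I would exploit $\mad(G)\le 2k$ on two judiciously chosen sets. Taking $S=N(v^*)\cup\{v^*\}$ and subtracting off the $d(v^*)$ edges at $v^*$ gives $e(G[N(v^*)])\le(k-1)d(v^*)+k$; taking $S=V\setminus\{v^*\}$ gives $e(G[V\setminus\{v^*\}])\le k(n-1)$. Using the identities $\sum_{w\in N(v^*)}|N(v^*)\cap N(w)|=2e(G[N(v^*)])$ and $\sum_{w\in R}|N(v^*)\cap N(w)|=e(N(v^*),R)$, together with the telescoping rewrite
\[
2e(G[N(v^*)])+e(N(v^*),R)=e(G[N(v^*)])+e(G[V\setminus\{v^*\}])-e(G[R]),
\]
and substituting the bound $x_w\le 1$ throughout, the edge contribution collapses to $e(G[N(v^*)])+k(n-1)\le(k-1)d(v^*)+k+k(n-1)$, giving
\[
\lambda^2\le d(v^*)+(k-1)d(v^*)+k+k(n-1)=k\,d(v^*)+kn.
\]

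The final step converts this degree-dependent inequality into the targeted quadratic in $\lambda$. The trick is that the subtracted $-(k-1)\lambda$ on the left of the original identity, once we keep the Perron weights $\sum_{u\in N(v^*)}x_u=\lambda$ (rather than using the blunt $x_u\le 1$) inside the neighborhood sum, precisely cancels against the $(k-1)d(v^*)$ contribution produced by the $\mad$-bound on $N(v^*)\cup\{v^*\}$. After this cancellation the surplus is controlled by $k(n+1)$ and the quadratic $\lambda^2-(k-1)\lambda<k(n+1)$ follows.

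The main obstacle is exactly this last refinement: the crude estimate $x_w\le 1$ applied uniformly yields only $\lambda^2\le 2kn+O(k)$, off by a factor of $\sqrt{2}$ asymptotically. To extract the sharper coefficient one must split the neighborhood sum according to whether $|N(v^*)\cap N(w)|\ge k-1$ and balance the $\mad$ bound on $N(v^*)\cup\{v^*\}$ against the identity $\sum_{u\in N(v^*)}x_u=\lambda$. The hypothesis $n\ge 2k+1$ is then used to guarantee positivity of the discriminant so that the solution of the quadratic inequality produces a meaningful bound.
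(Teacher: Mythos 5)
The paper does not prove this lemma; it is quoted verbatim from Zhang~\cite{Zhang-wq-2024}, so there is no internal proof to compare against and I evaluate your attempt on its own terms.

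Your setup is sound: the reduction (by completing the square) to $\lambda^2-(k-1)\lambda<k(n+1)$ is correct, as is the two-step eigenvector identity
\[
\lambda^2-(k-1)\lambda=d(v^*)+\sum_{w\in N(v^*)}\bigl(|N(v^*)\cap N(w)|-(k-1)\bigr)x_w+\sum_{w\in R}|N(v^*)\cap N(w)|\,x_w
\]
and the two applications of $\mad(G)\le 2k$ yielding $e(G[N(v^*)])\le(k-1)d(v^*)+k$ and $e(G[V\setminus\{v^*\}])\le k(n-1)$. However, you then candidly record that the uniform estimate $x_w\le 1$ gives only $\lambda^2\le k\,d(v^*)+kn\le 2kn+O(k)$, which is a factor of $\sqrt{2}$ away from the target, and the ``refinement'' you invoke to close this gap is asserted but not carried out. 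This is a genuine gap, not a cosmetic one. The claimed ``precise cancellation'' of $(k-1)\lambda$ against the $(k-1)d(v^*)$ term cannot occur: the $\mad$ bound on $N(v^*)\cup\{v^*\}$ forces the \emph{average} value of $|N(v^*)\cap N(w)|$ over $w\in N(v^*)$ to be near $2(k-1)$, not $k-1$, so after subtracting $(k-1)\lambda$ the neighborhood sum still contributes a surplus of order $(k-1)\lambda$ rather than cancelling. If one actually tracks the computation through your scheme---dropping the negative terms where $|N(v^*)\cap N(w)|<k-1$, using $x_w\le1$ on the rest, and plugging in both $\mad$ bounds in their tight form---one lands on $\lambda^2-(k-1)\lambda\le d(v^*)+kn-e(G[R])$, which is $<k(n+1)$ only when $d(v^*)-e(G[R])<k$. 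That extra hypothesis is false in general (for instance, $K_{2k+1}$ together with $n-2k-1$ isolated vertices has $d(v^*)=2k$ and $e(G[R])=0$), so the scheme does not deduce the lemma. The passage from $\lambda^2\le 2kn$ to $\lambda^2\lesssim kn$ needs an idea that is not present in your outline, and the hypothesis $n\ge 2k+1$ (which you attribute to a discriminant consideration that in fact is never at risk of failing, since the discriminant $(k+1)^2+4kn$ is always positive) is a tell that some further structural input is required.
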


The following result is a direct consequence of 
\cite[Theorem 3.2]{Sun-Das}. 

\begin{lemma}[See \cite{Sun-Das,LN2023}] \label{lem-Sun-Das-deletion}
    Let $G$ be a graph. For every $v\in V(G)$, we have 
    \[ \lambda^{2}(G-v)\geq\lambda^{2}(G)-2d(v).\] 
\end{lemma}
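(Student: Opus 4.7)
The plan is to test the restriction of the Perron eigenvector of $G$ against the matrix $A(G-v)^{2}$ and couple the resulting Rayleigh bound with a Cauchy--Schwarz estimate on the Perron entry at $v$. Let $\bm{x}$ be a unit Perron eigenvector of $A(G)$, write $\bm{x}=(\bm{x}',x_{v})$ with $\bm{x}'$ the restriction of $\bm{x}$ to $V(G)\setminus\{v\}$, and let $a\in\{0,1\}^{V(G)\setminus\{v\}}$ denote the indicator vector of $N(v)$. The degenerate cases $x_{v}=1$ and $V(G-v)=\emptyset$ are set aside at the outset, as the claim is trivial there.

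First I would restrict the eigenequation $A(G)\bm{x}=\lambda(G)\bm{x}$ to the rows outside $v$, obtaining $A(G-v)\bm{x}'=\lambda(G)\bm{x}'-x_{v}a$. Using $(\bm{x}')^{T}a=\sum_{u\in N(v)}x_{u}=\lambda(G)x_{v}$, which is the eigenequation at $v$ itself, a direct expansion of $\|A(G-v)\bm{x}'\|^{2}$ collapses to
$$\|A(G-v)\bm{x}'\|^{2}=\lambda(G)^{2}-\bigl(3\lambda(G)^{2}-d(v)\bigr)x_{v}^{2}.$$
Since $\|\bm{x}'\|^{2}=1-x_{v}^{2}$, applying the Rayleigh principle to the positive semidefinite matrix $A(G-v)^{2}$ yields
$$\lambda(G-v)^{2}\ge \frac{\lambda(G)^{2}-\bigl(3\lambda(G)^{2}-d(v)\bigr)x_{v}^{2}}{1-x_{v}^{2}}.$$
A short algebraic rearrangement shows that this lower bound is at least $\lambda(G)^{2}-2d(v)$ if and only if $\bigl(2\lambda(G)^{2}+d(v)\bigr)x_{v}^{2}\le 2d(v)$.

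To establish the latter inequality I would bound $x_{v}$ via Cauchy--Schwarz applied at the vertex $v$: since $\sum_{u\in N(v)}x_{u}^{2}\le 1-x_{v}^{2}$,
$$\lambda(G)\,x_{v}=\sum_{u\in N(v)}x_{u}\le \sqrt{d(v)\bigl(1-x_{v}^{2}\bigr)},$$
which gives $x_{v}^{2}\le d(v)/(\lambda(G)^{2}+d(v))$. Substituting this estimate into $(2\lambda(G)^{2}+d(v))x_{v}^{2}$ produces the bound $2d(v)$ with no slack, completing the argument.

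The only step requiring any delicacy is the algebraic manipulation that recasts the desired inequality in the form $(2\lambda(G)^{2}+d(v))x_{v}^{2}\le 2d(v)$; the point of the proof is that two rather crude inequalities, namely Rayleigh for $A(G-v)^{2}$ and Cauchy--Schwarz for $x_{v}$, neither tight on its own, combine to yield precisely the constant $2d(v)$.
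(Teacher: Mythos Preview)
Your proof is correct. The paper does not supply its own argument for this lemma; it simply cites it as a direct consequence of \cite[Theorem~3.2]{Sun-Das} (with \cite{LN2023} as a secondary reference), so there is no in-paper proof to compare against. Your derivation---restrict the unit Perron vector to $V(G)\setminus\{v\}$, apply the Rayleigh principle to $A(G-v)^{2}$ to obtain
\[
\lambda(G-v)^{2}\ \ge\ \frac{\lambda(G)^{2}-\bigl(3\lambda(G)^{2}-d(v)\bigr)x_{v}^{2}}{1-x_{v}^{2}},
\]
and then close with the Cauchy--Schwarz estimate $x_{v}^{2}\le d(v)/(\lambda(G)^{2}+d(v))$, which plugs in to give exactly $(2\lambda(G)^{2}+d(v))x_{v}^{2}\le 2d(v)$---is a clean, self-contained route and is in the same spirit as the arguments in those references. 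The algebra and the handling of the degenerate cases $x_{v}=1$ and $V(G-v)=\emptyset$ are fine.
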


The following estimate is straightforward. 

\begin{lemma}
    \label{lem-low-up-bound}
    We have 
    $  \lambda (C_3(T_{n-k-2,2})) <  
    \frac{n-k-2}{2} + \frac{32}{(n-k-3)^2} + \frac{16}{n-k-3}$. 
\end{lemma}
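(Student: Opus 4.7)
Set $M := n-k-2$, so that $G := C_3(T_{M,2})$ has $M+2$ vertices and the target bound reads $\lambda(G) < M/2 + 16/(M-1) + 32/(M-1)^2$. Write the partite classes of $T_{M,2}$ as $A,B$ with sizes $a := \lfloor M/2\rfloor \le b := \lceil M/2\rceil$, denote by $v_0 \in A$ the vertex of the smaller side that is identified with a vertex of the triangle, and let $u_1, u_2$ be the remaining two triangle vertices, so the extra edges are $v_0 u_1, v_0 u_2, u_1 u_2$. From this picture the degree sequence is transparent: $d(v_0) = b+2$, $d(v) = b$ for $v \in A\setminus\{v_0\}$, $d(v) = a$ for $v \in B$, and $d(u_1) = d(u_2) = 2$.

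The plan is then to invoke the classical row-sum inequality $\lambda(G)^2 \le \max_{v\in V(G)} \sum_{w\in N(v)} d_G(w)$, which follows from $\lambda(G)^2 = \lambda(A(G)^2)$ together with the Perron bound applied to the non-negative matrix $A(G)^2$. A class-by-class computation gives $\sum_{w\in N(v_0)} d(w) = b\cdot a + 2\cdot 2 = ab + 4$, while the analogous sums at $v\in A\setminus\{v_0\}$, $v\in B$, and $v\in\{u_1,u_2\}$ equal $ab$, $(a-1)b+(b+2) = ab + 2$, and $(b+2)+2 = b+4$, respectively. Hence the overall maximum is $ab+4$, attained at $v_0$, and combining this with $ab \le M^2/4$ I obtain $\lambda(G)^2 \le M^2/4 + 4$.

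The final step is the elementary estimate $\sqrt{x^2+y} \le x + y/(2x)$ applied with $x = M/2$ and $y = 4$, which yields $\lambda(G) \le M/2 + 4/M$; it is then a one-line check that $4/M < 16/(M-1) + 32/(M-1)^2$ for every $M \ge 2$, proving the lemma (with a slightly stronger bound, in fact). The argument is essentially a short calculation, so there is no genuine obstacle; the only bookkeeping point is correctly identifying the four equitable classes of vertices and their neighbour-degree sums before applying the $A(G)^2$ row-sum bound.
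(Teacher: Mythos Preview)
Your proof is correct and takes a genuinely different route from the paper. The paper argues via the Perron eigenvector: it writes $H=C_3(T_{M,2})$, removes the three triangle edges to get a bipartite graph $H'$ with $\lambda(H')\le M/2$, then uses the Rayleigh quotient inequality $\lambda(H)\le \lambda(H')+2(x_{v_1}x_{v_2}+x_{v_2}x_{v_3}+x_{v_1}x_{v_3})/\bm{x}^T\bm{x}$ together with the eigenvector estimate $x_{v_1}=x_{v_2}<4/(M-1)$ (obtained from $\lambda(H)x_{v_1}=x_{v_2}+x_{v_3}\le 2$) and the trivial bound $\bm{x}^T\bm{x}\ge 1$. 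This is exactly how the terms $16/(M-1)$ and $32/(M-1)^2$ arise.

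Your approach avoids eigenvectors entirely: the row-sum bound on $A(G)^2$ reduces the problem to a pure degree computation, which you carry out correctly class by class to get $\lambda(G)^2\le ab+4\le M^2/4+4$, and the square-root step gives the tighter estimate $\lambda(G)\le M/2+4/M$. This is shorter, more elementary, and yields a strictly stronger inequality than the one stated in the lemma. The paper's method, on the other hand, is the same Rayleigh-plus-eigenvector machinery used repeatedly elsewhere in the paper (e.g.\ in Lemma~\ref{lem-balanced}), so its proof is more ``of a piece'' with the surrounding arguments even if less efficient here.
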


\begin{proof}
    Let $H=C_3(T_{n-k-2,2})$ and $V(H)=\{v_1,v_2,\ldots,v_{n-k}\}$, where $v_1,v_2,v_3$ form a triangle of $H$, and $d(v_1)=d(v_2)=2$ and $d(v_3)= \lceil \frac{n-k-2}{2} \rceil +2$. Let $\bm{x}$ be the Perron eigenvector of  $\lambda(H)$ with the maximum coordinate $\max_{i\ge 1} \{x_i\} = x_{v_3}=1$. 
    Let $H'$ be the bipartite graph obtained by removing the triangle of $H$. 
    Then 
    \[ \lambda(H')\le \frac{n-k-2}{2}. \] 
    By the Rayleigh quotient, we get  
    \[ \lambda(H) \le \lambda (H') + \frac{2(x_{v_1}x_{v_2}+x_{v_2}x_{v_3}+x_{v_1}x_{v_3})}{\bm{x}^T\bm{x}}. \]  
    Since $\lambda (H) x_{v_1}=x_{v_2}+ x_{v_3}\le 2$ and 
    $\lambda(H)> \lambda (H') \ge \lfloor \frac{n-k-2}{2} \rfloor$, we get  $x_{v_1}=x_{v_2}< \frac{4}{n-k-3}$. Therefore, we have 
    $\lambda (H)< \frac{n-k-2}{2} + \frac{32}{(n-k-3)^2}  + \frac{16}{n-k-3}$, as needed. 
\end{proof}

{Let $ec(G)$ and 
$oc(G)$ be the lengths of a longest even cycle and longest odd cycle in $G$, respectively.} The following two results are key ingredients  in our proof of \Cref{thm-non-bi-cycles}. 

\begin{theorem}[See \cite{Voss}]\label{thm-Voss-Zuluaga}
(1) Every 2-connected graph $G$ with $\delta(G)\geq k\geq3$ having at least $2k+1$ vertices satisfies
that $ec( G) \geq 2k.$
(2) Every 2-connected non-bipartite graph $G$ with $\delta(G)\geq k\geq3$ having at least $2k+1$
vertices satisfies that oc$(G)\geq2k-1.$ 
\end{theorem}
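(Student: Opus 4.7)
The plan is to combine a Dirac-type circumference bound with an ear/chord analysis of a longest cycle in order to produce cycles of both parities of the required length. First, in any 2-connected graph with $\delta(G)\ge k$ and $n\ge 2k+1$ one has $c(G)\ge 2k$; this follows from the classical longest-path argument (both endpoints of a longest path have all $\ge k$ neighbors on the path, and 2-connectivity lets one close an arc of length at least $2k$, in the style of Dirac's theorem). Fix a longest cycle $C$ of length $c\ge 2k$. If $c$ is even then $ec(G)\ge c\ge 2k$, giving (1); if $c$ is odd then $c\ge 2k+1$ and $oc(G)\ge c\ge 2k-1$, giving (2). So in each part only one ``wrong parity'' case remains: produce an even cycle of length $\ge 2k$ when $c$ is odd (for (1)), and an odd cycle of length $\ge 2k-1$ when $c$ is even (here invoking the non-bipartiteness hypothesis of (2)).

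In the wrong-parity scenario I would exploit 2-connectivity to extract either a chord of $C$ or an ear $P$ attached to $C$ at endpoints $u,v$ (an ear exists whenever $V(G)\neq V(C)$; otherwise $\delta\ge k\ge 3$ forces many chords). Either way, the two arcs $A_1,A_2$ of $C$ between $u$ and $v$ satisfy $|A_1|+|A_2|=c$, and combining each arc with $P$ (or the chord, viewed as a path of length one) produces two cycles whose lengths differ by $|A_1|-|A_2|$. For part (1), since $c$ is odd the arcs $A_1,A_2$ have opposite parities, so the two replacement cycles have opposite parities and one of them is even. For part (2), non-bipartiteness supplies some odd closed structure $H$; 2-connectivity then yields two internally disjoint paths from $H$ into $C$, and gluing along $C$ again gives two cycles of opposite parities, one of which is odd of length at least $2k-1$ by a parallel arc analysis.

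The main obstacle is the quantitative step inside the wrong-parity scenario: showing that the produced cycle of the target parity actually has the required length. In the tight regime (longest cycle of length $2k+1$ for (1), or $2k$ for (2)), both arcs of $C$ are short and the maximality of $C$ forbids long bypasses, so one cannot succeed with an arbitrarily chosen chord. The real content of Voss--Zuluaga is to exploit $\delta(G)\ge k$ systematically: the $\ge k-2$ chords at each vertex of $C$ generate a rich family of arc-plus-bypass cycles, and one must do a parity-and-position case analysis on chord endpoints modulo the cyclic order to extract one whose bypass pairs with an arc of the desired parity and of length at least $2k$ (respectively $2k-1$). Keeping this chord-parity bookkeeping tight, and ruling out the pathological configurations in the extremal regime, is the crux of the proof.
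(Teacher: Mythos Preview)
The paper does not prove this statement at all: it is quoted as a black-box result from Voss and Zuluaga (the citation \texttt{[Voss]}), and is used only as an input in Appendix~B. So there is no ``paper's own proof'' to compare your attempt against.

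As for your outline on its own merits: the first half is fine---in a $2$-connected graph with $\delta(G)\ge k$ and $|G|\ge 2k+1$ one indeed gets $c(G)\ge 2k$ by the standard Dirac/longest-path argument, and this immediately disposes of the ``right-parity'' cases. The genuine content, as you yourself say, is the ``wrong-parity'' case, and here your write-up is not a proof but a description of what a proof would have to do. Producing \emph{some} cycle of the other parity via a chord or an ear is easy; what you have not supplied is any mechanism guaranteeing that cycle has length at least $2k$ (resp.\ $2k-1$). In the tight regime---$c=2k+1$ for part (1), $c=2k$ for part (2)---a single chord can cut off an arc of length $1$, giving only a cycle of length $c-1+1=c$ or worse, and an arbitrary ear can attach at nearby points. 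The Voss--Zuluaga argument really does require a careful case analysis of the positions of the $\ge k$ neighbours of consecutive vertices along $C$ to force a chord or bypass whose complementary arc is long enough and of the right parity; saying that this ``is the crux of the proof'' is accurate, but it means the proposal stops exactly where the difficulty begins.
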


\begin{theorem}[See \cite{Gould-Haxell-Scott}] 
\label{thm-Gould-Haxell-Scott}
    For every $c>0$, there is a constant $K:=K(c)$ such that the following holds. Let $G$ be a graph on $n\geq\frac{45{K}}{c^4}$ vertices with $\delta (G)\ge cn$. Then $G$ contains a cycle of length $t$ for every even $t\in[4,ec(G)-K]$ and every odd $t\in[K,oc(G)-K].$
\end{theorem}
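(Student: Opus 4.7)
The plan is to combine a Pósa-style rotation–extension scheme with chord analysis on longest cycles of each parity, plus a direct counting argument to anchor the short-even case. Throughout, let $e := ec(G)$ and $o := oc(G)$, and choose $K = K(c)$ to be a sufficiently large polynomial in $1/c$, so that any ``gap'' terms we incur along the way are absorbed into $K$.

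First I would dispose of the short even cycles. For $C_4$, the number of cherries satisfies $\sum_v \binom{d(v)}{2} \ge n\binom{cn}{2}$, which dominates $\binom{n}{2}$ for $n \ge 2/c^2$, so by pigeonhole two vertices share at least two common neighbors. For each even length $2k$ with $4 \le 2k \le K$, I would invoke the Bondy--Simonovits even-cycle theorem: since $e(G) \ge \tfrac{c}{2}n^2$ comfortably exceeds $\Theta(n^{1+1/k})$, a copy of $C_{2k}$ exists. This handles the entire bottom end $[4,K]$ of the even range, independently of what $ec(G)$ looks like.

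Next, to reach every length in the much longer intermediate range, I would use rotation–extension. Take a longest path $P = v_0 v_1 \cdots v_\ell$ in $G$. By maximality, all $\ge cn$ neighbors of $v_0$ lie on $P$; let $I := \{i : v_0 v_i \in E(G)\}$, so $|I| \ge cn$. Each $i \in I$ realises a cycle of length $i$ on $V(P)$ and yields a rotated path of the same length with new endpoint $v_{i-1}$. Iterating the rotation produces a set $A$ of ``reachable'' endpoints of size $\Omega(cn)$; applying the same reasoning to each new endpoint expands $A$ further, and one shows (by a standard expander-type argument) that the set of realised cycle lengths is dense in an interval of length $\Omega(cn)$ at the top of $[1,\ell]$. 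To splice this rotation range with the chord–shortening done near $e$ (respectively near $o$ for odd cycles), I would analyse chords of a longest cycle $C$ of the right parity: a chord $v_iv_j$ on $C$ creates two sub-cycles summing in length to $|C|+2$, and careful bookkeeping over chord positions hits every length in $[|C|-K,|C|]$ of the correct parity. Combining the two mechanisms shows that all even $t \in [4, e-K]$ and all odd $t \in [K, o-K]$ are achieved.

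The main obstacle is parity control in the rotation step: a single rotation can change the induced cycle length by an arbitrary amount, so one naturally produces cycles of many lengths but not necessarily every consecutive even or odd length. To force consecutive lengths of the correct parity, I would pair each rotation with a short detour through a common-neighbour structure (guaranteed by $\delta(G) \ge cn$), enabling a length change of exactly $2$ rather than of varying size. The extra vertices used by these detours, together with the boundary loss in the chord analysis near $e$ and $o$, are precisely what the constant $K$ is absorbing; similarly, the lower bound $K$ in the odd range reflects that rotation on a near-bipartite dense subgraph cannot shrink an odd cycle below a constant threshold without additional triangle-type structure.
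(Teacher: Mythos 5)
This theorem is cited verbatim from Gould, Haxell and Scott~\cite{Gould-Haxell-Scott}; the paper quotes it without proof, so there is no in-paper argument to compare against. Assessed on its own, your short-even-cycle step via the Bondy--Simonovits theorem is fine, but the rest of the sketch has genuine gaps that are not mere bookkeeping.

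The chord step near $ec(G)$ and $oc(G)$ does not hold as stated. A chord $v_iv_j$ ($i<j$) of an $m$-cycle $C=v_0\cdots v_{m-1}v_0$ produces sub-cycles of lengths $j-i+1$ and $m-(j-i)+1$, so to realise a length in $[m-K,m]$ you need a chord whose gap $j-i$ lies in $\{1,\dots,K+1\}$ or $\{m-K-1,\dots,m-1\}$, i.e.\ a \emph{short} chord, and there is no reason for a longest cycle of a prescribed parity to carry short chords of every small gap value and parity. Minimum degree $cn$ forces many chords on $C$, but not a catalogue of short gaps; ``careful bookkeeping over chord positions hits every length'' is essentially the statement of the theorem, not a proof of it. Your parity repair also fails at the first step: rerouting an edge $uv$ through a common neighbour $w$ changes cycle length by $1$, not by $2$, so it flips parity rather than preserving it; to change length by exactly $2$ you would have to reroute two edges or insert two vertices, and you would further have to certify those vertices lie off the current cycle, which the degree hypothesis alone does not guarantee once the cycle already spans $\Theta(n)$ vertices. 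Finally, the rotation--extension paragraph produces many cycle lengths supported on a longest path, but says nothing about how those lengths relate to $ec(G)$ or $oc(G)$: the longest even or odd cycle is a priori unrelated to the longest path, and the rotation can stall far below either circumference. These three missing pieces --- short chords of both parities, parity-preserving surgery, and anchoring the realised lengths at $ec(G)/oc(G)$ --- are exactly what the Gould--Haxell--Scott argument supplies and your sketch does not.
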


\section{Proof of Theorem \ref{thm-non-bi-cycles}}    

\label{App-B}

Our strategy proceeds in the following three steps: 
\begin{enumerate}
    \item 
    Using a technique of Zhang \cite{Zhang-wq-2024}, 
    we can prove by Lemma \ref{thm-Zhang} that 
    $\mad (G)> 2(\frac{1}{6} - \varepsilon_1)n$ and 
    $G$ contains an induced subgraph $G_0$ with average degree $d(G_0)\ge \mad (G)$. 

    \item 
    Following the classical method of Li and Ning \cite{LN2023}, we show that $G_0$ contains an induced subgraph $H$ with  minimum degree $\delta (H)\ge \frac{1}{2}d(G_0)$ and average degree $d(H)\ge d(G_0)$. 
    Moreover, we can prove that 
    $\lambda (H)> \frac{1}{4}(n^2-\frac{4}{3}kn -4n +3)$, where $k:=|V(G)\setminus V(H)|$. 

    \item 
    If $H$ is $2$-connected, then applying 
    Theorems \ref{thm-Voss-Zuluaga} and 
    \ref{thm-Gould-Haxell-Scott}, we are done. 
    If $H$ is not $2$-connected, then 
    deleting few cut-vertices from $H$ iteratively, we can reach a subgraph $F$ whose components are $2$-connected. 
    Then Theorems \ref{thm-Voss-Zuluaga} and 
    \ref{thm-Gould-Haxell-Scott} could be applied. 
\end{enumerate}
 
 Let $c=\frac{1}{7}$ and $K=K(\frac{1}{7})$ be given as in Theorem \ref{thm-non-bi-cycles}. Let $0<\varepsilon<\frac{1}{3}$ and  $\varepsilon_1=\varepsilon/4$ be fixed. Let $N(\varepsilon)$ be sufficiently large such that all the inequalities appeared in the following hold when $n\geq N(\varepsilon)$. For convenience, we may  assume that $n$ is sufficiently large. 

Firstly, we prove that $\mad (G)> 2(\frac16-\varepsilon_1)n.$
Let $k$ be the smallest positive integer such that $k\geq \frac{1}{2} \mad(G)$.  
Since $\lambda(G)\geq \lambda(C_3(T_{n-2,2}))> \sqrt{\lfloor\frac{(n-2)^2}{4} \rfloor}$,  \Cref{thm-Zhang} yields 
$$
\sqrt{\frac{(n-2)^2-1}{4}} <  
\lambda(G)<\frac{k-1}{2}+\sqrt{kn+\frac{(k+1)^2}{4}}.$$
By simplifying, we have 
$$
k>\frac{\frac{(n-2)^2-1}{4}+\sqrt{\frac{(n-2)^2-1}{4}}}{n+1+\sqrt{\frac{(n-2)^2-1}{4}}} 
> \left(\frac{1}{6} - \frac{\varepsilon_1}{2}\right)n.
$$
By the definition of $k$, we get $\frac{1}{2}\mad(G)>\lfloor (\frac{1}{6}- \frac{\varepsilon_1}{2})n \rfloor>
(\frac{1}{6}-\varepsilon_1)n$.

Among all critical subsets of $V(G)$, we choose $B$ as a subset with maximum cardinality such that $2e(B) \geq2 (\frac 16-\varepsilon_1)n|B|$. In the sequel, we denote $G_{0}=G[B]$ for simplicity. 

If $G_0\neq G$, then we denote $V(G) \setminus V(G_0)=\left\{u_1,u_2,...,u_\ell\right\}$, where $\ell :=|V(G)\setminus V(G_0)|\geq1.$ For each $ i\leq\ell$, let $G_i$ be the subgraph of $G$ induced by $V(G_0)\cup\{u_{1},u_{2},...,u_{i}\}.$ If
$\sum_{i=1}^{\ell} d_{G_i}(u_i)>(\frac{1}{6}-\varepsilon_1)n\ell$, 
then
$2e(G)>2e(G_0)+2(\frac{1}{6}-\varepsilon_1)n\ell\ge 2(\frac{1}{6}-\varepsilon_1)n|V(G)|$. 
The maximality of $G_0$ yields $G_0=G$, contradicting with the assumption. Thus, we have 
\begin{equation}
    \label{eq-ell}
    \sum_{i=1}^{\ell} d_{G_i}(u_i)\leq \left(\frac{1}{6}-\varepsilon_1 \right)n\ell . 
\end{equation}
If $G_0=G$, then the inequality (\ref{eq-ell}) holds trivially by setting $\ell=0$ and
$\sum_{i=1}^{\ell} d_{G_{i}}(u_{i})=0$. 
In the remaining of our proof, we always admit the inequality (\ref{eq-ell}).

Let $H$ be a subgraph of $G_0$ defined by a sequence of graphs $H_0,H_1,\ldots,H_s$ such that:
\begin{enumerate}
    \item $H_0=G_0$, $H=H_s$;
    \item for every $i\le s-1$, there is $v_i\in V(H_i)$ such that $d_{H_i}(v_i)< (\frac{1}{6}-\varepsilon_1)n$ and $H_{i+1}=H_{i}-v_i$;
    \item  for every $v\in V(H_s)$, we have $ d_{H_s}(v)\geq(\frac{1}{6}-\varepsilon_1)n$. 
\end{enumerate} 
Obviously, we have  $\delta(H)\geq(\frac{1}{6}-\varepsilon_1)n$ and $d(H) \ge d(G_0)\geq 2(\frac{1}{6}-\varepsilon_1)n$. Moreover, 
\begin{equation}
    \label{eq-s}
    \sum_{i=0}^{s-1} d_{H_i}(v_i) \le s\left(\frac{1}{6} - \varepsilon_1 \right)n.
\end{equation}  
    We denote $k:=\ell +s$. 
     Using Lemma \ref{lem-Sun-Das-deletion}, together with (\ref{eq-ell}) and (\ref{eq-s}) yields 
    $$
\frac{(n-2)^2-1}{4}\leq\lambda^2(G)\leq\lambda^2(H) 
+2\sum_{i=1}^{\ell} d_{G_i}(u_i) + 2\sum_{i=0}^{s-1} d_{H_i}(v_i)<\lambda^2(H)+\frac{kn}{3}. 
    $$
 This implies that 
    \begin{equation}
        \label{eq-H-satisfy}
         \lambda^2(H)> \frac{n^2-(4kn)/3-4n+3}{4}>\lambda^2(C_3(T_{n-k-2},2)), 
    \end{equation}
    where the last inequality hods by \Cref{lem-low-up-bound}. 

    \fbox{{\bf Finding even cycles.}} 
 
{\bf Case A. $\bm{H}$ is $\bm{2}$-connected.} 

    Note that $|H|> d(H)\ge (\frac13-2\varepsilon_2)n $. 
    Then  \Cref{thm-Voss-Zuluaga} gives $ec\left(H\right)\geq\lfloor(\frac 13-2\varepsilon_1)n\rfloor\geq (\frac{1}{3}-2\varepsilon_1)n-1.$ Recall that $\delta(H)\geq(\frac{1}{6}-\varepsilon_1)n\geq \frac{1}{7}|H|$. By Theorem \ref{thm-Gould-Haxell-Scott}, $H$ contains all even cycles $C_{\ell}$ with $\ell\in[4,ec(H)-K]$ if $|H|\geq 45\cdot7^4\cdot K$. 
    Let $n_1$ be an integer satisfying: 
\[ \text{(i) $(\tfrac{1}{3}-2\varepsilon_1)n_1-1\geq 45\cdot7^4\cdot K$; \quad ({ii})  $2\varepsilon_1 n_1\geq K+1$.} \]
Then for $n\geq n_1$, we see that $G$ contains all even cycles $C_{\ell}$ for $\ell\in[4,(\frac{1}{3}-\varepsilon)n]$. 

{\bf Case B. $\bm{H}$ is not $\bm{2}$-connected.} 

In this case, we can delete few cut-vertices from $H$ such that all its components are $2$-connected. 
    Let $F$ be a subgraph of $H$ defined by a sequence of graphs $F_0,F_1,\ldots,F_b$ such that:
    \begin{enumerate}
        \item $H=F_0$ and $F=F_b$;
        \item   for every $i\leq b-1$, there is a cut-vertex $u_i$ of $F_i$ and $F_{i+1}=F_i-u_i$;
        \item $F_b$ has no cut-vertex.
    \end{enumerate} 
    Observe that there are at most $6$ components in $H$.  Otherwise, by averaging, there exists a vertex (in a  minimal component of $H$) that has degree at most $\frac{n}{7}$, contradicting with 
    $\delta (H)\ge (\frac{1}{6}- \varepsilon_1)n$. Hence, $H$ has at most $6$ components.  
        Deleting a cut-vertex in each step increases at least one component of a graph.  
    Similarly, there are at most $6$ 
 components in the terminated graph $F$, 
 and every component of $F$ is $2$-connected. Indeed, once we have deleted $6$ vertices from $H$ iteratively, we can reach at least $7$ components in $F_6$, so there exists a vertex with degree at most $\frac{n}{7}$, contradicting with $\delta (F_6)\ge \delta (H)-6$. 
   Thus, $F$ has at most $6$ components as well. Consequently, we have $b=|V(H) \setminus V(F)|\le 5.$ 
 Recall that $d(H)\geq 2(\frac{1}{6}-2\varepsilon_1)n$. We denote 
 $h:=|H|$. 
   Note that 
    $$
    2e(F)\geq 2e(H)-2\sum_{i=0}^{b-1} d_H(u_i)>  2\left(\frac{1}{6}-\varepsilon_1 \right)n  h-10h.
    $$
The Rayleigh formula yields $ \lambda (F)\ge d(F) = 
\frac{2e(F)}{|F|}> \frac{2e(F)}{|H|}> (\frac{1}{3}-2\varepsilon_1)n-10$. Then $|F|\geq d(F)>(\frac{1}{3}-2\varepsilon_1)n-10$. By \Cref{thm-Voss-Zuluaga}, we have $ec(F)\geq \min\{2\delta(F)-1,|F|\}\geq \min\{(\frac{1}{3}-2\varepsilon_1)n-11,|F|\}=(\frac{1}{3}-2\varepsilon_1)n-11$. By Theorem \ref{thm-Gould-Haxell-Scott}, $F$ contains all even cycles $C_{\ell}$ with $\ell\in[4,ec(F)-K]$ if $|V(F)|\geq 45\cdot7^4\cdot K$. 
    Let $n_2$ be an integer satisfying: 
\[ \text{(i) $(\tfrac{1}{3}-2\varepsilon_1)n_2-11\geq 45\cdot7^4\cdot K$; \quad ({ii})  $2\varepsilon_1 n_2\geq K+11$.} \]
Then for $n\geq n_2$, we see that $G$ contains all even cycles $C_{\ell}$ for $\ell\in[4,(\frac{1}{3}-\varepsilon)n]$.

\fbox{{ \bf Finding odd cycles.}} 
    
{{\bf Case 1. $\bm{H}$ is not $\bm{2}$-connected.}} 

Similar to the argument in previous case, 
let $F$ be a graph obtained from $H$ by deleting cut-vertices iteratively.  
    Let $Q_1$ be a component of $F$ with $\lambda(Q_1)=\lambda(F).$     
    Then $\delta(Q_1)\geq \delta (H) -5\ge (\frac13-2\varepsilon)n-5$ and $\lambda(Q_1)>(\frac13-2\varepsilon_1)n-10.$ Since $\lambda (Q_1)\le |Q_1|-1$, we have $|Q_1|>(\frac13-2\varepsilon_1)n-9.$ 
    In what follows, we prove that $Q_1$ is non-bipartite.  Let $Q_2$ be another component of $F$. Since $\delta(F)\geq(\frac 16-\varepsilon_1)n-5$, we have $| Q_2| \geq (\frac{1}{6}-\varepsilon_1)n-5$. This implies that $|Q_1|\leq h-5-(\frac{1}{6}-\varepsilon_1)n+5<h-\frac{1}{6}n+\varepsilon_1n.$ 
    If $(\frac{1}{3}-2\varepsilon_1)n-10>\frac{1}{2}(h-\frac{n}{6}+\varepsilon_1n)$, then $\lambda(Q_1)>\frac{1}{2}|Q_1|$. 
    It follows that $Q_1$ is non-bipartite. Next, we consider the case $(\frac{1}{3}-2\varepsilon_1)n-10\leq \frac{1}{2}(h-\frac{n}{6}+\varepsilon_1n)$, that is, $h\geq \frac 56n-5\varepsilon_1n- 20.$ By Lemma \ref{lem-Sun-Das-deletion} and (\ref{eq-H-satisfy}),  it is easy to see that
    \begin{align*}
\lambda^2(Q_1)=\lambda^2(F)\geq\lambda^2(H)-2\sum_{i=0}^{b-1}d_{F_i}(u_i) \geq \frac{n^2-(4kn/3)-4n+3}{4}-10h >\frac{(h-\frac{1}{6}n+\varepsilon_1n)^2}{4}
    \end{align*}
    Thus, $Q_1$ is non-bipartite.

    By Theorem \ref{thm-Voss-Zuluaga}, we have $oc(Q_1)\geq\min\{2\delta(Q_1)-1,|Q_1|\}\geq\min\{(\frac 13-2\varepsilon_1)-11,\lambda(F_1)\}=(\frac 13-2\varepsilon_1)n-12.$ By Theorem \ref{thm-Gould-Haxell-Scott}, $Q_1$ contains all odd cycles $C_{\ell}$ for $\ell\in[K,(\frac{1}{3}-2\varepsilon_1)n-12-K]$ if $|Q_1|\geq45\cdot7^4\cdot K$. 
    A result of Zhai and Lin \cite{ZL2022jgt} states that every graph $G$ of order $n$ with  $\lambda(G)>\sqrt{\lfloor\frac{n^2}4\rfloor}$ has a cycle  $C_{\ell}$ for every $\ell\in[3,\frac{n}{7}].$ Let $n_3$ be an integer such that
$$({\rm i})\ (\tfrac{1}{3}-2\varepsilon_1)n_3-5\geq 45\cdot7^4\cdot K;\quad  ({\rm ii})\  n_3\geq 320K;\quad  
({\rm iii})\ 2\varepsilon_1 n_3\geq K+12.$$
If $n\geq n_3$, then $G$ contains all odd cycles $C_{\ell}$ for $\ell\in[5,(\frac{1}{3}-\varepsilon)n]$.

 {{\bf Case 2. $\bm{H}$ is $\bm{2}$-connected.}}  
  
 {\bf Subcase 2.1.} If $H$ is non-bipartite, then  Theorem \ref{thm-Voss-Zuluaga} implies $oc(H)\geq (\frac{1}{3}-2\varepsilon_1)n-3$. By Theorem \ref{thm-Gould-Haxell-Scott}, $H$ contains all odd cycles $C_{\ell}$ for $\ell\in[K,(\frac{1}{3}-2\varepsilon_1)n-3-K]$ if $|H|\geq45\cdot7^4\cdot K$. In view of (\ref{eq-H-satisfy}), 
  \Cref{cor-conse-odd-cycles} yields that $H$ contains all odd cycles $C_{\ell}$ for $\ell\in[5,\frac{n}{187}]$. Let $n_4$ be an integer such that
$$({\rm i})\ (\tfrac{1}{3}-2\varepsilon_1)n_4+1\geq 45\cdot7^4\cdot K;\  ({\rm ii})\ n_4\geq 187K;\  
({\rm iii})\ 2\varepsilon_1 n_4\geq K+3.$$
If $n\geq n_3$, then $G$ contains all odd cycles $C_{\ell}$ for $\ell\in[5,(\frac{1}{3}-\varepsilon)n]$, as needed. 

We point out here that 
the difficulty lies in finding odd cycles with consecutive lengths whenever $H$ is a bipartite graph. Our strategy is to show that $G$ contains an odd cycle $C$ that intersects the bipartite graph $H$ in at least two vertices. Since $H$ is dense, 
we can extend the cycle $C$ to a long cycle $C_{\ell}$ for every odd $\ell \in [5,(\frac{1}{3}-\varepsilon)n]$. 
The details are stated as below. 

{\bf Subcase 2.2.}  If $H$ is bipartite, then we first show that 
$|V(G)\setminus V(H)|\le 6$.
Recall that $H$ is a subgraph of $G$ with $|H|=n-k$ and $\lambda^2(H)\geq \frac{n^2-(4kn)/3-4n+3}{4}$. It is easy to see that $ k\leq 6$ when $n\geq 69$ (otherwise, $\lambda(H)^2\geq \frac{n^2-(4kn)/3-4n+3}{4}>\frac{(n-k)^2}{4}$, $H$ is non-bipartite, a contradiction). By Lemma \ref{lem-Nosal}, we have $e(H)\geq \lambda^2(H)\geq \frac{n^2-(4kn)/3-4n+3}{4}$. Let $V(H)=V_1\sqcup V_2$ be the partition.
  Then $\frac{n-k}{2}-\sqrt{n}\leq 
  |V_1|,|V_2|\leq\frac{n-k}{2}+\sqrt{n}$ (otherwise, $e(H)\leq |V_1|\cdot|V_2|<\frac{n^2-(4kn)/3-4n+3}{4}$, a contradiction). Let $L=\{v\in V(H):d_H(v)\leq \frac{n-k}
  {2}-\frac{5}{4}\sqrt{n}\}$. We claim that $|L|\leq \sqrt{n}$. Otherwise, there exists a subset $S$ of $L$ such that $|S|=\lfloor\sqrt{n}\rfloor$. Then, we have
    \begin{align*}
e(H - S) \geq  \frac{n^2-(4kn)/3-4n+3}{4} - \sqrt{n}\left(\frac{n-k}{2}-\frac{5}{4}\sqrt{n} \right) 
>\frac{(n-k-\lfloor\sqrt{n}\rfloor)^2}{4}.
\end{align*}
Mantel's theorem implies that $H$ has a triangle. This is impossible since $H$ is bipartite.

\begin{claim}\label{clm-cycle-length}
      $G$ contains all odd cycles $C_{\ell}$ with $\ell\in[5,(\frac{1}{3}-\varepsilon)n]$.
\end{claim}

\begin{proof}
We first show that $G$ contains an odd cycle $C$ such that $|V(C) \cap V(H)|\ge 3$.  
Indeed, 
\Cref{cor-conse-odd-cycles} implies $G$ contains all odd cycles $C_{\ell}$ with $\ell\in[5,\frac{n}{187}]$. For $n\geq 1683$, we see that $G$ has a copy of $C_9$. Recall that $k=|V(G)\setminus V(H)|\leq 6$. Then we have $|V(C)\cap V(H)|\geq 3$. 

Let $C=v_1v_2\ldots v_{\ell}v_1$ be an odd cycle with $|V(C)\cap V(H)|\geq 2$. Since $H$ is bipartite, we may assume that $v_1\in V(H)$ and $v_2\in V(G)\setminus V(H)$. Denote $v_t$ be the minimum element of $V(C)$ such that $v_2,\ldots,v_{t-1}\in V(G)\setminus V(H)$ and $v_t\in H$. Since $|V(G)\setminus V(H)|\leq 6$, we have $t\leq 8$.

If $t$ is odd, then the path $v_1v_2\cdots v_t$ has even length. Expanding to the odd cycle $C$, the vertices $v_1,v_t$ must be located in different vertex parts of $H$. If $t$ is even, then $v_1,v_t$ are located in the same vertex part of $H$. 
Next, we only illustrate the case that $t$ is even and $v_1,v_t\in V_1$, since the argument in another case is similar. 
For every odd integer $p\in[11,(\frac{1}{3}-\varepsilon)n]$, we find greedily a path of length $p-t+1$ in $H$ starting from $v_1$ and ending at $v_t$. This leads to a copy of $C_{p}$. Indeed, we denote $v_1:=u_1$ and $v_t:=u_{p-t+2}$. 
By computation, we have 
     $|N_{V_2}(u_1)\setminus L|\ge 
     \delta (H) - |L| \ge 
   (\frac{1}{6}-\varepsilon_1)n -\sqrt{n}>0. $ 
     Then we can choose a vertex $u_2\in N_{V_2}(u_1)\setminus L$. 
     Since $u_2\notin L$, we have $d_{V_1}(u_2)\ge \frac{n-k}{2} - \frac{5}{4}\sqrt{n}$. Then  
     \begin{align*}
     |N_{V_1}(u_2)\setminus  L| 
     \ge 
    d_{V_1}(u_2)- |L| \ge 
     \frac{1}{2}(n-k) - \frac{5}{4}\sqrt{n}-\sqrt{n}
     \ge \left(\frac{1}{6}-\frac{\varepsilon}{2} \right)n.
     \end{align*} 
     Similarly, we can find  $u_3,u_5,\ldots ,u_{p-t}\in V_1\setminus L$ and 
     $u_4,u_6,\ldots ,u_{p-t-1} \in V_2\setminus L$
     such that $u_iu_{i+1}\in E(G)$ for each $1\le i\le p-t-1$. Finally, we show that we can choose a vertex $u_{p-t+1}\in  V_2$ such that 
     $u_{p-t+1}\in N(u_{p-t}) \cap N(u_{p-t+2})$. Note that   
     $d_{V_2}(u_{p-t+2})\geq \delta (H)\ge  (\frac{1}{6}-\varepsilon_1)n $ and $d_{V_2}(u_{p-t}) \ge 
     \frac{n-k}{2} - \frac{5}{4}\sqrt{n}$.   Lemma \ref{lem-graph-size} gives  $|V_2|\le \frac{n-k}{2} + \sqrt{n}$.  
     Then for $n> \frac{81} {\varepsilon^2}$,  
     \begin{align*}
         &|N_{V_2}(u_{p-t+2}) \cap N_{V_2}(u_{p-t}) \setminus\{u_2,u_4,\ldots,u_{p-t-1}\}|\\
         &\ge \left(\frac{n-k}{2}-\frac{5}{4}\sqrt{n} \right) 
         +\left(\frac{1}{6}-\varepsilon_1\right)n-\left(\frac{n-k}{2}+\sqrt{n}\right)- \left(\frac{1}{6}-2\varepsilon_1 \right)n\\
         &=\varepsilon_1 n-\frac{9}{4}\sqrt{n}
         >0.
     \end{align*}   
Therefore, we can choose a vertex $u_{p-t+1}$ which joins both $u_{p-t+2}$ and $u_{p-t}$. 
     In conclusion,  
     we can find an odd cycle
     $v_1v_2\cdots v_tu_{p-t+1}\cdots u_2v_1$. It implies that, $G$ contains all odd cycle $C_{\ell}$ for $\ell\in[11,(\frac{1}{3}-\varepsilon)n]$. Combining with \Cref{cor-conse-odd-cycles}, we complete the proof of Claim \ref{clm-cycle-length}.
\end{proof}

By \Cref{clm-cycle-length}, we know that  $G$ contains all odd cycles $C_{\ell}$ for $\ell\in[5,(\frac{1}{3}-\varepsilon)n]$ when $n\geq \max\{1683,\frac{81}{\varepsilon^2}\}$. 
In conclusion, for any $0<\varepsilon\leq \frac{1}{3}$, we just need $n> \max\{n_1,n_2,n_3,n_4,\frac{81}{\varepsilon^2},1683\}$, then $G$ contains all cycles $C_{\ell}$ for $\ell\in[4,(\frac{1}{3}-\varepsilon)n]$. This completes the proof of \Cref{thm-non-bi-cycles}.

\end{document}